\documentclass[11pt,leqno,oneside]{amsart}
\usepackage{amssymb,mathrsfs}
\usepackage{tikz}
\usepackage[normalem]{ulem}
\usepackage{upgreek}

\usepackage{color} 
      \definecolor{mydefi}{cmyk}{1,0,0,.5}
      \definecolor{myred}{rgb}{.7,.1,.1}
      \definecolor{myblue}{rgb}{.1,.1,.6}
      \definecolor{mygreen}{rgb}{.1,.6,.1}

\usepackage[urlbordercolor={1 1 1}, pdfborder={0 0 0}, bookmarks=true,
  colorlinks=true, linkcolor=myblue, citecolor=myblue,
  urlcolor=myblue, hyperfootnotes=false]{hyperref}

\usepackage[shortalphabetic,abbrev]{amsrefs} 

\addtolength{\footskip}{2\baselineskip} 

\newcommand{\N}{\mathbb{N}} 
\newcommand{\Z}{\mathbb{Z}} 
\newcommand{\C}{\mathbb{C}} 
\newcommand{\End}{\operatorname{End}} 
\newcommand{\Hom}{\operatorname{Hom}} 
\newcommand{\GL}{\mathrm{GL}} 
\newcommand{\B}{\mathfrak{B}} 
\newcommand{\Sym}{\mathfrak{S}} 
\newcommand{\sgn}{\operatorname{sgn}} 
\newcommand{\T}{\mathsf{T}} 
\newcommand{\U}{\mathsf{U}} 
\newcommand{\TS}{\mathsf{S}} 
\newcommand{\ov}{\overline} 
\newcommand{\X}{\mathcal{X}} 
\newcommand{\Orth}{\mathcal{O}}
\newcommand{\A}{\mathcal{A}} 
\newcommand{\fraka}{\mathfrak{a}} 
\newcommand{\frakb}{\mathfrak{b}} 
\newcommand{\frakc}{\varphi} 
\newcommand{\yy}{\mathsf{y}} 
\newcommand{\idem}{\varepsilon} 
\newcommand{\Tab}{\operatorname{Tab}} 
\newcommand{\BG}{\mathbf{B}} 
\newcommand{\bb}{\varnothing} 
\newcommand{\res}{\operatorname{res}} 
\newcommand{\Irr}{\operatorname{Irr}} 
\newcommand{\Wt}{\operatorname{Wt}} 
\newcommand{\trace}{\operatorname{trace}} 
\newcommand{\type}{\operatorname{type}} 
\newcommand{\gen}[1]{\langle #1 \rangle} 
\newcommand{\parm}{\updelta} 
\newcommand{\compose}{\circ} 

\newcommand{\qand}{\quad\hbox{and}\quad}
\newcommand{\F}{\mathbf{F}} 
\newcommand{\G}{\mathbf{G}} 
\newcommand{\Res}{\operatorname{Res}}
\newcommand{\Ind}{\operatorname{Ind}}
\newcommand{\into}{\hookrightarrow} 

\newcommand{\transpose}[1]{{#1}^{\emph{\textsf{T}}}}

    \newcommand{\demph}[1]{\textcolor{black}{\emph{#1}}}

\newcommand{\Part}[1]{
 \foreach \x [count=\s from 1] in {#1}{
 	{\ifnum\s=1
		\draw (0,\s-1)--(\x,\s-1); 
		\fi}
   \draw (0,\s) to (\x,\s);
   \foreach \y in {0, ..., \x} {\draw (\y,\s)--(\y,\s-1);}
 }}

 \newcommand{\fPart}[2]{
 \foreach \x [count=\s from 1] in {#1}{
  \filldraw[#2] (0,\s) -- (\x,\s)--(\x,\s-1)--(0,\s-1);
 }
 \foreach \x [count=\s from 1] in {#1}{
 	{\ifnum\s=1
		\draw (0,\s-1)--(\x,\s-1); 
		\fi}
   \draw (0,\s) to (\x,\s);
   \foreach \y in {0, ..., \x} {\draw (\y,\s)--(\y,\s-1);}
 }}

\def\UNIT{.25}

\newcommand{\tPART}[1]{ 
\begin{tikzpicture}[xscale=.8*\UNIT, yscale=-.8*\UNIT] 
	\Part{#1}
\end{tikzpicture}
}
\newcommand{\ttPART}[1]{ 
\begin{tikzpicture}[xscale=.5*\UNIT, yscale=-.5*\UNIT] 
	\Part{#1}
\end{tikzpicture}
}


\swapnumbers 
\newtheorem{thm}{Theorem}[section]
\newtheorem*{thm*}{Theorem}
\newtheorem{lem}[thm]{Lemma}
\newtheorem*{lem*}{Lemma}
\newtheorem{prop}[thm]{Proposition}
\newtheorem*{prop*}{Proposition}
\newtheorem{cor}[thm]{Corollary}
\newtheorem*{cor*}{Corollary}

\newtheorem*{conj*}{Conjecture}

\theoremstyle{definition}
\newtheorem{defn}[thm]{Definition}
\newtheorem*{defn*}{Definition}
\newtheorem{example}[thm]{Example}
\newtheorem*{example*}{Example}
\newtheorem{examples}[thm]{Examples}
\newtheorem*{examples*}{Examples}
\newtheorem{alg}[thm]{Algorithm}
\newtheorem*{alg*}{Algorithm}
\newtheorem{rmk}[thm]{Remark}
\newtheorem*{rmk*}{Remark}
\newtheorem{rmks}[thm]{Remarks}
\newtheorem*{rmks*}{Remarks}

\numberwithin{equation}{section} 
\parskip = 2pt
\allowdisplaybreaks
\renewcommand{\theenumi}{\alph{enumi}} 

\pagestyle{plain} 

\title[Canonical idempotents of multiplicity-free families]{Canonical
  idempotents of multiplicity-free families of algebras}



\author{Stephen Doty}
\author{Aaron Lauve}
\author{George H.~Seelinger}
\address{Department of Mathematics and Statistics, 
	Loyola University Chicago,
	Chicago, IL 60660 USA}%
\email{doty@math.luc.edu}%
\email{lauve@math.luc.edu}
\address{Department of Mathematics,
	University of Virginia,
	P.O. Box 400137,
	Charlottesville, VA 22904 USA} 
\email{ghs9ae@virginia.edu}

\date{\today} 

\begin{document}

\begin{abstract}\noindent
Any multiplicity-free family of finite dimensional algebras has a
\emph{canonical} complete set of pairwise orthogonal primitive
idempotents in each level.  We give various methods to compute these
idempotents. In the case of symmetric group algebras over a field of
characteristic zero, the set of canonical idempotents is precisely the
set of seminormal idempotents constructed by Young. As an example, we
calculate the canonical idempotents for semisimple Brauer algebras.
\end{abstract}

\maketitle

\section*{Introduction}\noindent
Given a finite dimensional unital associative algebra $\A$ over a
field $\Bbbk$, a fundamental problem is to find a \emph{partition of
  unity}, i.e., a complete set of pairwise orthogonal primitive
idempotents, in $\A$.  (This means finding a set $\{e_i\}_{i\in I}$ of
elements satisfying $\sum_i e_i =1$ and $e_ie_j = \delta_{ij} e_j$ for
$i, j\in I$, with $|I|$ maximal.)
The corresponding problem for the
center $Z(\A)$ is equally fundamental; in that case the partition is
unique. We study these two closely related problems under the
assumption that $\A$ is split semisimple; i.e., $\A$ is isomorphic to
a direct sum of matrix algebras over $\Bbbk$.

Our main results are for the special case where $\A = \A_n$ fits into
a multiplicity-free family $\{\A_n \mid n \ge 0 \}$ (see Definition
\ref{def:mfa}), which allows for induction on $n$. Group algebras of
symmetric groups serve as the primary motivating example. For a
multiplicity-free family $\{\A_n\}$, we find that:
\begin{enumerate}\renewcommand{\theenumi}{\arabic{enumi}} 
\item \label{item:canonical} There is a canonical partition of unity
  of $\A_n$ for all $n$ (see Proposition \ref{prop:idem-gen}). This
  fact is implicit in \cites{OV,VO} and explicit in
  \cite{Goodman-Graber:JM}; we feel it deserves to be more widely known.

\item \label{item:equivalent} The two problems (calculating the
  canonical partitions of unity in $\A_n$ and in $Z(\A_n)$ for all
  $n$) are equivalent.

\item Both problems can be solved recursively by ``Lagrange
  interpolation'' methods, in terms of the eigenvalues of a
  Jucys--Murphy sequence on a Gelfand--Tsetlin basis of the
  irreducible representations.

\item \label{item:P-lambda-mu} Both problems reduce to the computation
  of certain polynomials in the $n$th Jucys--Murphy element, for all
  $n$. The polynomials depend only on a pair $(\lambda, \mu)$ of
  isomorphism classes of irreducible representations, one for $\A_n$
  and the other for $\A_{n-1}$.
\end{enumerate}
Many of the results of the paper are straightforward extensions of
known results scattered through the literature.  Our approach is based
on the insights of Vershik and Okounkov \cites{OV, VO} for symmetric
group algebras; see also \cites{Dipper-James, Goodman-delaHarpe-Jones,
  Ram-Wenzl, Halverson-Ram-chars, Leduc-Ram, Ram-PLMS,
  Ram-second-chars, Diaconis-Greene, OP, Garsia, Mathas,
  CST-symmetric-gps, Goodman-Graber:JM} for related work. Probably
\cite{Goodman-Graber:JM} overlaps the most with this paper.

The general theory of Lagrange interpolation methods for
multiplicity-free families is presented in Sections
\ref{sec:mfa}--\ref{sec:jm}; this theory extends known results from
symmetric group algebras in characteristic zero to arbitrary
multiplicity-free families.  Examples of multiplicity-free families
abound in the literature (e.g.\ partition algebras, Temperley--Lieb
algebras, various families of Weyl groups and their associated Hecke
algebras, Birman--Murakami--Wenzl algebras) so these results should
have wide applicability. For many of these families, suitable
candidates for Jucys--Murphy sequences (in our sense) have been found,
which should bring all of items
\eqref{item:canonical}--\eqref{item:P-lambda-mu} above to bear on
their study. Due to space constraints, we treat only two illustrative
examples here: in Sections \ref{sec:symm-gp} and \ref{sec:Brauer} we
apply our methods to study the symmetric group algebras and Brauer
algebras, respectively.  Although we have chosen to avoid the language
of cellular algebras, in order to keep the exposition as elementary as
possible, readers interested in applying these results to other
diagram algebras would be well-advised to utilize the axiomatic
framework of \cite{Goodman-Graber:JM} and the related results of
\cite{Goodman-Graber:JBC}.

Appendix \ref{sec:pci} outlines an alternative method of computing the
partition of unity of $Z(\A)$ in characteristic zero, based on trace
characters instead of interpolation. This is valid without any
assumption that the split semisimple algebra $\A$ fits into a
multiplicity-free family; however, it requires inverting a possibly
large matrix.

\subsection*{Acknowledgments}\noindent
This project started as an undergraduate research project by the third
author, jointly mentored by the first two. The authors are grateful to
the Mulcahy Scholars Program of Loyola University Chicago for support.
Our work was greatly influenced by a seminar talk by Tony Giaquinto
\cite{Giaquinto}. We would also like to thank Stuart Martin and Peter
Tingley for useful conversations and advice, and the referee for
suggesting substantial improvements.

\section{Multiplicity-free families of algebras}\label{sec:mfa}
\noindent
Let $\Bbbk$ be a field and $\A$ an algebra over $\Bbbk$. All the
algebras considered in this paper are assumed to be finite
dimensional, semisimple, associative, unital, and split over
$\Bbbk$. Write $\Irr(\A)$ for the set of isomorphism classes of
irreducible left\footnote{\,We could just as well work with right
  modules, and will do so in Sections \ref{sec:symm-gp},
  \ref{sec:Brauer}.} $\A$-modules and $V^\lambda$ for a representative
of the class $\lambda\in\Irr(\A)$. That is, $[V^\lambda] = \lambda$.

The general Wedderburn--Artin theorem expresses $\A$ as a finite
direct sum of matrix algebras over division rings; our assumption that
$\A$ is split over $\Bbbk$ means that each of the division rings is
$\Bbbk$ (this is automatic if $\Bbbk$ is algebraically closed), so
\begin{equation}\label{eq:ss-dec}
  \A \, = \! \bigoplus_{\lambda \in \Irr(\A)} \!\idem(\lambda) \A \ 
  	\cong \bigoplus_{\lambda \in \Irr(\A)} \! \End_\Bbbk(V^\lambda).
\end{equation}
In the isomorphism \eqref{eq:ss-dec}, the central idempotent
$\idem(\lambda)\in\A$ acts as the identity in $\End_\Bbbk(V^\lambda)$
and zero in the other components, so $\{ \idem(\lambda) \mid \lambda
\in \Irr(\A)\}$ is the (unique) partition of unity of the center
$Z(\A)$.

The main objective of this paper is to study the situation where $\A =
\A_n$ fits into an infinite family of algebras satisfying the
following properties.

\begin{defn}\label{def:mfa}
  A family $\{\A_n \mid n \ge 0 \}$ of finite dimensional split
  semisimple algebras over a field $\Bbbk$ is a
  \demph{multiplicity-free family of algebras} if the following axioms
  hold:
  
  (a) (Triviality) $\A_0 \cong \Bbbk$.

  (b) (Embedding) For each $n$, there is a unity preserving algebra
  embedding $\A_n \into \A_{n+1}$.

  (c) (Branching) The restriction to $\A_{n-1}$ of an irreducible
  $\A_n$-module $V$ is isomorphic to a direct sum of pairwise
  non-isomorphic irreducible $\A_{n-1}$-modules.
\end{defn}

Whenever (c) above holds, we say that restriction from $\A_n$ to
$\A_{n-1}$ is multiplicity-free.  The following general criterion
characterizes this property.

\begin{prop}[\cite{VO}*{Prop.~1.4}]
  Restriction from $\A_n$ to $\A_{n-1}$ is multiplicity-free if and
  only if the centralizer algebra 
  \[
  Z(\A_{n-1}, \A_n) = \{ x \in \A_n \mid xy=yx, \text{ for all } y \in
  \A_{n-1} \}
  \] 
  is commutative.
\end{prop}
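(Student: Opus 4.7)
The plan is to compute the centralizer algebra explicitly via the Wedderburn decomposition and Schur's lemma, reducing commutativity to a condition on multiplicities.

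First, I would use the Wedderburn decomposition \eqref{eq:ss-dec}, namely $\A_n \cong \bigoplus_{\lambda \in \Irr(\A_n)} \End_\Bbbk(V^\lambda)$, to parametrize elements of $\A_n$ as tuples $(x_\lambda)_\lambda$. Via the embedding $\A_{n-1} \into \A_n$, each $V^\lambda$ becomes an $\A_{n-1}$-module by restriction. An element $x = (x_\lambda)_\lambda$ commutes with every $y \in \A_{n-1}$ if and only if each component $x_\lambda$ commutes with the action of $\A_{n-1}$ on $V^\lambda$, i.e., $x_\lambda \in \End_{\A_{n-1}}(V^\lambda)$. This identifies
\[
  Z(\A_{n-1}, \A_n) \;\cong\; \bigoplus_{\lambda \in \Irr(\A_n)} \End_{\A_{n-1}}(V^\lambda)
\]
as algebras.

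Next, I would decompose the restriction $V^\lambda{\downarrow}_{\A_{n-1}} \cong \bigoplus_{\mu \in \Irr(\A_{n-1})} (V^\mu)^{\oplus m_{\lambda\mu}}$ for some multiplicities $m_{\lambda\mu} \ge 0$. Because $\A_{n-1}$ is split semisimple, Schur's lemma gives $\End_{\A_{n-1}}(V^\mu) = \Bbbk$, so
\[
  \End_{\A_{n-1}}(V^\lambda) \;\cong\; \bigoplus_{\mu \in \Irr(\A_{n-1})} M_{m_{\lambda\mu}}(\Bbbk).
\]
Combining the two displays yields $Z(\A_{n-1}, \A_n) \cong \bigoplus_{\lambda,\mu} M_{m_{\lambda\mu}}(\Bbbk)$.

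To finish, I would observe that a direct sum of full matrix algebras over $\Bbbk$ is commutative if and only if each summand is $1$-dimensional, i.e., each $m_{\lambda\mu} \in \{0,1\}$. That is precisely the statement that the restriction of each irreducible $\A_n$-module to $\A_{n-1}$ is a sum of pairwise non-isomorphic irreducibles, so $Z(\A_{n-1},\A_n)$ is commutative iff restriction is multiplicity-free. There is no real obstacle here; the only point requiring care is to invoke the split semisimple hypothesis so that Schur's lemma produces $\Bbbk$ rather than an arbitrary division algebra.
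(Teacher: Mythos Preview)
Your argument is correct and complete; the Wedderburn decomposition plus Schur's lemma computation of the centralizer as $\bigoplus_{\lambda,\mu} M_{m_{\lambda\mu}}(\Bbbk)$ is exactly the standard route, and your care with the split hypothesis is well placed. Note that the paper itself does not supply a proof of this proposition---it simply quotes the result from \cite{VO}*{Prop.~1.4}---so there is nothing to compare against here beyond observing that your argument is essentially the one found in that reference.
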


To ease notation, whenever we have a multiplicity-free family we write
$\Irr(n)$ short for $\Irr(\A_n)$.  Extending \cites{OV,VO}, we define
the \demph{branching graph} $\BG$ (or \demph{Bratteli diagram}) of the
given family to be the directed graph with vertices and edges as
follows:
\begin{itemize}
\item the vertices are the isomorphism classes $\bigsqcup_{n \ge 0} \Irr(n)$;
\item there is an edge $\mu \to \lambda$ from the vertex $\mu$ to the
  vertex $\lambda$ if and only if $V^\mu$ is isomorphic to a direct
  summand of the restriction of $V^\lambda$.
\end{itemize}
Given $\lambda \in \Irr(n)$, let $\Tab(\lambda)$ denote the set of
\demph{paths} in the branching graph starting from the unique element
$\bb\in\Irr(0)$ and terminating at $\lambda$.\footnote{\,The set
  $\Tab(\lambda)$ is analogous to the set of standard tableaux of
  shape $\lambda$ in the representation theory of symmetric groups.}
Concretely, an element of $\Tab(\lambda)$ has the form
\[
\T = (\lambda_0 \to \lambda_1 \to \lambda_2 \to \cdots \to
\lambda_{n-1} \to \lambda_n),
\]
where $\lambda_0 = \bb$ and $\lambda_n = \lambda$.
Set $\Tab(n) = \bigsqcup_{\lambda \in \Irr(n)} \Tab(\lambda)$.  We say
that $\T \in \Tab(n)$ is a path of \demph{length} $n$ (a path on $n+1$
vertices). We sometimes write $\T \mapsto \lambda$ to indicate that
$\T \in \Tab(\lambda)$. We also write $\overline\T$ for the path in
$\Tab(n{-}1)$ obtained from $\T$ by deleting its last edge,
$\lambda_{n-1}\to\lambda_n$.
 
We now describe how to use branching to produce bases of irreducible
modules.  Let $V$ be a given irreducible $\A_n$-module. By the
branching rule \ref{def:mfa}(c) and Schur's Lemma, the decomposition
\begin{equation}
  \res_{\A_{n-1}} V = \textstyle \bigoplus_{[W] \to [V]} W
\end{equation}
is canonical. 
Decomposing each $W$ on the right hand side upon restriction to
$\A_{n-2}$ and continuing inductively all the way down to $\A_0 \cong
\Bbbk$, we obtain a canonical decomposition
\begin{equation}\label{eq:res-A_1}
  \res_{\A_0} V = \textstyle \bigoplus_{\T} V_\T
\end{equation}
into irreducible $\A_0$-modules, which are the 1-dimensional subspaces
$V_\T$, where the index $\T$ runs over the set of $\T \in \Tab(n)$
terminating in $[V]$.  Note that the $\A_k$-submodule of $V$ generated
by $V_\T$ is isomorphic to $V^{\lambda_k} = \idem(\lambda_k) \cdots
\idem(\lambda_n)V$, where $\lambda_k$ is the $k$th vertex in the path
$\T$, for each $k = 0, 1, \dots, n-1, n$.  Choosing a nonzero vector
$v_\T \in V_\T$ for each $\T$ in $\Tab(n)$, we get a basis
\[
  \{ v_\T \mid \T \mapsto [V] \} 
\]
of each $V$, called the \demph{Gelfand--Tsetlin basis}; this idea goes
back to \cites{GT1,GT2}.  We note that the choice of $v_\T$ is
uniquely determined only up to a scalar multiple.

In what follows, an important role is played by the
\demph{Gelfand--Tsetlin} subalgebra $\X_n$ ($n\geq1$). Following
\cite{VO}, this is the subalgebra of $\A_n$ generated by the centers
\[
  Z(\A_1), Z(\A_2), \dots, Z(\A_n).
\]
It is easy to see that $\X_n$ is a commutative subalgebra of $\A_n$,
for all $n$.  Clearly $\X_n \subseteq \X_{n+1}$, for all $n$.  

\begin{defn}\label{def:idem-T}
  To each path $\T : \bb=\lambda_0 \to \lambda_1 \to \cdots \to
  \lambda_n$ of length $n$ in the branching graph, we associate a
  unique element $\idem_\T := \idem(\lambda_1) \idem(\lambda_2) \cdots
  \idem(\lambda_n)$ of the Gelfand--Tsetlin subalgebra $\X_n$.
\end{defn}

\begin{rmk}\label{rmk:idem-recursive} 
Equivalently, $\idem_\T$ can be defined recursively by:
\begin{equation*}
   \idem_\T = 
   \begin{cases}
      \idem_{\ov{\T}} \cdot \idem(\lambda_n) & \text{ if } n>0, \\
      1  & \text{ if } n = 0
   \end{cases}
\end{equation*}
in terms of the notation $\ov{\T}$ introduced above.
\end{rmk}

Given an irreducible module $V\cong V^\lambda$ for $\A_n$ and any $\T
\mapsto \lambda$, the element $\idem_\T\in\A_n$ is the projection
mapping $V$ onto $V_\T$. In \cite{VO}*{Prop.~1.1}, Vershik and
Okounkov use these canonical projections to prove the following
result.

\begin{prop}\label{prop:max-comm}
The Gelfand--Tsetlin algebra $\X_n$ is the algebra of all elements of
$\A_n$ that act diagonally on the Gelfand--Tsetlin basis $\{ v_\T \}$
for each irreducible $\A_n$-module $V$.  In particular, the algebra
$\X_n$ is a maximal commutative subalgebra of $\A_n$.
\end{prop}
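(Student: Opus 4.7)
The plan is to prove the characterization of $\X_n$ first (namely, that it consists of exactly the elements acting diagonally on every Gelfand--Tsetlin basis), since maximal commutativity will drop out as a short corollary. I would establish two inclusions.

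For the forward inclusion, I would use that $\X_n$ is generated by $Z(\A_1), \dots, Z(\A_n)$, so it suffices to show that any $z \in Z(\A_k)$ acts diagonally. Fix an irreducible $\A_n$-module $V$ and a Gelfand--Tsetlin basis vector $v_\T$, where $\T: \bb = \lambda_0 \to \cdots \to \lambda_n$. The discussion preceding \eqref{eq:res-A_1} asserts that the $\A_k$-submodule of $V$ generated by $v_\T$ is isomorphic to $V^{\lambda_k}$; since $z$ acts by a scalar on this irreducible, it scales $v_\T$, as required.

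For the reverse inclusion, I would exploit the idempotents $\idem_\T \in \X_n$ of Definition \ref{def:idem-T}, together with the already-noted fact that, for $\T \mapsto \lambda$, the element $\idem_\T$ realizes the projection $V^\lambda \twoheadrightarrow V_\T$ and acts as zero on $V^\mu$ for $\mu \neq \lambda$. In particular, $\idem_\T \cdot v_{\T'} = \delta_{\T, \T'}\, v_\T$ on every Gelfand--Tsetlin basis vector $v_{\T'}$. Consequently, if $x \in \A_n$ scales each $v_\T$ by some $c_\T$, then $x$ and $\sum_\T c_\T \idem_\T$ act identically on every irreducible $\A_n$-module, so by the Wedderburn decomposition \eqref{eq:ss-dec} they coincide as elements of $\A_n$, placing $x$ in $\X_n$.

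Maximal commutativity is then immediate: if $x \in \A_n$ commutes with $\X_n$, it in particular commutes with each $\idem_{\T'}$, and the computation $x v_{\T'} = x \idem_{\T'} v_{\T'} = \idem_{\T'} x v_{\T'}$ shows that $x v_{\T'}$ lies in the one-dimensional space $V_{\T'}$. Hence $x$ acts diagonally on every Gelfand--Tsetlin basis, so $x \in \X_n$ by the characterization just established. I do not anticipate any serious obstacle; the only delicate point is the global comparison of $x$ with $\sum_\T c_\T \idem_\T$ as elements of $\A_n$ (rather than merely as operators on a single irreducible), which is precisely what \eqref{eq:ss-dec} supplies.
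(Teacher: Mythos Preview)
Your proof is correct and uses the same key ingredient as the paper, namely that the $\idem_\T \in \X_n$ act as the diagonal rank-one projections onto the lines $V_\T$. The logical structure differs slightly, however. You prove the two inclusions $\X_n \subseteq \{\text{diagonal operators}\}$ and $\{\text{diagonal operators}\} \subseteq \X_n$ separately, and then deduce maximal commutativity as a corollary. The paper instead argues only the second of these: the $\idem_\T$ span the subalgebra of diagonal operators (with respect to the Gelfand--Tsetlin basis in each Wedderburn block), and this subalgebra is visibly a \emph{maximal} commutative subalgebra of $\A_n$; since $\X_n$ is commutative and contains it, equality is forced. In particular, the paper never needs your direct verification that each $Z(\A_k)$ scales the $v_\T$---that inclusion comes for free from the commutativity of $\X_n$ and maximality of the diagonal subalgebra. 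Your route is perhaps more transparent, while the paper's is a line or two shorter.
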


\begin{proof}
  Suppose that $\T \mapsto \lambda\in \Irr(n)$. Since $\idem_\T$
  projects $V\cong V^\lambda$ onto its one-dimensional subspace
  $V_\T$, it follows that $\idem_\T$ sends $v_\T$ to itself. Also,
  $\idem_\T$ acts as zero on all $v_\TS$ such that $\TS \ne \T$. So
  with respect to the Gelfand--Tsetlin basis $\{ v_\T \}$ for $V$, the
  operators $\idem_\T$ are diagonal matrices. In view of
  \eqref{eq:ss-dec}, the algebra generated by
  $\{\idem_\T \mid \T \in \Tab(n)\}$ is a maximal commutative
  subalgebra of $\A_n$. Since $\X_n$ is commutative and contains this
  subalgebra, we have equality, which completes the proof.
\end{proof}

The following result did not explicitly appear in \cite{VO}, although
it is implicit in their setup. It provides an explicit and canonical
partition of unity in $\A_n$ for each $n$, in terms of the primitive
central idempotents.

\begin{prop}\label{prop:idem-gen}
  The set $\{ \idem_\T \mid \T \in \Tab(n) \}$ is a family of pairwise
  orthogonal primitive idempotents in $\A_n$ that sums to $1$ (the
  unit in $\A_n$).  It is also a $\Bbbk$-basis for the
  Gelfand--Tsetlin subalgebra $\X_n$.
\end{prop}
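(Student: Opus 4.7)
The plan is to reduce every claim to a concrete statement about the action of $\idem_\T$ on Gelfand--Tsetlin bases, using the fact (observed just before the proposition) that $\idem_\T$ acts on each irreducible $\A_n$-module $V\cong V^\lambda$ with $\T\mapsto\lambda$ as the projection onto the one-dimensional subspace $V_\T$, and as zero on $V^\mu$ for $\mu\ne\lambda$. Via the Wedderburn decomposition \eqref{eq:ss-dec}, an element of $\A_n$ is determined by its action on every $V^\lambda$, so each assertion becomes a check about compositions or sums of such rank-$\le 1$ diagonal operators.

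Orthogonality and idempotency then fall out at once: $\idem_\T\,\idem_\TS$ acts on each $V^\lambda$ as the composition of two projections onto Gelfand--Tsetlin lines, giving $\idem_\T$ when $\T=\TS$ and zero otherwise (either because $\T,\TS$ land in different blocks, or because they project onto distinct lines within the same block). Completeness $\sum_\T \idem_\T = 1$ follows similarly, since on each $V^\lambda$ the sum restricts to the sum of the projections onto the $V_\T$ with $\T\mapsto\lambda$, which is the identity by the Gelfand--Tsetlin decomposition $V^\lambda = \bigoplus_{\T\mapsto\lambda} V_\T$.

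For primitivity, I would argue that any orthogonal decomposition $\idem_\T = e + f$ into nonzero idempotents in $\A_n$ would force $e,f$ to project onto complementary nonzero subspaces of $V_\T$, which is impossible since $V_\T$ is one-dimensional (while both must vanish on the other blocks). For the basis claim, pairwise orthogonality of nonzero idempotents gives linear independence, and a dimension count closes the argument: by Proposition \ref{prop:max-comm}, $\X_n$ is the algebra of operators that are diagonal in every Gelfand--Tsetlin basis, so $\dim \X_n = \sum_\lambda \dim V^\lambda = |\Tab(n)|$, matching the size of our set.

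The only substantive input is the identification of $\idem_\T$ as the canonical projection onto $V_\T$, and this is the main (and essentially only) obstacle; it has already been handled via the recursive form in Remark \ref{rmk:idem-recursive} together with the multiplicity-free branching rule of Definition \ref{def:mfa}(c), which ensures that each successive factor $\idem(\lambda_k)$ isolates a unique irreducible $\A_k$-summand.
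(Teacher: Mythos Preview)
Your proof is correct and close in spirit to the paper's, but differs in two places worth noting. For idempotency and orthogonality, the paper argues purely algebraically from the product form $\idem_\T = \idem(\lambda_1)\cdots\idem(\lambda_n)$: the factors commute (each $\idem(\lambda_k)$ being central in $\A_k$), so the product is idempotent, and if $\T\ne\T'$ then at some level $k$ the orthogonality $\idem(\lambda_k)\idem(\lambda'_k)=0$ of distinct primitive central idempotents in $\A_k$ kills the product. You instead invoke the projection description and Wedderburn to compare actions on each $V^\lambda$; this is equally valid and arguably more uniform, though it leans on a fact (the projection property) that the paper only establishes in the course of proving Proposition~\ref{prop:max-comm}. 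For primitivity, the paper simply counts: there are $|\Tab(n)|=\sum_\lambda \dim V^\lambda$ pairwise orthogonal idempotents summing to $1$, which is already the maximum possible, so each must be primitive. Your direct argument via the one-dimensionality of $V_\T$ also works (the cleanest way to finish it is to note that orthogonal idempotents $e,f$ with $e+f=\idem_\T$ have $\operatorname{rank}(e|_{V^\lambda})+\operatorname{rank}(f|_{V^\lambda})=1$, forcing one to vanish on $V^\lambda$ and hence globally). The completeness and basis arguments are essentially identical to the paper's.
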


\begin{proof}
It is clear from Definition \ref{def:idem-T} that $\idem_\T =
\idem(\lambda_1) \cdots \idem(\lambda_n)$ is idempotent for any $\T$,
since its factors commute. The commutativity of the factors is also
used to check that $\idem_\T \idem_{\T'} = 0$ if either
\[
  \T \mapsto \lambda \ \text{and} \ \T' \mapsto \lambda'
  \quad\text{with}\quad \lambda \neq \lambda'
\]
or
\[
  \T \mapsto \lambda \ \text{and} \ \T' \mapsto \lambda
  \quad\text{with}\quad \T \ne \T'.
\]
So the idempotents are pairwise orthogonal. 

For any $\T \mapsto \lambda$, $\idem_\T$ acts as one on $V^\lambda_\T$
and zero on all $V^\lambda_\TS$, for $\TS \ne \T$. Since
$V^\lambda = \bigoplus_{\T \mapsto \lambda} V^\lambda_\T$, it follows
that $\sum_{\T \mapsto \lambda} \idem_\T$ and $\idem(\lambda)$ both
act as one on $V^\lambda$. Furthermore, both act as zero on $V^\mu$,
for each $\lambda \ne \mu\in\Irr(n)$.  This shows that $\sum_{\T
  \mapsto \lambda} \idem_\T = \idem(\lambda)$. It follows that
$\sum_{\T \in \Tab(n)} \idem_\T = \sum_{\lambda \in \Irr(n))}
\idem(\lambda) = 1$.

Finally, the various $\idem_\T$ are primitive
since we have precisely the right number, namely
$\sum_{\lambda \in \Irr(n)} \dim_\Bbbk V^\lambda = |\Tab(n)|$.

The last claim in the proposition follows from the proof of
Proposition \ref{prop:max-comm}, since the $\idem_\T$ are linearly
independent and $\dim_\Bbbk \X_n = \sum_{\lambda \in \Irr(n)}
\dim_\Bbbk V^\lambda$.
\end{proof}


\begin{cor}\label{cor:idem-characterization}
  The canonical idempotents $\{ \idem_\T \mid \T \in \Tab(n) \}$
  satisfy the following properties:
  \begin{enumerate}
  \item $\sum_{\T \mapsto \lambda} \idem_\T = \idem(\lambda)$, for all
    $\lambda \in \Irr(n)$.
  \item $\idem_{\ov{\T}} \idem_\T = \idem_\T$, for all $\T \mapsto
    \lambda$, $\lambda \in \Irr(n)$. 
  \end{enumerate}
  Furthermore, $\{ \idem_\T \mid \T \in \Tab(n), n \ge 0 \}$ is the
  unique set of pairwise orthogonal idempotents satisfying these two
  properties.
\end{cor}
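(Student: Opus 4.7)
The plan is to verify the two properties quickly from prior results, then prove uniqueness by induction on $n$.

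Property (a) was already established inside the proof of Proposition \ref{prop:idem-gen}, so no new argument is needed. For property (b), the recursive formula of Remark \ref{rmk:idem-recursive} gives $\idem_\T = \idem_{\ov\T}\,\idem(\lambda_n)$, and since $\idem_{\ov\T}$ is itself idempotent (and commutes with $\idem(\lambda_n)$, both lying in the commutative algebra $\X_n$), we obtain $\idem_{\ov\T}\idem_\T = \idem_{\ov\T}^2\,\idem(\lambda_n) = \idem_\T$ at once.

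For uniqueness, suppose $\{f_\T\}$ is any family of pairwise orthogonal idempotents satisfying (a) and (b), and induct on $n$. The base case $n=0$ is forced since $\Tab(0) = \{\bb\}$ and (a) gives $f_\bb = \idem(\bb) = 1 = \idem_\bb$. For the inductive step, fix $\T \in \Tab(n)$ with $\ov\T = \TS$ and $\T \mapsto \lambda$, and multiply the $f$-version of identity (a) on the left by the canonical idempotent $\idem_\TS$. Property (b) applied to each $f_{\T'}$, combined with the inductive hypothesis $\idem_{\ov{\T'}} = f_{\ov{\T'}}$, allows us to rewrite $\idem_\TS f_{\T'} = \idem_\TS \idem_{\ov{\T'}} f_{\T'}$; this vanishes unless $\ov{\T'} = \TS$ (by pairwise orthogonality at level $n-1$), in which case it equals $f_{\T'}$. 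The multiplicity-free axiom \ref{def:mfa}(c) guarantees there is a unique such $\T'$, namely $\T$ itself, so the left-hand side collapses to $f_\T$. Running the identical computation with $\idem$ in place of $f$ on the right-hand side yields $\idem_\TS\,\idem(\lambda) = \idem_\T$ (using the recursion of Remark \ref{rmk:idem-recursive}). Hence $f_\T = \idem_\T$, completing the induction.

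The only subtlety, and the crux of the argument, is the collapse of each side to a single term: this uses the multiplicity-free branching essentially, since without it property (b) would not suffice to pin down an individual idempotent (there could be several $\T'$ with $\ov{\T'} = \TS$ and $\T' \mapsto \lambda$). All other ingredients are formal manipulations with orthogonality, and I do not foresee any serious obstacle beyond keeping the bookkeeping straight.
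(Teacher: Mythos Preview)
Your proof is correct and follows essentially the same approach as the paper's: induction on $n$, with the inductive step combining property (b), the inductive hypothesis at level $n-1$, orthogonality of the level-$(n-1)$ idempotents, and the observation (from multiplicity-free branching) that a path of type $\lambda$ is uniquely determined by its truncation $\ov{\T}$. The only cosmetic difference is that the paper starts from the recursive formula $\idem_\T = \idem_{\ov\T}\,\idem(\lambda)$ and expands $\idem(\lambda)$ via property (a) for the hypothesized family, whereas you start from property (a) for the hypothesized family and multiply by $\idem_{\ov\T}$; the two computations are mirror images of one another.
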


\begin{proof}
  Property (a) was proved already in the proof of the previous
  proposition. Property (b) follows immediately from the definition of
  $\idem_\T$ and the definition of $\ov{\T}$. 

  Suppose that $\{ g_\T \mid \T \in \Tab(n), n\ge 0 \}$ is another set
  such that for each fixed $n$, the set $\{g_\T \mid \T \in \Tab(n)\}$
  is a set of pairwise orthogonal idempotents in $\A_n$ satisfying
  properties (a) and (b). For the unique path $\bb$ of length $0$, we
  have $g_\bb = \idem_\bb = 1$. Proceeding by induction on $n$,
  suppose that $n>0$ is fixed and assume that $g_\TS = \idem_\TS$ for
  all paths $\TS$ of length strictly less than $n$. Then for
  $\T \in \Tab(n)$ with $\T \mapsto \lambda$, we have
  \begin{align*}
    \idem_\T &= \idem_{\ov{\T}} \idem(\lambda) = g_{\ov{\T}} \sum_{\TS
               \mapsto \lambda} g_\TS = g_{\ov{\T}} \sum_{\TS
               \mapsto \lambda} g_{\ov{\TS}} g_\TS\\
    & = \sum_{\TS \mapsto \lambda} g_{\ov{\T}} g_{\ov{\TS}} g_\TS =
      \sum_{\TS \mapsto \lambda} \delta_{\ov{\T},\ov{\TS}}\,
      g_{\ov{\TS}} g_\TS = g_{\ov{\T}} g_\T = g_\T.
  \end{align*}
  Note that the penultimate equality above is valid because $\T$ is
  the only path of shape $\lambda$ whose restriction of length $n-1$
  is $\ov{\T}$. 
\end{proof}

It is illuminating to introduce a \emph{global Gelfand--Tsetlin basis}
for $\A_n$ at this point. 

Fix a Gelfand--Tsetlin basis $\{v_\T \mid \T \mapsto \lambda \}$ for
each irreducible $V^\lambda$, $\lambda \in \Irr(n)$.  We may identify
the algebra $\End_\Bbbk(V^\lambda)$ with the matrix algebra
$\text{Mat}_{\dim V^\lambda}(\Bbbk)$ by means of the basis.  Let
$\frakc^\lambda_{\TS,\T}$ be the $\Bbbk$-linear endomorphism of
$V^\lambda$ mapping $v_\T$ to $v_\TS$ and all other $v_{\T'}$ to
$0$. The set
\[
  \{ \frakc^\lambda_{\TS,\T} \mid \TS, \T \mapsto \lambda \}
\]
is a basis of $\End_{\Bbbk}(V^\lambda)$; under the identification
$\End_\Bbbk(V^\lambda) \cong \text{Mat}_{\dim V^\lambda}(\Bbbk)$, it
corresponds to the basis of matrix units.  The desired global
Gelfand--Tsetlin basis of $\A_n$ under the isomorphism
\eqref{eq:ss-dec} is the disjoint union
\begin{equation}\label{eq:global-GZ}
  \bigsqcup_{\lambda \in\, \Irr(n)} \, \{ \frakc^{\lambda}_{\TS,\T} \mid \TS, \T
  \mapsto \lambda \}.
\end{equation}
This basis is uniquely determined by the choice of Gelfand--Tsetlin
basis $\{ v_\T \}$ for each $V^\lambda \in \Irr(n)$, but it depends on
those choices. Note that
$\frakc^\lambda_{\TS,\T} \cdot \frakc^\mu_{\TS',\T'} = 0$ for
$\lambda \neq \mu$; this follows from the equality
$\Hom_{\A_n}(V^\lambda,V^\mu) = 0$, which is true by Schur's
Lemma. Hence the basis \eqref{eq:global-GZ} satisfies
\begin{equation}\label{eq:mult-props}
  \frakc^\lambda_{\TS,\T} \cdot \frakc^\mu_{\TS',\T'} = \delta_{\lambda,\mu} \,
  \delta_{\T,\TS'} \, \frakc^\lambda_{\TS,\T'}
\end{equation}
where $\delta$ is the usual Kronecker delta.  In particular, each
$\frakc^\lambda_{\T,\T}$ is an idempotent. We note that
\eqref{eq:mult-props} implies that the basis \eqref{eq:global-GZ} is a
cellular basis in the sense of \cite{Graham-Lehrer}.

The above allows us to model the algebra $\A_n$ isomorphically as the
matrix algebra consisting of all $N \times N$ block diagonal matrices,
where $N = \sum_{\lambda \in \, \Irr(n)} \dim_\Bbbk V^\lambda$, such
that the block indexed by each $\lambda$ is a full matrix algebra of
$d \times d$ matrices over $\Bbbk$, where $d = \dim V^\lambda$. Of
course, since the Gelfand--Tsetlin bases of the irreducible
representations are unique only up to choice of scalars, this model
depends on those choices. However, being products of the unique
central idempotents, the $\idem_\T$ themselves are independent of the
choices.

\begin{cor}\label{cor:endo-img}
  Under the identification $\A_n \cong \bigoplus_{\lambda \in
    \Irr(n)} \End_\Bbbk(V^\lambda)$ of \eqref{eq:ss-dec}, the
  primitive central idempotent $\idem(\lambda)$ corresponding to any
  $\lambda \in \Irr(n)$ satisfies the identity
  \[
  \idem(\lambda) = \sum_{\T \mapsto \lambda} \frakc^\lambda_{\T,\T}.
  \]
  Likewise, for any path $\T \mapsto \lambda$ in $\BG$ we have the
  identity
  \[
   \idem_\T = \frakc^\lambda_{\T,\T} .
  \]
\end{cor}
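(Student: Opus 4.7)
The plan is to prove both identities by unpacking the Wedderburn identification \eqref{eq:ss-dec} and translating the abstract characterizations of $\idem(\lambda)$ and $\idem_\T$ into matrix-unit expressions relative to the fixed Gelfand--Tsetlin basis. Under that identification, every element of $\A_n$ is determined by its action on each irreducible $V^\nu$, so it suffices to check both identities component-wise in $\End_\Bbbk(V^\nu)$.

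For identity (1), I would argue as follows. By the defining property noted just after \eqref{eq:ss-dec}, $\idem(\lambda)$ acts as the identity on $V^\lambda$ and as zero on $V^\nu$ for $\nu\neq\lambda$. The element $\frakc^\lambda_{\T,\T}$ was defined as the endomorphism of $V^\lambda$ sending $v_\T$ to itself and every other Gelfand--Tsetlin basis vector to zero, hence (via \eqref{eq:ss-dec}) it vanishes on every $V^\nu$ with $\nu\neq\lambda$. Summing over $\T\mapsto\lambda$ gives the identity operator on $V^\lambda$ in the Gelfand--Tsetlin basis, which matches $\idem(\lambda)$ component-wise.

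For identity (2), the key input is already available: in the proof of Proposition \ref{prop:max-comm} it was shown that $\idem_\T$ acts on $V^\lambda$ as the projection onto the one-dimensional summand $V_\T = \Bbbk v_\T$. In the Gelfand--Tsetlin basis this projection is precisely $\frakc^\lambda_{\T,\T}$. To pin down an element of $\A_n$ rather than merely an endomorphism of $V^\lambda$, I still need to confirm that $\idem_\T$ vanishes on $V^\nu$ for $\nu\neq\lambda$; this is immediate from the recursive factorization $\idem_\T = \idem_{\ov\T}\,\idem(\lambda)$ of Remark \ref{rmk:idem-recursive} together with the vanishing of $\idem(\lambda)$ on $V^\nu$. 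Combining these observations with identity (1) gives $\idem_\T = \frakc^\lambda_{\T,\T}$ under the isomorphism \eqref{eq:ss-dec}.

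There is no substantive obstacle in this argument; both identities amount to bookkeeping that translates between the intrinsic, choice-free description of the idempotents and their presentation in the (choice-dependent) matrix-unit basis \eqref{eq:global-GZ}. The only point demanding care is to track simultaneously the action on $V^\lambda$ \emph{and} the vanishing on the other summands, so that one identifies an element of $\A_n$ and not merely an operator on a single irreducible.
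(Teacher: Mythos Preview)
Your proposal is correct and follows essentially the same approach as the paper: both arguments verify each identity by checking that the two sides act identically on every irreducible $V^\mu$, using the Wedderburn identification \eqref{eq:ss-dec}. Your write-up is a bit more explicit about why $\idem_\T$ kills $V^\nu$ for $\nu\neq\lambda$ (invoking the factorization $\idem_\T=\idem_{\ov\T}\,\idem(\lambda)$), but this is just a minor elaboration of the paper's one-line justification.
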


\begin{proof}
To prove that $\idem(\lambda) = \sum_{\T \mapsto \lambda} \frakc^\lambda_{\T,\T}$
observe that both sides act as one on $V^\lambda$ and as zero on all other
irreducibles $W\not\cong V^\lambda$. Similarly, the equality $\idem_\T =
\frakc^\lambda_{\T,\T}$ follows from the fact that both sides act the same
on all $V^\mu$ ($\mu \in \Irr(n)$).
\end{proof}

\begin{rmks}\label{rmk:eigenvalues}
  (a) Let $\X_n$ be the maximal commutative subalgebra of $\A_n$
  defined above. By Proposition \ref{prop:idem-gen}, it is spanned by
  the idempotents $\idem_\T$. Then it is clear from
  \eqref{eq:mult-props} and Corollary \ref{cor:endo-img} that the
  global Gelfand--Tsetlin basis $\{ \frakc^\lambda_{\TS,\T} \}$ is a
  basis consisting of simultaneous (left or right) eigenvectors for
  the action of $\X_n$ by left or right multiplication. To be
  explicit: an arbitrary element $\sum_\U c_\U\, \idem_\U$ of $\X_n$
  acts on $\frakc^\lambda_{\TS,\T}$ by left multiplication as the
  scalar $c_\TS$ and by right multiplication as the scalar $c_\T$.

  (b) Similarly, as already noted in Proposition \ref{prop:max-comm},
  the basis $\{ v_\T : \T \mapsto \lambda \}$ of $V^\lambda$ is a
  basis of simultaneous eigenvectors for the action of $\X_n$. To be
  explicit, the element $\sum_\T c_\T\, \idem_\T$ as above acts as
  $c_\T$ on the basis element $v_\T$, for each $\T$.

  (c) The decomposition $\A_n = \bigoplus_{\T \in \Irr(n)} \A_n
  \idem_\T$, which is a decomposition of $\A_n$ into a direct sum of
  irreducible left ideals, is actually a ``weight space''
  decomposition for the action of $\X_n$ by right multiplication, in
  the sense that each element of $\A_n \idem_\T$ is an eigenvector
  for the right action of an arbitrary element $\sum_{\U \in \Irr(n)}
  c_\U\, \idem_\U$ of $\X_n$, of eigenvalue $c_\T$. A similar remark,
  with left and right interchanged, holds for the decomposition $\A_n
  = \bigoplus_{\T \in \Irr(n)} \idem_\T\A_n$.
\end{rmks}

Thus, we see that in some sense the role of the Gelfand--Tsetlin
algebra $\X_n$ in the theory of multiplicity-free families is
analogous to that of a Cartan subalgebra in the theory of Lie
algebras.

\section{Central idempotents via interpolation}
\label{sec:centrals-via-int}
\noindent
The primitive central idempotents can be computed by a type of
Lagrange interpolation, provided that a generator of the center is
available. This applies to an arbitrary split semisimple finite
dimensional algebra $\A$, so we temporarily drop the assumption that
the algebra fits into a multiplicity-free family.

Note that $\{ \idem(\lambda) \mid \lambda\in\Irr(\A)\}$ is a basis for
the center $Z(\A)$. So any element $z \in Z(\A)$ is uniquely
expressible in the form
\[
    z = \sum_{\lambda \in \Irr(\A)} a_\lambda\, \idem(\lambda).
\]
It follows that $z\cdot \idem(\lambda) = a_\lambda \, \idem(\lambda)$
for all $\lambda\in\Irr(\A)$. Call the tuple
$\bigl(a_\lambda\bigr)_{\lambda \in \Irr(\A)}$ the
\demph{(eigen)spectrum} of $z$. A spectrum is \demph{simple} if it has
no repeated entries.

\begin{lem} \label{lem:spectrum}
(a) An element $z \in Z(\A)$ generates $Z(\A)$ if and only if its
  spectrum is simple.

(b) If $\Bbbk$ has at least as many elements as $|\Irr(\A)|$,
  the center $Z(\A)$ is generated (as an algebra) by a single element.
\end{lem}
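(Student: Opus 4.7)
The plan is to handle part (a) first by relating the algebra $\Bbbk[z]$ generated by $z$ to the minimal polynomial of $z$, and then to deduce part (b) by constructing a generator directly out of the central idempotents.

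For part (a), note that under the decomposition $Z(\A) = \bigoplus_{\lambda \in \Irr(\A)} \Bbbk \, \idem(\lambda)$, any $z = \sum_\lambda a_\lambda \idem(\lambda)$ satisfies $z^k = \sum_\lambda a_\lambda^k \, \idem(\lambda)$ because the $\idem(\lambda)$ are pairwise orthogonal idempotents summing to $1$. Hence the subalgebra $\Bbbk[z] \subseteq Z(\A)$ consists of all elements of the form $\sum_\lambda f(a_\lambda)\, \idem(\lambda)$ as $f$ ranges over polynomials in one variable. The minimal polynomial of $z$ is $\prod_{a \in S}(t-a)$, where $S$ is the set of \emph{distinct} eigenvalues appearing among the $a_\lambda$, so $\dim_\Bbbk \Bbbk[z] = |S|$. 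Since $\dim_\Bbbk Z(\A) = |\Irr(\A)|$, we have $\Bbbk[z] = Z(\A)$ if and only if $|S| = |\Irr(\A)|$, i.e.\ the spectrum is simple.

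For the ``if'' direction it will be useful (and will be needed again in subsequent sections of the paper) to give the inverse construction explicitly via Lagrange interpolation: when the $a_\lambda$ are pairwise distinct, the polynomial
\[
  p_\mu(t) = \prod_{\lambda \neq \mu} \frac{t-a_\lambda}{a_\mu - a_\lambda}
\]
satisfies $p_\mu(a_\lambda) = \delta_{\lambda,\mu}$, so $p_\mu(z) = \idem(\mu)$. Thus every primitive central idempotent lies in $\Bbbk[z]$, yielding $Z(\A)=\Bbbk[z]$.

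For part (b), the hypothesis on $|\Bbbk|$ lets us choose $|\Irr(\A)|$ pairwise distinct scalars $\{c_\lambda\}_{\lambda \in \Irr(\A)}$ in $\Bbbk$. Setting $z = \sum_\lambda c_\lambda\, \idem(\lambda)$ produces a central element whose spectrum $(c_\lambda)$ is simple by construction, so by part (a) this $z$ generates $Z(\A)$.

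The only mild obstacle is being careful about the field size: if $|\Bbbk| < |\Irr(\A)|$ there is no way to separate the eigenvalues, and in that case $Z(\A)$ need not be generated by a single element; the hypothesis in (b) is essential and cannot be weakened. Otherwise the argument is routine once one identifies $\Bbbk[z]$ with the image of the evaluation map $\Bbbk[t] \to Z(\A)$, $f \mapsto f(z)$, and invokes Lagrange interpolation.
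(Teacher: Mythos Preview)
Your proof is correct and follows essentially the same approach as the paper: both argue via the minimal polynomial of $z$ acting on $Z(\A)$ and compare $\dim_\Bbbk \Bbbk[z] = |S|$ to $\dim_\Bbbk Z(\A) = |\Irr(\A)|$, and both prove (b) by choosing $|\Irr(\A)|$ distinct scalars and forming $z = \sum_\lambda c_\lambda\, \idem(\lambda)$. Your explicit Lagrange interpolation formula for recovering $\idem(\mu)$ as $p_\mu(z)$ anticipates what the paper states separately as Proposition~\ref{prop:z-interpolation}.
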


\begin{proof}
(a) Regarded as a linear operator on $Z(\A)$ by multiplication, the
  element $z$ is diagonal with respect to 
  the basis $\{ \idem(\lambda) \mid \lambda\in\Irr(\A)\}$. 
  Let $S = \{ a_\lambda
  \mid \lambda \in \Irr(\A) \}$ be the set of \emph{distinct}
  eigenvalues of $z$. The minimal polynomial of $z$ is $\prod_{a \in
    S} (z- a)$.  Let $m = |\Irr(\A)| = \dim_\Bbbk Z(\A)$. Clearly, the
  element $z$ generates $Z(\A)$ if and only if the set $\{1, z, \dots,
  z^{m-1} \}$ is linearly independent. This is true if and only if the
  minimal polynomial of $z$ has degree $m$. So $z$ generates $Z(\A)$
  if and only if it has simple spectrum.

(b) Choose $m$ distinct elements of $\Bbbk$, say $a_1, \dots
  a_m$. Choose any enumeration $\lambda_1, \dots, \lambda_m$ of the
  elements of $\Irr(\A)$. Then $z = a_1 \idem(\lambda_1) + \cdots + a_m
  \idem(\lambda_m)$ has simple spectrum, hence generates $Z(\A)$. 
\end{proof}

\begin{rmks}
  (a) If $z = \sum_\lambda a_\lambda \,\idem(\lambda)$ is a generator of
  $Z(\A)$ then the change of basis matrix expressing the powers $1, z,
  \dots, z^{m-1}$ in terms of the idempotents $\idem(\lambda)$ is a
  Vandermonde matrix in the $a_\lambda$'s.

  (b) If the field $\Bbbk$ is large compared to $m = |\Irr(\A)|$,
  there are many generators of $Z(\A)$. In fact, if $\Bbbk$ is a
  finite field of $q$ elements, then the probability $P(q)$ that a
  randomly chosen element of $Z(\A)$ actually generates the center
  is
\[
P(q) = \frac{q(q-1)\cdots (q-m+1)}{q^m} =
\bigg(1-\frac{1}{q}\bigg)\cdots \bigg(1-\frac{m-1}{q}\bigg),
\]
Evidently, $\lim_{q \to \infty} P(q) = 1$.
\end{rmks}

The lemma leads immediately to an interpolation formula for the
$\idem(\lambda)$, provided that one can find a generator and compute
its spectrum.

\begin{prop}\label{prop:z-interpolation}
  Suppose that $z$ is a generator of $Z(\A)$, with spectrum
$\bigl(a_\lambda \mid \lambda \in \Irr(\A)\bigr)$. Then the polynomial
  \[
     Q_\lambda(z) = \prod_{\mu \in \Irr(\A): \, \mu \ne \lambda} \frac{z -
       a_\mu}{a_\lambda - a_\mu}
  \]
is equal to $\idem(\lambda)$, for each $\lambda \in \Irr(\A)$. 
\end{prop}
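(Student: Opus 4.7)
The plan is to evaluate $Q_\lambda(z)$ directly using the decomposition of $z$ in the central idempotent basis. Since $z$ has spectrum $(a_\mu)$, by definition $z = \sum_{\mu \in \Irr(\A)} a_\mu\, \idem(\mu)$, and using $\sum_\mu \idem(\mu) = 1$ together with the orthogonality $\idem(\mu)\idem(\nu) = \delta_{\mu,\nu}\idem(\mu)$, for each scalar $a \in \Bbbk$ I would rewrite
\[
z - a = \sum_{\nu \in \Irr(\A)} (a_\nu - a)\, \idem(\nu).
\]
This expresses $z - a$ as a diagonal element in the basis of central idempotents, which is exactly the feature that makes Lagrange interpolation collapse cleanly.

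Next, I would multiply out the Lagrange product. Since the $\idem(\nu)$ are pairwise orthogonal idempotents, the product of any number of diagonal elements stays diagonal, and inductively
\[
\prod_{\mu \neq \lambda} (z - a_\mu) \;=\; \sum_{\nu \in \Irr(\A)} \Bigl(\prod_{\mu \neq \lambda} (a_\nu - a_\mu)\Bigr)\, \idem(\nu).
\]
For any $\nu \neq \lambda$, the factor $\mu = \nu$ appears in the inner product (since $\nu$ itself is one of the indices being ranged over), contributing a $0$ factor; so only the $\nu = \lambda$ summand survives. This yields
\[
\prod_{\mu \neq \lambda} (z - a_\mu) \;=\; \Bigl(\prod_{\mu \neq \lambda} (a_\lambda - a_\mu)\Bigr)\, \idem(\lambda),
\]
and dividing by the (nonzero, by simplicity of the spectrum from Lemma \ref{lem:spectrum}(a)) scalar on the right delivers $Q_\lambda(z) = \idem(\lambda)$.

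There is no real obstacle; the proof is essentially a one-line computation once one observes that evaluating a polynomial at $z$ is diagonal in the basis of central idempotents. The only subtlety is invoking simple spectrum (guaranteed because $z$ generates $Z(\A)$) to ensure the denominator is nonzero, and noticing that the vanishing of the cross terms comes for free from the product range $\mu \neq \lambda$ hitting each competing index.
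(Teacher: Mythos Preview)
Your proof is correct and is essentially the same as the paper's. The paper computes $Q_\lambda(z)\cdot\idem(\mu)=\delta_{\lambda,\mu}\,\idem(\lambda)$ and then sums over $\mu$ using $1=\sum_\mu\idem(\mu)$, while you expand $\prod_{\mu\neq\lambda}(z-a_\mu)$ directly in the idempotent basis; these are the same diagonal evaluation, just phrased slightly differently.
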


\begin{proof}
This is immediate from the fact that $\prod_{\mu \in \Irr(\A)} (z -
a_\mu) = 0$, which implies that 
\[
  Q_\lambda(z) \cdot \idem(\mu) = \delta_{\lambda, \mu} \, \idem(\lambda). 
\]
Hence $Q_\lambda(z) = Q_\lambda(z) \cdot 1 = \sum_{\mu \in \Irr(\A)}
Q_\lambda(z) \cdot \idem(\mu) = \idem(\lambda)$. 
\end{proof}

The formula in Proposition \ref{prop:z-interpolation} is useful only
if we have a way of retrieving $z$'s spectrum without already knowing
the central idempotents. At least in characteristic zero, this can be
done whenever the irreducible trace characters are known.

\begin{prop}[\cite{Ram-PLMS}, Lemma~1.9]\label{prop:char}
  Let $\chi^\lambda$ be the trace character of $V^\lambda$ for any
  $\lambda \in \Irr(\A)$. Suppose that $\Bbbk$ has characteristic
  zero. Writing $z = \sum_\lambda a_\lambda\, \idem(\lambda)$, we have
  $a_\lambda = \frac{\chi^\lambda(z)}{\chi^\lambda(1)}$.
\end{prop}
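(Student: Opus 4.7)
The plan is to exploit the fact that any central element of a split semisimple algebra acts on each irreducible module as a scalar, and then read off that scalar from the trace character.

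First I would observe that for any $\mu \in \Irr(\A)$, the central idempotent $\idem(\mu)$ acts on $V^\lambda$ as the identity if $\mu = \lambda$ and as zero otherwise; this is immediate from the Wedderburn decomposition \eqref{eq:ss-dec}, since $\idem(\lambda)$ was defined as the identity in the $\End_\Bbbk(V^\lambda)$ component. Consequently, $z = \sum_\mu a_\mu\, \idem(\mu)$ acts on $V^\lambda$ as the scalar $a_\lambda$. (Alternatively, this follows from Schur's Lemma together with $z \cdot \idem(\lambda) = a_\lambda\, \idem(\lambda)$.)

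Next, I would take the trace of the action of $z$ on $V^\lambda$. Since $z$ acts as the scalar $a_\lambda$ on the $\chi^\lambda(1)$-dimensional space $V^\lambda$, we get
\[
\chi^\lambda(z) = a_\lambda \cdot \dim_\Bbbk V^\lambda = a_\lambda \cdot \chi^\lambda(1).
\]
Solving for $a_\lambda$ yields the desired formula, provided that we may divide by $\chi^\lambda(1)$.

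The only real obstacle is precisely that division: we need $\chi^\lambda(1) = \dim_\Bbbk V^\lambda$ to be invertible in $\Bbbk$. This is where the characteristic zero hypothesis enters in an essential way, since in positive characteristic the dimension of an irreducible module could vanish in $\Bbbk$ and the formula would fail to make sense. Under the standing assumption $\operatorname{char} \Bbbk = 0$, the dimension is a positive integer and hence invertible, so the proof concludes.
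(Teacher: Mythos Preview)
Your proof is correct and follows essentially the same approach as the paper: observe that $z$ acts on $V^\lambda$ as the scalar $a_\lambda$, take traces to get $\chi^\lambda(z) = a_\lambda\,\chi^\lambda(1)$, and divide. You are slightly more explicit than the paper about why the central idempotents act as they do and about where the characteristic-zero hypothesis is used, but the argument is the same.
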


\begin{proof}
  For any $v \in V^\lambda$, $z$ acts as $a_\lambda$; i.e.,
  $z \cdot v = a(\lambda)\, v$, so $\trace(z)$ on $V^\lambda$ is equal
  to $(\dim_\Bbbk V^\lambda)\, a_\lambda$.  In other words,
  $\chi^\lambda(z) = \chi^\lambda(1) \, a_\lambda$.
\end{proof}

The above analysis leads to a probabilistic algorithm for computing
the primitive central idempotents.

\begin{alg}
  Suppose that $\Bbbk$ has characteristic zero. Then to
  compute all the central idempotents $\idem(\lambda)$,
  \begin{enumerate}
  \item Pick a random $z\in Z(\A)$ and compute its spectrum (using
    Proposition \ref{prop:char} or otherwise). If the spectrum is not
    simple, try again.

    \item Once a generator $z$ with simple spectrum is found, use
      Proposition \ref{prop:z-interpolation} to compute the
      $\idem(\lambda)$ for all $\lambda \in \Irr(\A)$.
  \end{enumerate}
\end{alg}

If this can be carried out, the formulas thus obtained will express
the $\idem(\lambda)$ in terms of polynomial expressions in some random
central element. One would usually prefer to have expressions for the
$\idem(\lambda)$ in terms of elements that are understood in some
explicit way. At the least, one would prefer to understand how the
chosen central generator $z$ interacts with some set of standard
generators for the algebra $\A$.

\begin{example}\label{ex:simple-zn}
  Let $\A = \mathcal{H}_q(n)$ be the Iwahori--Hecke algebra
  corresponding to the symmetric group $\Sym_n$, over a field $\Bbbk$
  such that $0 \ne q \in \Bbbk$, and assume that $\mathcal{H}_q(n)$ is
  split semisimple. In \cite{Dipper-James}, certain $q$-analogues of
  the original Jucys--Murphy elements in $\Bbbk\Sym_n$ were
  constructed in $\mathcal{H}_q(n)$. As pointed out in
  \cite{OP}*{\S8.1}, their sum $Z_n$ has simple spectrum, hence is a
  generator of the center $Z(\mathcal{H}_q(n))$. Thus the formula in
  Proposition \ref{prop:z-interpolation} computes the primitive
  central idempotents $\idem(\lambda) \in \mathcal{H}_q(n)$ for each
  $\lambda \vdash n$.
\end{example}

\begin{example}\label{ex:non-simple-zn}
  Let $\A = \Bbbk \Sym_n$ be the group algebra of a symmetric group
  over a field $\Bbbk$ of characteristic zero. Let $z_n$ be the formal
  sum of all the transpositions in $\Sym_n$, regarded as an element of
  $\Bbbk\Sym_n$. This is precisely the element obtained from the
  element $Z_n$ in the previous example, if $q$ is specialized to $1$.
  The central element $z_n$ generates $Z(\Bbbk \Sym_n)$ for
  $n = 2, 3, 4$, and $5$, but for $n=6$ it fails to do so. The
  eigenvalues of $z_6$ on the irreducible modules indexed by partitions
  $(4,1^2)$ and $(3,3)$ coincide. Likewise for $(3,1^3)$ and
  $(2^3)$. At this writing, we do not know of any satisfactory uniform
  choice of elements $z_n \in \Bbbk \Sym_n$ generating the respective
  centers. This seems to be an interesting open problem.
\end{example}


To conclude this section, we mention an alternative approach to
computing the primitive central idempotents for $\A$. Recall that if
$\A=\C G$ for a finite group $G$, Frobenius gave a formula for
$\idem(\lambda)$ in terms of the simple character $\chi_G^\lambda$.
This result was extended to split semisimple finite dimensional
algebras in characteristic zero by Kilmoyer; however, it involves
inverting a $(\dim\A)\times(\dim\A)$ matrix. See Appendix
\ref{sec:pci} for a brief exposition.

\section{Generalized Jucys--Murphy sequences}\label{sec:jm}
\noindent
Now we return to the study of multiplicity-free families $\{ \A_n
\}_{n \ge 0}$ and the problem of computing the canonical idempotents
$\{\idem_\T \mid \T\in\Tab(n)\}$, which form a basis of the
(commutative) Gelfand--Tsetlin subalgebra $\X_n$ ($n\geq
0$). Extracting key elements from the work of Jucys and Murphy, we
show how a carefully selected sequence of elements (one from each
$\X_n$) can be used to effectively solve this problem.

Before we begin, we apply the results of the previous section to this
end. Given a sequence $(z_n\in \X_n \mid n\geq1)$ of center-generating
elements, i.e., elements satisfying $\langle z_n\rangle = Z(\A_n)$, we
reach the $\idem_\T$ in two steps:
\begin{itemize}
\item[(i)] use Propositions \ref{prop:z-interpolation} and
  \ref{prop:char} to compute the $\idem(\lambda)$;
\item[(ii)] use Definition \ref{def:idem-T} to compute the $\idem_\T$.
\end{itemize}
So the problem is solved, provided one can find a sequence of
center-generating elements.  However, as noted in Example
\ref{ex:non-simple-zn}, even for the family of group algebras of
symmetric groups, such a sequence is not known.

Murphy \cite{Murphy-81} found a non center-generating sequence of
elements---known independently to Young \cite{Young} and Jucys
\cite{Jucys}---and applied them to give a new construction of Young's
seminormal form of symmetric group algebras. In recent years,
analogues of such elements have been found in a number of other
multiplicity-free families. The two key properties of the
Young--Jucys--Murphys elements are abstracted in the next
definition. But first, some notation.

From the definition of $\X_n$ we have a sequence of inclusions
\begin{equation}\label{eq:incl-X}
\X_1 \subset \cdots \subset \X_{n-1} \subset \X_n.
\end{equation}
Now given $J_1 \in \X_1$, $J_2 \in \X_2, \dots, J_n \in \X_n$, the
inclusions \eqref{eq:incl-X} imply that $J_1, J_2, \dots, J_n \in
\X_n$. Since $\{ \idem_\T \}$ is a basis of $\X_n$, we have scalars
$c_\T(k) \in \Bbbk$, for each $k = 1, \dots, n$, such that
\begin{equation}\label{eq:J-sum}
  J_k = \textstyle \sum_{\T \in \Tab(n)} c_\T(k)\, \idem_\T.
\end{equation}
In this way we associate an $n$-tuple $c_\T$ to each $\T\in\Tab(n)$, 
\begin{equation}
  c_\T = \bigl(c_\T(1), \dots, c_\T(n)\bigr) \in \Bbbk^n.
\end{equation}
We call this $n$-tuple the
\demph{$\T$-content} for the sequence $J_1, \dots, J_n$.

\begin{defn}\label{def:JM-defn}
Let $(J_n \mid n \in \N)$ be a sequence of elements such that $J_n \in
\X_n$ for each $n$. We say that the sequence is:
\begin{enumerate}
\item 
  \emph{additively central}\,\footnote{\,In some multiplicity-free families,
    one can find \emph{multiplicatively central} sequences. These
    sequences, which were considered in \cite{Goodman-Graber:JM}, have
    the property that the partial product $J_1 J_2 \cdots J_n$ belongs
    to $Z(\A_n)$, for all $n \in \N$, and furthermore that it acts as a
    nonzero scalar on each $V^\lambda$, $\lambda \in \Irr(n)$. The
    results in this section are equally valid in the multiplicative
    case, modulo a few adjustments that we leave to the reader.} if
  the $n$th partial sum $J_1 + \cdots + J_{n-1} + J_n$ belongs to
  $Z(\A_n)$, for all $n \in \N$; and
\item 
  \emph{separating} if $\X_n = \gen{J_1, J_2, \dots, J_n}$, for all
  $n \in \N$.
\end{enumerate}
The sequence is a 
\emph{Jucys--Murphy sequence} (JM-sequence for short) if it is both 
additively central and separating. 
\end{defn}

To explain our terminology, we mention that additively central
sequences allow for ease of computation of the content vectors
(Proposition \ref{prop:central-J_n-property}), while separating
sequences allow content vectors to distinguish different paths
$\TS,\T\in\Tab(n)$ (Proposition
\ref{prop:separation-generation-equivalent}).

\begin{prop}\label{prop:existence-JM}
  JM-sequences in multiplicity free families always exist, provided
  that the underlying field is infinite.
\end{prop}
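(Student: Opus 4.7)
The plan is a recursive construction leveraging Lemma~\ref{lem:spectrum}(b). Take $J_1$ to be any algebra generator of $Z(\A_1) = \X_1$, which is available since $\Bbbk$ is infinite. Inductively, suppose $J_1, \ldots, J_{n-1}$ is a JM-sequence at level $n-1$. Pick a generator $z_n$ of $Z(\A_n)$ (again via Lemma~\ref{lem:spectrum}(b)) and set
\[
  J_n := z_n - (J_1 + \cdots + J_{n-1}).
\]
Both summands on the right lie in $\X_n$ by the chain \eqref{eq:incl-X}, so $J_n \in \X_n$. Additive centrality is then immediate: $J_1 + \cdots + J_n = z_n \in Z(\A_n)$.

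The substantive step is verifying separation, $\X_n = \gen{J_1, \ldots, J_n}$. By Proposition~\ref{prop:idem-gen} the algebra $\X_n$ is spanned by its primitive idempotents $\idem_\T$ and is therefore isomorphic to $\Bbbk^{|\Tab(n)|}$; under this identification $J_k$ corresponds to the tuple $(c_\T(k))_\T$. A unital subalgebra of $\Bbbk^N$ equals the whole algebra if and only if its generating tuples separate the $N$ coordinate points (a Lagrange-interpolation argument analogous to the one used in Proposition~\ref{prop:z-interpolation}), so I would reduce separation to showing that the map $\T \mapsto (c_\T(1), \ldots, c_\T(n))$ is injective. A preparatory observation is that $c_\T(k)$ depends only on the truncation of $\T$ to level $k$: since $J_k \in \X_k$ acts on $v_\T$ through the embedded $\A_k$-submodule of $V^{\lambda_n}$ generated by $v_\T$, which is isomorphic to $V^{\lambda_k}$ by the Gelfand--Tsetlin decomposition.

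For distinct $\T, \T' \in \Tab(n)$ I split into two cases. If $\ov{\T} \neq \ov{\T'}$, the inductive separation at level $n-1$ provides an index $k \leq n-1$ distinguishing the level-$(n-1)$ content vectors, and the truncation observation shows this same $k$ distinguishes $c_\T$ and $c_{\T'}$. If instead $\ov{\T} = \ov{\T'}$ but $\T \neq \T'$, the terminal vertices $\lambda_n$ and $\lambda_n'$ differ while the penultimate vertex $\lambda_{n-1}$ is common; so $S_{n-1} = J_1 + \cdots + J_{n-1} \in Z(\A_{n-1})$ acts by the same scalar on $v_\T$ and $v_{\T'}$, and the difference $c_\T(n) - c_{\T'}(n)$ reduces to $\zeta(\lambda_n) - \zeta(\lambda_n')$, where $\zeta$ records the spectrum of $z_n$. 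This is nonzero by the simple-spectrum half of Lemma~\ref{lem:spectrum}(a), completing separation.

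The main obstacle is arranging both axioms simultaneously at each new level; the design choice $J_n = z_n - S_{n-1}$ is what makes this work, since it forces additive centrality by construction and decouples the new eigenvalue contribution from the accumulated one so that the final-coordinate analysis collapses exactly onto the simple spectrum of $z_n$.
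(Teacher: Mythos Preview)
Your construction is exactly the paper's: writing $S_{n-1}=J_1+\cdots+J_{n-1}$, additive centrality at level $n-1$ gives $S_{n-1}=z_{n-1}$, so your $J_n=z_n-S_{n-1}$ is the paper's $J_n=z_n-z_{n-1}$. Your verification of additive centrality is also identical.

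Where you diverge is the separation step. You pass through the spectral characterization (content vectors distinguish paths), effectively re-deriving Proposition~\ref{prop:separation-generation-equivalent} and Proposition~\ref{prop:GJM-separation}(a) on the fly and then doing a two-case analysis on whether $\ov\T=\ov{\T'}$. This is correct, but considerably more work than needed. The paper dispatches separation in one line straight from the definition of $\X_n$: since $\X_n=\langle Z(\A_1),\ldots,Z(\A_n)\rangle=\langle \X_{n-1},Z(\A_n)\rangle$ and $z_n$ generates $Z(\A_n)$, one has $\X_n=\langle \X_{n-1},z_n\rangle=\langle \X_{n-1},J_n\rangle=\langle J_1,\ldots,J_n\rangle$, using only that $z_n-J_n=z_{n-1}\in\X_{n-1}$. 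Your route has the merit of previewing why the spectral viewpoint matters later, but for the bare existence statement the paper's argument is the natural one and avoids any forward reference.
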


\begin{proof}
Let $(z_n)_{n \in \N}$ be a center-generating sequence, which exists by
Lemma 2.1(b).  Then putting $J_n := z_n - z_{n-1}$ (and stipulating
that $z_0=0$), it is easy to check that $(J_n)_{n \in \N}$ is a sequence
that is additively central. Assuming inductively that
$\X_{n-1}=\gen{J_1,\ldots,J_{n-1}}$, we have
\[
\X_n = \gen{\X_{n-1},Z(\A_n)} = 
\gen{\X_{n-1},z_n} = 
\gen{\X_{n-1},J_n} = 
\gen{J_1,\ldots,J_n}.
\]
This shows that $(J_n)_{n \in \N}$ is also separating.
\end{proof}
We next investigate the independent notions of additively central and
separating sequences before returning to JM-sequences for our main
result (Theorem \ref{thm:central-P-recursion}).

Let $(J_k)_{k \in \N}$ be an additively central sequence.  As $z_n =
\sum_{k=1}^n J_k \in Z(\A_n)$, it acts as some scalar $a_\lambda$ on
any irreducible representation $V^\lambda$, for $\lambda \in
\Irr(n)$. Similarly, $z_{n-1} = \sum_{k=1}^{n-1} J_k \in \A_{n-1}$, so
$z_{n-1}$ acts as a scalar $a_{\mu}$ on any $V^\mu$, for $\mu \in
\Irr(n{-}1)$. The next proposition shows how these scalars determine
the $n$th eigenvalue $c_\T(n)$. (Recall from Section \ref{sec:mfa} the
construction of $\ov{\T} \in \Tab(n{-}1)$ for any $\T\in\Tab(n)$.)

\begin{prop}\label{prop:central-J_n-property}
  Let $(J_k \mid k \in \N)$ be an additively central sequence of elements
  in a given multiplicity-free family $\{\A_n \mid n \ge 0 \}$.  For
  any $n$, let $\lambda \in \Irr(n)$. For any $\T \mapsto \lambda$ we
  have $c_\T(n) = a_{\lambda} - a_{\mu}$, where $\ov{\T} \mapsto \mu$.
\end{prop}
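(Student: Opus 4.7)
The plan is to compute $J_n \cdot v_\T$ in two ways and equate the results, where $v_\T$ denotes a chosen Gelfand--Tsetlin basis vector of $V^\lambda$ indexed by $\T$.

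For the first computation, expand $J_n = \sum_{\U \in \Tab(n)} c_\U(n)\, \idem_\U$ as in \eqref{eq:J-sum}. Since $\idem_\U$ acts on $v_\T$ as the scalar $\delta_{\U,\T}$ (by Proposition~\ref{prop:idem-gen} and the discussion preceding it), this yields $J_n \cdot v_\T = c_\T(n)\, v_\T$. So $c_\T(n)$ is precisely the $J_n$-eigenvalue on $v_\T$.

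For the second computation, I would invoke the additively central hypothesis. Setting $z_k = J_1 + \cdots + J_k$ for $k \ge 0$ with the convention $z_0 = 0$, we have $J_n = z_n - z_{n-1}$. By hypothesis $z_n \in Z(\A_n)$, so it acts on $V^\lambda$ as the scalar $a_\lambda$; hence $z_n \cdot v_\T = a_\lambda\, v_\T$. Similarly $z_{n-1} \in Z(\A_{n-1})$. Writing $\ov{\T} \mapsto \mu$, the vector $v_\T$ lies in the canonical $\A_{n-1}$-summand of $\res_{\A_{n-1}} V^\lambda$ that is isomorphic to $V^\mu$ (see the discussion preceding \eqref{eq:res-A_1}); therefore $z_{n-1}$ acts on $v_\T$ as the scalar $a_\mu$, giving $z_{n-1} \cdot v_\T = a_\mu\, v_\T$.

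Equating the two expressions for $J_n \cdot v_\T$ gives $c_\T(n)\, v_\T = (a_\lambda - a_\mu)\, v_\T$, and since $v_\T \ne 0$ this forces $c_\T(n) = a_\lambda - a_\mu$. There is no substantive obstacle in this argument; the one point requiring care is the identification of the canonical $\A_{n-1}$-summand containing $v_\T$ with $V^\mu$, which is built into the Gelfand--Tsetlin construction recalled in Section~\ref{sec:mfa}.
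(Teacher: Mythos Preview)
Your argument is correct and follows essentially the same approach as the paper: write $J_n = z_n - z_{n-1}$, use that $z_n$ acts as $a_\lambda$ on $v_\T$ and that $z_{n-1}$ acts as $a_\mu$ because $v_\T = v_{\ov{\T}}$ lies in the $V^\mu$-summand of the restriction, and conclude. The only difference is expository: you explicitly verify via \eqref{eq:J-sum} that $J_n$ acts on $v_\T$ by the scalar $c_\T(n)$, whereas the paper leaves this implicit.
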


\begin{proof}
By hypothesis, we have $J_n = z_n - z_{n-1}$ for all $n \in \N$ (where
we set $z_0=0$). Then $z_n$ acts by right multiplcation as $a_\lambda$
on $v_\T$. By hypothesis we have $v_\T = v_{\ov{\T}}$ since $\T$ has
the form
\[
  \T = (\lambda_0 \to \lambda_1 \to \cdots \to \lambda_{n-1} \to
  \lambda_n),
\]
where $\lambda_n = \lambda$ and $\lambda_{n-1} = \mu$. So the element
$z_{n-1}$ acts as the scalar $a_\mu$ on
$v_\T$, and thus by linearity $J_n = z_n - z_{n-1}$ acts on $v_\T$ as
the scalar $a_\lambda - a_\mu$. The result is proved.
\end{proof}

\begin{rmk}\label{rmk:labels}
Note that Proposition \ref{prop:central-J_n-property} says that the
eigenvalue $c_\T(n)$ depends only on the last edge $\lambda_{n-1} \to
\lambda_n$ of the path $\T$ in the branching graph. So, whenever we
have an additively central sequence in our multiplicity-free family,
it makes sense to label each edge in level $n$ of the branching graph
by its corresponding eigenvalue $c_\T(n)$.  Figure \ref{Bratteli-Sym}
gives an example of such a labeled graph.
\end{rmk}

Let $(J_n)_{n \in \N}$ be any sequence with $J_n \in \X_n$ for each
$n$. In the proof of the following result, which characterizes
separating sequences, we will focus on the $\T$-contents one
coordinate at a time. Put $\Wt(k) := \{c_\T(k) \mid \T \in
\Tab(n)\}$. This is the set of eigenvalues of the operator $J_k$
acting on the various $\idem_\T$.

\begin{prop}\label{prop:separation-generation-equivalent}
  Given a sequence $J_1 \in \X_1$, $J_2 \in \X_2, \dots, J_n \in \X_n$, and 
  corresponding content vectors $c_\T$,
  the following are equivalent.
  \begin{enumerate}
  \item For all $\TS,\T\in\Tab(n)$, $\TS=\T \iff c_\TS = c_\T$.
  \item $\gen{J_1, \dots, J_n} = \X_n$.
  \end{enumerate}
\end{prop}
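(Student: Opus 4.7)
\smallskip

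\noindent\textbf{Proof proposal.} The plan is to exploit the fact that any polynomial in $J_1,\dots,J_n$ is again diagonal in the basis $\{\idem_\T\mid \T\in\Tab(n)\}$ of $\X_n$, with eigenvalue on $\idem_\T$ determined by the content vector $c_\T$. Indeed, since each $J_k=\sum_{\U}c_\U(k)\,\idem_\U$ and the $\idem_\U$ are pairwise orthogonal idempotents summing to $1$, for any polynomial $p\in\Bbbk[x_1,\dots,x_n]$ we have
\[
p(J_1,\dots,J_n) \;=\; \sum_{\U\in\Tab(n)} p\bigl(c_\U(1),\dots,c_\U(n)\bigr)\,\idem_\U.
\]
This identity is the sole engine of the proof. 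Note also that the inclusion $\gen{J_1,\dots,J_n}\subseteq\X_n$ is automatic because each $J_k\in\X_n$ and $\X_n$ is a commutative subalgebra.

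For the direction (b)$\Rightarrow$(a), I would argue the contrapositive. Suppose there exist $\TS\neq\T$ with $c_\TS=c_\T$. Then by the displayed identity, \emph{every} element of $\gen{J_1,\dots,J_n}$ has equal coefficients at $\idem_\TS$ and at $\idem_\T$ when expanded in the basis $\{\idem_\U\}$. But the element $\idem_\T\in\X_n$ has coefficient $1$ at $\idem_\T$ and $0$ at $\idem_\TS$, so $\idem_\T\notin\gen{J_1,\dots,J_n}$, contradicting (b).

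For the direction (a)$\Rightarrow$(b), I would use a Lagrange interpolation construction. Fix $\T\in\Tab(n)$ and consider
\[
Q_\T \;=\; \prod_{k=1}^{n} \prod_{\substack{a\in\Wt(k)\\ a\neq c_\T(k)}} \frac{J_k - a}{c_\T(k)-a},
\]
where $\Wt(k)=\{c_\U(k)\mid \U\in\Tab(n)\}$ is the spectrum of $J_k$ on $\X_n$. Clearly $Q_\T\in\gen{J_1,\dots,J_n}$. By the displayed identity, the eigenvalue of $Q_\T$ on $\idem_\U$ is $1$ if $c_\U(k)=c_\T(k)$ for every $k$, and $0$ otherwise. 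Under hypothesis (a), this condition on $\U$ is equivalent to $\U=\T$, so $Q_\T=\idem_\T$. Hence every $\idem_\T$ lies in $\gen{J_1,\dots,J_n}$, and since the $\idem_\T$ span $\X_n$ (Proposition~\ref{prop:idem-gen}), we conclude $\gen{J_1,\dots,J_n}=\X_n$.

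The argument is really just two applications of the same ``polynomials act by scalars on the $\idem_\T$'' observation, so there is no serious obstacle; the only subtlety is to make sure the Lagrange product is taken over \emph{distinct} eigenvalues $a\in\Wt(k)$ so that the denominators are nonzero, which is immediate once one notices that the quantifier is over the set $\Wt(k)$ rather than over all paths.
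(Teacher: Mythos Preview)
Your proof is correct and follows essentially the same approach as the paper: the (a)$\Rightarrow$(b) direction uses the identical Lagrange interpolation product (the paper calls it $E_\T(J)$), and the (b)$\Rightarrow$(a) direction is the same ``polynomials in the $J_k$ cannot separate $\T$ from $\TS$ if $c_\T=c_\TS$'' observation. The only cosmetic difference is that for (b)$\Rightarrow$(a) the paper phrases the argument via the action of $\idem_\T$ on the Gelfand--Tsetlin vectors $v_\T,v_\TS$ rather than via coefficients in the $\idem_\U$-basis of $\X_n$, but these are equivalent and your version is arguably more self-contained.
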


\begin{proof}
(a) $\Rightarrow$ (b).  We aim to show that $\idem_\T \in \gen{J_1,
    \dots, J_n}$ for each $\T \in \Tab(n)$, which would complete this
  half of the proof. To this end, note that the polynomial $E_\T(J) =
  E_\T(J_1, \dots, J_n)$ defined by
\[
E_\T(J) = \prod_{k=1}^n \prod_{\substack{c \in \Wt(k) \\ c \neq
    c_\T(k)}} \frac{J_k - c}{c_\T(k) - c}
\]
is well-defined as an operator on $\X_n$ (acting by
multiplication). Given $\TS \ne \T$, $E_\T(J)$ acts on the basis
element $\idem_\TS$ as
\[
E_\T(J) \,\idem_\TS = \prod_{k=1}^n \prod_{\substack{c\in\Wt(k)\\
    c\neq c_\T(k)}} \frac{(c_\TS(k)-c)\idem_\TS}{c_\T(k)-c} = 0,
\]
since $c_\TS(k)$ is among the $c\in\Wt(k)$ and $c_\TS(k) \ne c_\T(k)$
for at least one $k$, by (a).  A similar calculation shows that
$E_\T(J)\, \idem_\T = \idem_\T$.  Hence
\[
E_\T(J) \, = \, E_\T(J) \,\cdot\!\!\! \sum_{\TS\in\Tab(n)} \!\!\!\idem_\TS \,\, 
= \sum_{\TS\in\Tab(n)}  \!\!\!E_\T(J)\, \idem_\TS \, = \, \idem_\T,
\]
and hence $\idem_\T\in \gen{J_1,\ldots,J_n}$, as required.

(b) $\Rightarrow$ (a). Assume that $\TS,\T$ are paths such that $c_\TS
= c_\T$. We show that $\TS=\T$. Note that for any polynomial
$F(J_1,\ldots, J_n) \in \gen{J_1, \dots, J_n}$, $F$ acts on $v_\T$ and
$v_\TS$ by the same scalar, namely $F(c_\T(1), \dots, c_\T(n)) =
F(c_\TS(1), \dots, c_\TS(n))$.  Under the hypothesis (b), i.e.,
$\gen{J_1, \dots, J_n} = \X_n$, we know that $\idem_\T$ is such a
polynomial. Since
\[
  \idem_\T\cdot v_\T = v_\T
  \qand
  \idem_\T\cdot v_\TS = \delta_{\T,\TS} v_\TS,
\]
we must have $\delta_{\T,\TS}=1$. So $\TS = \T$. The converse
implication, that $\TS = \T$ implies $c_\TS = c_\T$, is trivial.
\end{proof}

Note that if $\X_n = \gen{J_1, \dots, J_n}$ then the above proof gives
the explicit formula
\begin{equation}\label{eq:idem_T-Lagrange}
  \idem_\T = E_\T(J) = \prod_{k=1}^n \prod_{\substack{c \in \Wt(k) \\ c \neq
      c_\T(k)}} \frac{J_k - c}{c_\T(k) - c} \,,
\end{equation}
expressing the canonical idempotents $\idem_\T$ in terms of the
separating sequence.  (A similar interpolation formula appears in
\cite{Murphy-81} in the context of symmetric group algebras.)
We find another interpolating polynomial for the $\idem_\T$, having
significantly lower degree than this one, in Theorem
\ref{thm:idem-T-recursion}.

\begin{prop} \label{prop:GJM-separation}
Let $(J_k \mid k \in \N)$ be a separating sequence in the multiplicity-free
family $\{\A_k \mid k \ge 0 \}$. Suppose that $\TS, \T \in
\Tab(n)$. Then
\begin{enumerate}
  \item $c_\T(k) = c_{\ov{\T}}(k)$ for all $k < n$.

  \item If $\ov{\TS} = \ov{\T}$ but $\TS \ne \T$ then $c_\TS(n) \ne
      c_\T(n)$.
\end{enumerate}
\end{prop}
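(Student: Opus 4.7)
My plan proceeds in two pieces: (a) is the substantive step, and (b) follows almost formally from (a) together with Proposition~\ref{prop:separation-generation-equivalent}.

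For part~(a), the idea is to reinterpret the content $c_\T(k)$ as an eigenvalue and then localize its computation to a lower-level module. By Remark~\ref{rmk:eigenvalues}(b), the identity $J_k = \sum_{\U \in \Tab(n)} c_\U(k)\, \idem_\U$ is equivalent to saying that $J_k$ acts on the Gelfand--Tsetlin vector $v_\T \in V^{\lambda_n}$ as multiplication by $c_\T(k)$. Since $J_k \in \X_k \subseteq \A_k$, one can read off this eigenvalue inside the $\A_k$-submodule of $V^{\lambda_n}$ generated by $v_\T$. By the discussion immediately following \eqref{eq:res-A_1}, that submodule is canonically isomorphic to $V^{\lambda_k}$, and under the isomorphism $v_\T$ corresponds (up to scalar) to the Gelfand--Tsetlin vector of $V^{\lambda_k}$ indexed by the truncated path $(\lambda_0 \to \cdots \to \lambda_k)$. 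Consequently $c_\T(k)$ depends only on this truncation. For any $k \le n-1$, the truncations of $\T$ and $\ov{\T}$ at level $k$ coincide, which yields $c_\T(k) = c_{\ov{\T}}(k)$.

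For part~(b), suppose $\ov{\TS} = \ov{\T}$ and $\TS \ne \T$. The separating hypothesis, via Proposition~\ref{prop:separation-generation-equivalent}, forces distinct paths in $\Tab(n)$ to have distinct content vectors, so $c_\TS \ne c_\T$. On the other hand, applying (a) to each path, and using $\ov{\TS} = \ov{\T}$, gives
\[
c_\TS(k) = c_{\ov{\TS}}(k) = c_{\ov{\T}}(k) = c_\T(k)
\]
for every $k < n$. The only remaining coordinate is $k = n$, where the two vectors must therefore disagree; this is exactly $c_\TS(n) \ne c_\T(n)$.

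The main conceptual step is the identification of $c_\T(k)$ with an eigenvalue that only sees the truncation $\T|_k$ of the path; this is essentially encoded in the construction of the Gelfand--Tsetlin basis through the nested sequence of submodules of $V^{\lambda_n}$ identified with $V^{\lambda_0}, V^{\lambda_1}, \ldots, V^{\lambda_n}$, so I do not anticipate any serious technical obstacle.
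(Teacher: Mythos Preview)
Your proof is correct. Part~(b) is essentially identical to the paper's argument. For part~(a), however, you take a genuinely different route: you work module-theoretically, identifying $c_\T(k)$ as the eigenvalue of $J_k$ on $v_\T$ and then localizing this computation to the $\A_k$-submodule $\idem(\lambda_k)\cdots\idem(\lambda_n)V^{\lambda_n} \cong V^{\lambda_k}$, where $v_\T$ sits as the Gelfand--Tsetlin vector indexed by the truncated path. The paper instead works entirely inside the algebra: it uses the recursion $\idem_\T = \idem_{\ov{\T}}\,\idem(\lambda)$ from Remark~\ref{rmk:idem-recursive} together with the level-$(n{-}1)$ analogue of \eqref{eq:J-sum}, and simply computes
\[
c_\T(k)\,\idem_\T = J_k\,\idem_\T = J_k\,\idem_{\ov{\T}}\,\idem(\lambda) = c_{\ov{\T}}(k)\,\idem_{\ov{\T}}\,\idem(\lambda) = c_{\ov{\T}}(k)\,\idem_\T.
\]
The paper's argument is shorter and purely algebraic, avoiding any discussion of how Gelfand--Tsetlin bases at different levels are compatible under the nested submodule identifications. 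Your approach, while slightly longer, makes explicit the conceptual point that $c_\T(k)$ depends only on the truncation $\T[k]$, which is a mildly stronger statement than (a) itself and is morally what underlies later uses of the result. Both arguments make it clear that (a) holds for \emph{any} sequence with $J_k \in \X_k$, not just separating ones.
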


\begin{proof}
(a) This follows from \eqref{eq:J-sum} and its analog for
  $\Tab(n{-}1)$, and the recursive description $\idem_\T =
  \idem_{\ov{\T}} \, \idem(\lambda)$ in Remark
  \ref{rmk:idem-recursive}. Specifically, we have
\[
c_\T(k)\idem_\T = J_k \cdot \idem_\T = J_k \cdot \idem_{\ov{\T}} \,
\idem(\lambda) = c_{\ov{\T}}(k)\, \idem_{\ov{\T}} \, \idem(\lambda) =
c_{\ov{\T}}(k)\, \idem_\T
\]
for any $k < n$.

(b) Since $\ov{\TS} = \ov{\T}$, it follows from part (a) that $c_\TS(k)
= c_\T(k)$ for all $k < n$.  If $\TS\neq \T$ and $c_\TS(n)=c_\T(n)$,
then $c_\TS = c_\T$ and we reach a contradiction with Proposition
\ref{prop:separation-generation-equivalent}.
\end{proof}

Proposition \ref{prop:GJM-separation} implies that the following
polynomial is well-defined. 

\begin{defn}\label{def:P_T}
Let $(J_n \mid n \in \N)$ be a separating sequence in a
multiplicity-free family. For any $\T \in \Tab(n)$, put
\[
  P_\T(J_n) \ := \prod_{\substack{\TS \in \Tab(n) \\ \TS \ne \T, \, \ov{\TS} =
    \ov{\T}}} \frac{J_n - c_{\TS}(n)}{c_\T(n) - c_\TS(n)}.
\]
\end{defn}

The next theorem shows that these polynomials can be used to
recursively compute the idempotents $\idem_\T$. It extends
\cite{Garsia}*{Theorems 3.4, 3.5} from symmetric group algebras to
multiplicity-free families. First, we record some basic properties of
the $P_\T(J_n)$.  Given $\TS,\T \in \Tab(n)$, we have
\begin{align}
  P_\T(J_n) \cdot \idem_\T &= \idem_\T \label{eq:P_T-one} \\ P_\T(J_n)
  \cdot \idem_{\TS} &= 0 \text{ if } \TS \ne \T \text{ but } \ov{\TS}
  = \ov{\T}. \label{eq:P_T-two}
\end{align}
The proof is an easy calculation from the definition. 

Let $\T = (\lambda_0\to \lambda_1 \to \cdots \to
\lambda_n)\in\Tab(n)$. In the next result, $\T[k]$ denotes the subpath
up to vertex $\lambda_k$ of the path $\T$, so, e.g., $\T[n-1] =
\ov{\T}$.

\begin{thm}\label{thm:idem-T-recursion}
Assume that $(J_n \mid n \in \N)$ is a separating sequence. Then, for any $\T
\in \Tab(n)$, $\idem_\T = \idem_{\ov{\T}} \, P_\T(J_n)$. Hence, 
$
	\idem_\T =\prod_{k=1}^{n} P_{\T[k]}(J_{k}) \,.
$
\end{thm}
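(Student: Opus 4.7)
The plan is to prove the first identity $\idem_\T = \idem_{\ov{\T}}\,P_\T(J_n)$ directly using the two properties \eqref{eq:P_T-one} and \eqref{eq:P_T-two} of $P_\T(J_n)$, and then obtain the product formula by a trivial induction on $n$.

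The key preliminary observation is that the level-$(n{-}1)$ idempotent $\idem_{\ov{\T}}$ admits a clean expansion in the basis $\{\idem_\TS \mid \TS \in \Tab(n)\}$ of $\X_n$, namely
\[
\idem_{\ov{\T}} \;=\! \sum_{\substack{\TS \in \Tab(n) \\ \ov{\TS} = \ov{\T}}} \!\idem_\TS.
\]
To see this, I would multiply $\idem_{\ov{\T}}$ by $1 = \sum_{\TS \in \Tab(n)} \idem_\TS$ and use the recursive description $\idem_\TS = \idem_{\ov{\TS}}\,\idem(\lambda^{(\TS)})$ from Remark \ref{rmk:idem-recursive}: pairwise orthogonality at level $n{-}1$ kills all terms except those with $\ov{\TS} = \ov{\T}$, while for those surviving terms $\idem_{\ov{\T}}\,\idem_\TS = \idem_{\ov{\T}}\,\idem(\lambda^{(\TS)}) = \idem_\TS$.

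With this expansion in hand, I would multiply through by $P_\T(J_n)$ (using commutativity of $\X_n$, which contains both factors) and split the resulting sum into the $\TS = \T$ term and the remaining $\TS \ne \T$ terms with $\ov{\TS} = \ov{\T}$. By \eqref{eq:P_T-one} the former contributes $\idem_\T$, and by \eqref{eq:P_T-two} every remaining term is zero. This yields $\idem_{\ov{\T}}\,P_\T(J_n) = \idem_\T$.

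For the product formula, I would induct on $n$, using $\idem_\bb = 1$ as the base case; the recursion $\idem_\T = \idem_{\ov{\T}}\,P_\T(J_n) = \idem_{\T[n-1]}\,P_{\T[n]}(J_n)$ telescopes into $\prod_{k=1}^n P_{\T[k]}(J_k)$. There is no serious obstacle here: once the expansion of $\idem_{\ov{\T}}$ as a sum over extensions is recognized, the two orthogonality-style properties of $P_\T(J_n)$ do all the work, and the commutativity of $\X_n$ lets us move factors past each other without bookkeeping.
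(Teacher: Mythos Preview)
Your proof is correct and essentially the same as the paper's: both arguments multiply $\idem_{\ov{\T}}\,P_\T(J_n)$ against the partition of unity $1=\sum_{\TS\in\Tab(n)}\idem_\TS$ and dispose of the terms using level-$(n{-}1)$ orthogonality together with \eqref{eq:P_T-one} and \eqref{eq:P_T-two}. The only difference is organizational---you first absorb the orthogonality step into the expansion $\idem_{\ov{\T}}=\sum_{\ov{\TS}=\ov{\T}}\idem_\TS$ and then apply \eqref{eq:P_T-one}, \eqref{eq:P_T-two}, whereas the paper treats all three cases $\TS=\T$, $\ov{\TS}=\ov{\T}\ne\T$, and $\ov{\TS}\ne\ov{\T}$ in parallel.
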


\begin{proof}
We prove the first equality, as the second equality follows
immediately from the first by induction on $n$.  We claim that
$\idem_{\ov{\T}} \, P_\T(J_n)$ acts the same as $\idem_\T$ on all
basis elements $\{ \idem_\TS \mid \TS \in \Tab(n) \}$ of $\X_n$. That
is,
\[
  \idem_{\ov{\T}} \, P_\T(J_n) \cdot \idem_\TS = \delta_{\T,\TS}\, \idem_\TS
\]
for any $\TS \in \Tab(n)$. There
are three cases to the claim. First, if $\TS = \T$, then by
\eqref{eq:P_T-one} we have
\[
  \idem_{\ov{\T}} \, P_\T(J_n) \cdot \idem_\T = \idem_{\ov{\T}} \,
  \idem_\T = \idem_{\ov{\T}} \, \idem_{\ov{\T}} \, \idem(\lambda) =
  \idem_\T,
\]
which proves the claim in case $\TS=\T$. Next, if $\TS \ne \T$ but
$\ov{\TS} = \ov{\T}$ then the claim is immediate from
\eqref{eq:P_T-two}. So only the case $\TS \ne \T$ and $\ov{\TS} \ne
\ov{\T}$ remains. In this case we note that $\X_{n-1}\subset \X_n$,
and so $\idem_{\ov{\T}}$ and $J_n$ commute. Then
\[
  \idem_{\ov{\T}} \, P_\T(J_n) \cdot \idem_\TS = P_\T(J_n) \,
  \idem_{\ov{\T}} \, \idem_{\ov{\TS}}\, \idem(\mu) = 0,
\]
where $\TS \mapsto \mu$. This completes the proof of the claim. The
recursion formula now follows, since the claim implies that
\[
  \idem_{\ov{\T}} \, P_\T(J_n) = \idem_{\ov{\T}} \, P_\T(J_n) \cdot 1
  = \textstyle \sum_{\TS \in \Tab(n)} \idem_{\ov{\T}} \, P_\T(J_n)
  \cdot \idem_\TS = \idem_\T,
\]
as required. 
\end{proof}

We now return to JM-sequences. Our main result (Theorem
\ref{thm:central-P-recursion}) is a recursive formula for the
primitive central idempotents analogous to Theorem
\ref{thm:idem-T-recursion}. The following lemma holds the key
ingredients. If $\T \in \Tab(\lambda)$ we say that $\T$ has type
$\lambda$ and write $\type(\T) = \lambda$.

\begin{lem}
  Assume that $(J_n \mid n \in \N)$ is a JM-sequence in a
  multiplicity-free family $\{\A_n \mid n \ge 0 \}$.  Given any $\T
  \in \Tab(n)$, the content $c_\T(n)$ depends only on $\type(\T)$ and
  $\type(\ov{\T})$. 
  In particular, the polynomial $P_\T(J_n)$ in
Definition \ref{def:P_T} depends only on $\type(\T)$,
$\type(\ov{\T})$.
\end{lem}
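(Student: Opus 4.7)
The plan is to derive both claims directly from Proposition~\ref{prop:central-J_n-property}, using only the multiplicity-free branching axiom to handle the index set appearing in $P_\T(J_n)$.

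First, since $(J_n)$ is a JM-sequence, it is in particular additively central, so Proposition~\ref{prop:central-J_n-property} applies. Write $\lambda = \type(\T)$ and $\mu = \type(\ov{\T})$. That proposition gives
\[
c_\T(n) = a_\lambda - a_\mu,
\]
where $a_\lambda$ is the scalar by which $z_n = J_1 + \cdots + J_n \in Z(\A_n)$ acts on $V^\lambda$, and $a_\mu$ is the scalar by which $z_{n-1} = J_1 + \cdots + J_{n-1} \in Z(\A_{n-1})$ acts on $V^\mu$. Both scalars are intrinsic to the isomorphism classes $\lambda$ and $\mu$, so $c_\T(n)$ depends on $\T$ only through $\type(\T)$ and $\type(\ov{\T})$, which is the first claim.

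For the ``in particular'' statement, recall that
\[
P_\T(J_n) = \prod_{\substack{\TS \in \Tab(n) \\ \TS \ne \T,\,\ov{\TS} = \ov{\T}}} \frac{J_n - c_\TS(n)}{c_\T(n) - c_\TS(n)}.
\]
The key observation is that the indexing set here depends only on $\mu = \type(\ov{\T})$. Indeed, by the multiplicity-free branching rule (Definition~\ref{def:mfa}(c)), a path $\TS \in \Tab(n)$ with $\ov{\TS} = \ov{\T}$ is determined by specifying the last vertex, a type $\nu \in \Irr(n)$ with $\mu \to \nu$ in $\BG$. Consequently the indexing set of the product is in bijection with $\{\nu \in \Irr(n) : \mu \to \nu,\ \nu \ne \lambda\}$, which depends only on the pair $(\lambda, \mu)$.

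Now apply the first part to each factor: for each such $\TS$ with terminal vertex $\nu$, we have $c_\TS(n) = a_\nu - a_\mu$, and the denominator is $c_\T(n) - c_\TS(n) = a_\lambda - a_\nu$. Thus every numerator and denominator in the product is determined by $(\lambda, \mu)$, and $P_\T(J_n)$ depends only on $\type(\T)$ and $\type(\ov{\T})$. No step is technically hard; the only point worth flagging is that one must invoke the multiplicity-free branching to identify the indexing paths $\TS$ with types $\nu \ne \lambda$ covering $\mu$, rather than with arbitrary extensions of $\ov{\T}$.
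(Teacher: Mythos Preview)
Your proof is correct and follows the same approach as the paper: both parts are derived from Proposition~\ref{prop:central-J_n-property}, with the second following from the first and the definition of $P_\T$. The paper's proof is terser (two sentences), whereas you spell out the re-parametrization of the indexing set $\{\TS : \ov{\TS} = \ov{\T},\ \TS \ne \T\}$ by types $\nu$ with $\mu \to \nu$, $\nu \ne \lambda$; this is exactly the content the paper leaves implicit. One small wording quibble: the literal indexing set of paths does depend on the specific $\ov{\T}$, not just on $\mu$; what depends only on $(\lambda,\mu)$ is the resulting multiset of values $c_\TS(n)$ (equivalently, the set of types $\nu$), which is what you actually use.
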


\begin{proof}
The first statement follows immediately from Proposition
\ref{prop:central-J_n-property}.  The second is immediate from the
first and the definition of the $P_\T$.
\end{proof}

Hence, the following notation is well-defined. 

\begin{defn}\label{def:P-lambda-mu}
Suppose $(J_n \mid n \in \N)$ is a JM-sequence and $\T \in
\Tab(n)$. If $\lambda = \type(\T)$ and $\mu = \type(\ov{\T})$ then we
write $P^\lambda_\mu(J_n) = P_\T(J_n)$.
\end{defn}

We now arrive at the promised recursive description of the central
idempotents $\idem(\lambda)$. 

\begin{thm}\label{thm:central-P-recursion}
Assume that $(J_n \mid n \in \N)$ is a JM-sequence in a multiplicity-free
family. For any $\lambda \in \Irr(n)$, we have
\[
\idem(\lambda) \, = \, \sum_{\mu} \,P^\lambda_\mu(J_n)\cdot \idem(\mu),
\]
where $\mu$ varies over the set of immediate predecessors of $\lambda$
in the branching graph $\BG$.  
\end{thm}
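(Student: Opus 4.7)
My plan is to combine Corollary \ref{cor:idem-characterization}(a), the recursion for $\idem_\T$ in Theorem \ref{thm:idem-T-recursion}, and Definition \ref{def:P-lambda-mu} (well-definedness of $P^\lambda_\mu(J_n)$). The strategy is to expand $\idem(\lambda)$ as a sum over paths $\T \mapsto \lambda$, apply the recursion $\idem_\T = \idem_{\ov{\T}}\, P_\T(J_n)$, and then group the resulting terms according to the predecessor $\mu = \type(\ov{\T})$.

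First, by Corollary \ref{cor:idem-characterization}(a), we have $\idem(\lambda) = \sum_{\T \mapsto \lambda} \idem_\T$. Substituting the recursion $\idem_\T = \idem_{\ov{\T}}\, P_\T(J_n)$ from Theorem \ref{thm:idem-T-recursion} gives
\[
\idem(\lambda) = \sum_{\T \mapsto \lambda} \idem_{\ov{\T}}\, P_\T(J_n).
\]
Now partition the sum according to the type of $\ov{\T}$: every $\T \mapsto \lambda$ restricts to some $\ov{\T} \mapsto \mu$ for a unique immediate predecessor $\mu$ of $\lambda$ in $\BG$, and conversely every path $\TS \mapsto \mu$ extends uniquely (via the edge $\mu \to \lambda$) to a path $\T \mapsto \lambda$ with $\ov{\T} = \TS$. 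Hence
\[
\idem(\lambda) = \sum_{\mu}\, \sum_{\substack{\T \mapsto \lambda\\ \type(\ov{\T})=\mu}} \idem_{\ov{\T}}\, P_\T(J_n).
\]

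By Definition \ref{def:P-lambda-mu}, for every $\T$ in the inner sum we have $P_\T(J_n) = P^\lambda_\mu(J_n)$, a common factor. Moreover, $P^\lambda_\mu(J_n)$ is a polynomial in $J_n \in \X_n$ while $\idem_{\ov{\T}} \in \X_{n-1} \subset \X_n$, and $\X_n$ is commutative, so these factors commute and $P^\lambda_\mu(J_n)$ can be pulled outside. Using the bijection from the previous paragraph together with Corollary \ref{cor:idem-characterization}(a) applied at level $n-1$,
\[
\sum_{\substack{\T \mapsto \lambda\\ \type(\ov{\T})=\mu}} \idem_{\ov{\T}} \; =\; \sum_{\TS \mapsto \mu} \idem_\TS \;=\; \idem(\mu).
\]
Combining yields the desired formula $\idem(\lambda) = \sum_\mu P^\lambda_\mu(J_n)\cdot \idem(\mu)$.

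No step here looks difficult. The only conceptual point to watch is verifying that $P^\lambda_\mu(J_n)$ is genuinely a common scalar factor across the paths being grouped (handled by the preceding lemma on the dependence of $P_\T(J_n)$ only on $\type(\T)$ and $\type(\ov{\T})$) and that the factor commutes with the $\idem_{\ov{\T}}$ (handled by $\X_{n-1} \subset \X_n$). No new ideas beyond the previously established recursion are required.
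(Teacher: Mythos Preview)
Your proof is correct and follows essentially the same approach as the paper's own proof: expand $\idem(\lambda)$ as a sum of $\idem_\T$, apply the recursion from Theorem~\ref{thm:idem-T-recursion}, group by the predecessor $\mu=\type(\ov{\T})$, and use the unique-extension bijection to recognize the inner sum as $\idem(\mu)$. You are slightly more explicit than the paper about the commutativity needed to move $P^\lambda_\mu(J_n)$ past $\idem_{\ov{\T}}$, but otherwise the arguments are identical.
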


\begin{proof}
We have $\idem(\lambda) = \sum_{\type(\T) = \lambda} \idem_\T$. By
Theorem \ref{thm:idem-T-recursion} and the above lemma, we
have
\[
  \idem(\lambda) = \sum_{\T\colon \type(\T) = \lambda} \idem_{\ov{\T}} \,
  P_\T(J_n) = \sum_{\mu} \bigg(
  \sum_{\substack{\T \colon \type(\T) = \lambda, \\ \type(\ov{\T}) = \mu}}
  \idem_{\ov{\T}} \bigg) \, P^\lambda_\mu(J_n). 
\]
To complete the proof, it suffices to show that 
\[
\sum_{\substack{\T\colon \type(\T)=\lambda, \\
  \type(\ov{\T}) = \mu}} \idem_{\ov{\T}} = \idem(\mu). 
\]
This conclusion is justified since any path $\ov{\T} \in \Tab(n{-}1)$ of
type $\mu$ extends uniquely to a path $\T \in \Tab(n)$ of type
$\lambda$ by the branching rule \ref{def:mfa}(c). Thus, the sum on the
left hand side above is a complete sum over all paths in $\Tab(n{-}1)$
of type $\mu$. The result follows.
\end{proof}

\section{Application: symmetric group algebras}\label{sec:symm-gp}
\noindent
Let $\Sym_n$ be the symmetric group on $n$ letters and $\Bbbk$ a field
of characteristic zero.  It is well known that the family
$\{\Bbbk \Sym_n \mid n \ge 0 \}$ is multiplicity-free (we take
$\Bbbk \Sym_0 = \Bbbk$); see \cite{VO}*{Theorem~2.1} for a proof of
this fact from first principles. Indeed, this multiplicity-free family
is the motivating example for our paper.

Vershik and Okounkov \cite{VO} give a complete and compelling account
of the representation theory of symmetric groups from the
multiplicity-free inductive viewpoint. In particular, they
\begin{itemize}
\item Compute the spectrum of the Young--Jucys--Murphy generators.
\item Show that the set of standard tableaux with $n$ boxes is in
  bijection with the set of all paths in the branching graph of length
  $n$; this also proves the branching rule.
\item Construct Young's seminormal and (when $\Bbbk = \C$) orthogonal
  forms for the irreducible representations.
\item Compute the irreducible characters (Murnaghan--Nakayama rule).
\end{itemize}
We cannot improve upon their story. But our story is about idempotents
in multiplicity--free families, so we are content to explain just
enough representation theory to be able to compute the
canonical idempotents 
\[
  \{ \idem_\T \mid \T \text{ a standard tableau with $n$ boxes}\}
\]
constructed in Definition \ref{def:idem-T}. We also
show that these idempotents coincide with the classical seminormal
idempotents constructed by Young.

We work with right modules in this section, in deference to
Schur--Weyl duality, discussed in the first paragraph of Section
\ref{sec:Brauer}.  Writing $i^\sigma$ for the image of $i$ under a
permutation $\sigma$, we define the product $\sigma\tau$ of two
permutations by $i^{\sigma\tau} = (i^\sigma)^\tau$, in order that
products of permutations agree with products of their Brauer diagrams.
We take for granted that the partitions of $n$ index the isomorphism
classes of irreducible representations and the set of standard
tableaux with $n$ boxes is in bijection with the set of all paths in
the branching graph of length $n$.  Under this bijection, the path
$\ov{\T}$ as defined in Section \ref{sec:mfa} corresponds to the
standard tableau $\ov{\T}$ obtained from the standard tableau $\T$ by
discarding the box containing the number $n$.  The labeled branching
graph for this family is depicted in Figure \ref{Bratteli-Sym}.

\begin{figure}[ht]
\begin{center}
\begin{tikzpicture}[xscale=3*\UNIT, yscale=-4.75*\UNIT]
	\coordinate (0) at (0,.6);
	\coordinate (11) at (0,1.4);
	\coordinate (21) at (-.90,2.1);\coordinate (22) at (.90,2.1);
	\foreach \x in {1, ..., 4}{\coordinate (3\x) at (-4+2*\x,3);}
	\foreach \x in {1, ..., 8}{\coordinate (4\x) at (-6+2*\x,4.25);}
	\foreach \x in {1, ..., 11}{\coordinate (5\x) at (-8.55+2.2*\x,5.6);}
\draw (0)--node[right, yshift=0ex, color=myred]{\tiny$0$} (11);
\draw (11)--node[right, yshift=-0.32ex, color=myred]{\tiny$1$} (21) (11)--node[right, pos=.32, color=myred]{\tiny$-1$} (22);
\draw (21)--node[right, pos=.57, color=myred]{\tiny$2$} (31) (21)--node[right, pos=.32, color=myred]{\tiny$-1$} (32);
\draw (22)--node[right, pos=.57, color=myred]{\tiny$1$} (32) (22)--node[right, pos=.32, color=myred]{\tiny$-2$} (33);
\draw (31)--node[right, pos=.57, xshift=.2ex, color=myred]{\tiny$3$} (41) (31)--node[right, pos=.42, color=myred]{\tiny$-1$} (42);
\draw (32)--node[right, pos=.52, xshift=.2ex, color=myred]{\tiny$2$} (42) (32)--node[right, pos=.37, color=myred]{\tiny$0$} (43) (32)--node[right, pos=.42, color=myred]{\tiny$-2$} (44);
\draw (33)--node[right, pos=.52, color=myred]{\tiny$1$} (44) (33)--node[right, pos=.52, color=myred]{\tiny$-3$} (45);
\draw (41)--node[right, pos=.40, color=myred]{\tiny$4$} (51) (41)--node[right, pos=.40, color=myred]{\tiny$-1$} (52);
\draw (42)--node[right, pos=.58, color=myred]{\tiny$3$} (52) (42)--node[right, pos=.58, color=myred]{\tiny$0$} (53) (42)--node[right, pos=.58, color=myred]{\tiny$-2$} (54);
%
\draw (43)--node[right, pos=.28, color=myred]{\tiny$2$} (53) (43)--node[right, pos=.25, color=myred]{\tiny$-2$} (55);
%
\draw (44)--node[right, pos=.68, color=myred]{\tiny$2$} (54) (44)--node[right, pos=.55, color=myred]{\tiny$0$} (55) (44)--node[right, pos=.48, color=myred]{\tiny$-3$} (56);
\draw (45)--node[right, pos=.40, color=myred]{\tiny$1$} (56) (45)--node[right, pos=.40, color=myred]{\tiny$-4$} (57);
\begin{scope}[every node/.style={fill=white}]
	\node at (0) {\footnotesize$\bb$};
	\node at (11) {\tPART{1}};

	\node at (22) {\tPART{1,1}};
	\node at (21) {\tPART{2}}; 

	\node at (33) {\tPART{1,1,1}};
	\node at (32) {\tPART{2,1}};
	\node at (31) {\tPART{3}};

	\node at (45) {\tPART{1,1,1,1}};
	\node at (44) {\tPART{2,1,1}};
	\node at (43) {\tPART{2,2}};
	\node at (42) {\tPART{3,1}};
	\node at (41) {\tPART{4}};

	\node at (57) {\tPART{1,1,1,1,1}};
	\node at (56) {\tPART{2,1,1,1}};
	\node at (55) {\tPART{2,2,1}};
	\node at (54) {\tPART{3,1,1}};
	\node at (53) {\tPART{3,2}};
	\node at (52) {\tPART{4,1}};
	\node at (51) {\tPART{5}};
\end{scope}
\foreach \x in {-5.1,0,5.1} {\node at (\x , 6.2) {$\vdots$}; }
\end{tikzpicture}
\end{center}
\caption{Branching graph for the multiplicity-free family $\{\Bbbk \Sym_n\}$. 
The edge labels are computed in Proposition \ref{prop:c_T-symm}.}
\label{Bratteli-Sym}
\end{figure}
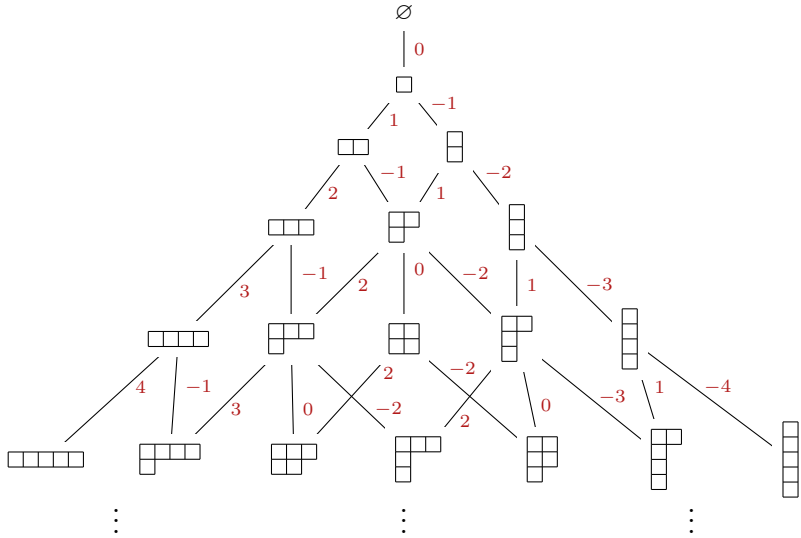

Young's construction of the irreducible representations is in terms of
the so-called Young symmetrizers. Let us recall the definitions; see
e.g., \cite{Fulton}.

\begin{defn}
  Given a tableau $\T$ of $n$ boxes, let $\mathsf{R}(\T)$ be the
  subgroup of $\Sym_n$ consisting of all $w$ which stabilize the rows
  of $\T$; similarly, let $\mathsf{C}(\T)$ be the subgroup of $\Sym_n$
  consisting of all $w$ which stabilize the columns of $\T$. Put
  \[
    \fraka_\T = \textstyle \sum_{w \in \mathsf{R}(\T)} w, \qquad
    \frakb_\T = \textstyle \sum_{w \in \mathsf{C}(\T)} \sgn(w)\,w .
  \]
  The sums $\fraka_\T, \frakb_\T$ and their products
  $\fraka_\T\frakb_\T$, $\frakb_\T\fraka_\T$ taken in either order
  are called Young symmetrizers. We put 
  \[
   \yy_\T = \frac{1}{h(\lambda)} \fraka_\T\frakb_\T =
   \frac{1}{h(\lambda)} \sum_{v \in \mathsf{R}(\T)} v \sum_{w \in
     \mathsf{C}(\T)} \sgn(w)\,w,
  \]
  where $h(\lambda) = \frac{n!}{n(\lambda)}$ and $n(\lambda)$ is the
  number of standard tableaux of shape $\lambda$. (Then $h(\lambda)$
  is equal to the product of all the hook lengths in $\T$; this
  depends only on the shape $\lambda$ of $\T$.)
\end{defn}

If $w \in \Sym_n$ and $\T$ is a tableau with $n$ boxes then $w \cdot
\T$ is the tableau obtained by replacing each number $i \in \T$ by
$w(i)$, for $i = 1, \dots, n$. We have $\mathsf{R}(w \cdot \T) = w
\mathsf{R}(\T) w^{-1}$ and $\mathsf{C}(w \cdot \T) = w \mathsf{C}(\T)
w^{-1}$. Thus
\begin{equation}\label{eq:wT-action}
  \fraka_{w \cdot \T} = w \fraka_\T w^{-1}, \quad \frakb_{w \cdot \T} =
  w \frakb_\T w^{-1}
\end{equation}
for any $w \in \Sym_n$. 
The $\yy_\T$ are idempotents in $\Bbbk \Sym_n$; these idempotents
are sometimes called \emph{Young's idempotents}. The right ideal
\begin{equation}
  S^\lambda = \yy_\T \Bbbk \Sym_n   
\end{equation}
is an irreducible $\Bbbk \Sym_n$-module, where $\T$ is any standard
tableau of shape $\lambda$.  It is known that $\yy_\T \Bbbk \Sym_n
\cong \yy_{\T'} \Bbbk \Sym_n$ if and only if $\T, \T'$ have the same
shape, so the isomorphism type of the right ideal $\yy_\T \Bbbk
\Sym_n$ depends only on the shape of $\T$. It is also well known (see
e.g., \cite{Curtis-Reiner}*{\S28}) that
\begin{equation}\label{eq:y_Tdecomp}
  \Bbbk \Sym_n \ = \bigoplus_{\T} \yy_\T \Bbbk \Sym_n,
\end{equation}
where the sum is taken over the set of all standard tableaux $\T$ of
$n$ boxes. This is a decomposition as a direct sum of simple right
ideals, but unfortunately the family $\{\yy_\T\}$ of primitive
idempotents is \emph{not} pairwise orthogonal,\footnote{\,Not quite, but
  almost! It can be shown (see e.g., \cite{Stembridge}*{Prop.~1}) that
  for each pair $\T, \T'$ of standard tableaux of $n$ boxes, at least
  one of the products $\yy_\T \cdot \yy_{\T'}$, $\yy_{\T'} \cdot
  \yy_\T$ must be zero.} as already noted by Young; see
\cite{Stembridge} for an explicit counterexample.

We put $z_n$ equal to the formal sum of all transpositions in
$\Sym_n$, regarded as an element of $\Bbbk \Sym_n$.  This conjugacy
class sum is an element of the center of $\Bbbk \Sym_n$ for each
$n$. We wish to show that the elements
\begin{equation}
  J_n = z_n - z_{n-1} = (1,n) + (2,n) + \cdots + (n-1,n)
\end{equation}
(written in the cycle notation for permutations) define a JM-sequence
in the sense of Definition \ref{def:JM-defn}. 

\begin{prop}\label{prop:z_n-Sym}
  Let $\lambda$ be a partition of $n$ and $\T$ a standard tableau of
  shape $\lambda$. Then the central element $z_n$ acts by right
  multiplication on $y_\T \Bbbk \Sym_n$ as the scalar
  $a_\lambda = \xi(\lambda) - \xi(\lambda')$, where $\lambda'$ is the
  transpose of $\lambda$ and
  $\xi(\lambda) = \sum \binom{\lambda_m}{2}$, summed over the parts
  $\lambda = (\lambda_1, \lambda_2, \dots)$ of $\lambda$.
\end{prop}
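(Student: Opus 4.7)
Since $z_n$ is a conjugacy-class sum (of all transpositions in $\Sym_n$), it lies in $Z(\Bbbk\Sym_n)$. Hence by Schur's lemma it acts on the irreducible right module $S^\lambda = \yy_\T\,\Bbbk\Sym_n$ as some scalar, which we denote $a_\lambda$. The problem reduces to identifying that scalar.

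The plan is to recover $a_\lambda$ by a trace computation on $S^\lambda$. Setting $f^\lambda = \dim_\Bbbk S^\lambda$ and writing $\chi^\lambda$ for the irreducible character of $\Sym_n$ indexed by $\lambda$, we have
\[
a_\lambda\, f^\lambda \,=\, \trace_{S^\lambda}(z_n) \,=\, \binom{n}{2}\,\chi^\lambda((1,2)),
\]
since all $\binom{n}{2}$ transpositions are $\Sym_n$-conjugate and hence take the same character value. Thus $a_\lambda = \binom{n}{2}\,\chi^\lambda((1,2))/f^\lambda$. I would then invoke the classical content formula (a consequence of Frobenius's character formula; see e.g.\ \cite{Fulton}), which identifies this normalized character value with the sum of contents $\sum_{(i,j)\in\lambda}(j-i)$, taken over the boxes of the Young diagram of $\lambda$.

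Finally, I would verify the arithmetic identity $\sum_{(i,j)\in\lambda}(j-i) = \xi(\lambda) - \xi(\lambda')$ directly: summing $\sum_{j=1}^{\lambda_i}(j-1) = \binom{\lambda_i}{2}$ over the rows $i$ gives $\sum_{(i,j)\in\lambda}(j-1) = \xi(\lambda)$, while the same calculation with rows and columns interchanged gives $\sum_{(i,j)\in\lambda}(i-1) = \xi(\lambda')$; subtraction yields the claim.

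The main obstacle is the classical content formula itself, which is non-trivial. A self-contained alternative avoiding characters is to compute $z_n\,\yy_\T$ directly, splitting $z_n=\sum_{i<j}(i,j)$ according to whether $i,j$ lie in the same row of $\T$, the same column, or neither; the first two subsums contribute $\xi(\lambda)$ and $-\xi(\lambda')$ respectively, by the identities $(i,j)\fraka_\T = \fraka_\T$ for $(i,j)\in\mathsf{R}(\T)$ and $\frakb_\T(i,j) = -\frakb_\T$ for $(i,j)\in\mathsf{C}(\T)$, and one must then show that the remaining mixed transpositions cancel via a Garnir-style pairing argument, which is the delicate combinatorial step.
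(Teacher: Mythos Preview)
Your primary route via the trace and the Frobenius content formula is logically valid, but it imports a result that is essentially equivalent to the proposition itself; indeed, the paper \emph{derives} the content interpretation $a_\lambda=\sum_{(i,j)\in\lambda}(j-i)$ from this proposition in the paragraph that follows it. The paper's own argument is considerably more elementary and self-contained: starting from $\yy_\T\, z_n=a_\lambda\,\yy_\T$, one simply compares the coefficient of the identity permutation on both sides. On the right this coefficient is $a_\lambda$ (since $\alpha\beta=1$ with $\alpha\in\mathsf{R}(\T)$, $\beta\in\mathsf{C}(\T)$ forces $\alpha=\beta=1$). On the left one counts triples $(\alpha,\beta,(i,j))$ with $\alpha\beta=(i,j)$; a short argument shows that if $\alpha\beta$ is a transposition then $\alpha$ and $\beta$ each fix every $k\ne i,j$, so one of them is the identity and the other equals $(i,j)$, forcing $i,j$ to share a row (contributing $+1$) or a column (contributing $-1$). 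The signed count is exactly $\xi(\lambda)-\xi(\lambda')$, with no character theory or Garnir relations required.

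Your sketched alternative has a genuine gap as written. The identities $(i,j)\fraka_\T=\fraka_\T$ for $(i,j)\in\mathsf{R}(\T)$ and $\frakb_\T(i,j)=-\frakb_\T$ for $(i,j)\in\mathsf{C}(\T)$ act on \emph{opposite} sides of $\yy_\T=\fraka_\T\frakb_\T/h(\lambda)$, so they cannot both be invoked in a single computation of $z_n\,\yy_\T$ (or of $\yy_\T\,z_n$). If you work on the left, the row transpositions do contribute $\xi(\lambda)\,\yy_\T$, but $(i,j)\,\yy_\T$ for a column transposition is \emph{not} $-\yy_\T$: you cannot move $(i,j)$ past $\fraka_\T$. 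Thus the ``column'' contribution is entangled with the ``mixed'' one, and the Garnir-style cancellation you anticipate for the mixed terms alone is not what is actually needed. The paper's coefficient-of-identity trick sidesteps this difficulty entirely.
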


\begin{proof}
  Since $z_n$ is in the center of $\Bbbk \Sym_n$, we know there is a
  scalar $a_\lambda \in \Bbbk$ with $v \cdot z_n = a_\lambda\, v$, for
  all $v \in \yy_\T \Bbbk \Sym_n$.  In particular,
  $y_\T \cdot z_n = a_\lambda \, y_\T$. By definition of $y_\T$, this
  equality becomes
  \[
    \sum_{\alpha \in \mathsf{R}(\T)} \sum_{\beta \in \mathsf{C}(\T)}
    \sgn(\beta)\, \alpha \beta \ \cdot\! \sum_{1 \le i < j \le n} (i,j) 
    \ = \   
    a_\lambda \, \sum_{\alpha \in \mathsf{R}(\T)} \sum_{\beta \in
      \mathsf{C}(\T)} \sgn(\beta)\, \alpha \beta,
  \]
  where $(i,j)$ denotes the transposition interchanging $i$ and 
  $j$. To compute $a_\lambda$ we compare coefficients of the identity
  permutation on both sides of the equation, which gives
  \[
    \sum_{\alpha \in \mathsf{R}(\T)} \sum_{\beta \in \mathsf{C}(\T)}
    \sum_{1 \le i < j \le n}  \sgn(\beta)\, \delta_{\alpha\beta,
      (i,j)} = a_\lambda,
  \]
  where $\delta_{\sigma,\tau} = 1$ if $\sigma = \tau$ and $0$
  otherwise (for $\sigma, \tau \in \Sym_n$). It is easy to see that
  $\delta_{\alpha\beta, (i,j)} = 0$ unless $i$ and $j$ lie in the same
  row or column of $\T$, since otherwise the product $\alpha\beta$
  must change more than just $i$ and $j$. So we are reduced to
  counting solutions of the equation $\alpha\beta = (i,j)$ of the form
  $\alpha = (i,j)$ where $i,j$ lie in the same row of $\T$ or
  $\beta = (i,j)$ where $i,j$ lie in the same column of $\T$.  In
  other words, we need to count the number of pairs $(i,j)$ with $i<j$
  in a row of $\T$, and, with opposite sign, the number of pairs
  $(i,j)$ with $i<j$ in a column of $\T$.  This gives the desired
  result $a_\lambda = \xi(\lambda) - \xi(\lambda')$.
\end{proof}

Here is a combinatorial procedure for computing the statistic
$\xi(\lambda)$ for a given shape $\lambda$. Insert the numbers
$0, 1, 2, \dots, \lambda_m-1$ in order into the $m$th row of the
diagram of shape $\lambda$, for each $m$. Then clearly $\xi(\lambda)$
is equal to the sum of the numbers in the boxes. Note that the
insertion process just described is equivalent to inserting a $j-1$ in
each box of the $j$th column of the diagram. So $\xi(\lambda)$ is the
sum of all the numbers in this numbering.

On the other hand, if we insert $i-1$ in each box of the $i$th row of the
diagram of shape $\lambda$, then $\xi(\lambda')$ is the sum of all the
numbers in this numbering, where $\lambda'$ is the transpose of $\lambda$. 

This implies that if we attach the statistic $(j-1) - (i-1) = j-i$ to
the box in row $i$ and column $j$ in $\T$ (this statistic is called
the \emph{content} of the box) then the sum of all the statistics is
$a_\lambda = \xi(\lambda) - \xi(\lambda')$. Thus, we see that
\begin{equation}\label{eq:a_lambda-sum}
  a_\lambda = \textstyle \sum_{(i,j)} j-i\,,
\end{equation}
where the sum is taken over the positions $(i,j)$ indexing all the
boxes in the diagram of shape $\lambda$. This interpretation of
$a_\lambda$ will be used to prove the following result.

\begin{prop}\label{prop:c_T-symm}
  Suppose that $\T\in\Tab(n)$ has shape $\lambda$. For any
  $1\leq k\leq n$, the eigenvalue $c_\T(k)$ of the action of $J_k$ on
  the Gelfand--Tsetlin basis element $v_\T$ indexed by $\T$ is the
  content $j-i$, where the box containing $k$ is located in row $i$
  and column $j$ in the tableau $\T$.
\end{prop}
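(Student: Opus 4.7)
The plan is to combine Proposition \ref{prop:central-J_n-property}, Proposition \ref{prop:z_n-Sym}, and the combinatorial interpretation \eqref{eq:a_lambda-sum} of $a_\lambda$ as a sum of contents. First I would observe that the sequence $(J_k)_{k\in\N}$ defined by $J_k = z_k - z_{k-1}$ is additively central by construction (its $k$th partial sum is $z_k \in Z(\Bbbk\Sym_k)$), so Proposition \ref{prop:central-J_n-property} applies with the scalar $a_\lambda$ being the eigenvalue of $z_n$ on $V^\lambda$. Hence
\[
c_\T(k) \;=\; a_{\lambda^{(k)}} - a_{\lambda^{(k-1)}},
\]
where $\lambda^{(k)}$ is the shape of the $k$th vertex along the path $\T$ in the branching graph.

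Next I would invoke Proposition \ref{prop:z_n-Sym} together with the reformulation \eqref{eq:a_lambda-sum}, which states that $a_\nu = \sum_{(i,j)\in\nu}(j-i)$, the sum of the contents of all boxes in the Young diagram of $\nu$. Under the bijection between standard tableaux with $n$ boxes and paths of length $n$ in the branching graph (which we took for granted), the diagram of $\lambda^{(k)}$ is obtained from that of $\lambda^{(k-1)}$ by adding a single box, namely the box of $\T$ containing the entry $k$. Therefore the difference $a_{\lambda^{(k)}} - a_{\lambda^{(k-1)}}$ telescopes to the content $j-i$ of that single new box.

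Stringing these together yields $c_\T(k) = j - i$, which is exactly the claim. The only point requiring care is to match the combinatorics correctly: one needs that the last vertex of the path $\T$ (with $n$ replaced by $k$) corresponds to the sub-tableau of $\T$ on the entries $\{1,2,\dots,k\}$, so that $\lambda^{(k)} \smallsetminus \lambda^{(k-1)}$ is indeed the cell occupied by $k$. This is the defining property of the bijection between standard tableaux and paths in the Young lattice, so no real obstacle arises; the argument is essentially a telescoping computation once Proposition \ref{prop:z_n-Sym} is in hand.
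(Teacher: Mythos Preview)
Your proposal is correct and follows essentially the same route as the paper: additive centrality of $(J_k)$, Proposition~\ref{prop:central-J_n-property}, and the content-sum formula~\eqref{eq:a_lambda-sum} combine to give a telescoping computation. The paper packages this as an induction on $n$, handling $k<n$ via Proposition~\ref{prop:GJM-separation}(a) (i.e., $c_\T(k)=c_{\ov\T}(k)$) and then treating $c_\T(n)$ directly; you instead apply Proposition~\ref{prop:central-J_n-property} at each level $k$ to the truncated path $\T[k]$.

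The one point you glide over is that Proposition~\ref{prop:central-J_n-property}, as stated, only computes the \emph{last} coordinate $c_{\T'}(m)$ for a path $\T'$ of length $m$. To use it for $c_\T(k)$ with $k<n$ you need the compatibility $c_\T(k)=c_{\T[k]}(k)$; this is exactly Proposition~\ref{prop:GJM-separation}(a) (whose proof, incidentally, uses only the recursive description of $\idem_\T$ and not the separating hypothesis, so there is no circularity). Your final paragraph addresses the combinatorial bijection between $\T[k]$ and the sub-tableau on $\{1,\dots,k\}$, but not this eigenvalue compatibility; it would be worth making that step explicit.
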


\begin{proof}
We proceed by induction on $n$. For $n=1$ the result is clear:
$c_\T(1) = 0$ as $J_1 = 0$. Let $n > 1$ and let $\T \in \Tab(n)$ be a
standard tableau of shape $\lambda$, some partition of $n$. By the
inductive hypothesis, $c_{\ov{\T}}(k)$ has the desired value for any
$k \le n-1$. By Proposition \ref{prop:GJM-separation}(a), $c_\T(k) =
c_{\ov{\T}}(k)$ for all $k<n$, so $c_\T(k)$ has the desired value for
all $k<n$. Thus, it suffices to compute the value $c_\T(n)$.

By Proposition \ref{prop:central-J_n-property} we have $c_\T(n) =
a_\lambda - a_\mu$, where $\ov{\T} \mapsto \mu$. By equation
\eqref{eq:a_lambda-sum}, it follows that $c_\T(n) = a_\lambda - a_\mu
= j-i$, where the box in $\T$ containing $n$ occurs in position
$(i,j)$. The result is proved.
\end{proof}

\begin{rmk}\label{rmk:addable}
If we record the statistic $j-i$ in each box $(i,j)$ of the Young
diagram of shape $\lambda$, then the resulting tableau is constant
along diagonals. Recall that a box in a Young diagram of shape
$\lambda$ is \emph{removable} if excising it results in another Young
diagram. Similarly, a box not in the shape $\lambda$ is \emph{addable}
if including it results in a Young diagram. Since removable boxes are
always the last box in their row or column, it is clear that no two
removable boxes in $\lambda$ can lie on the same diagonal. The same
conclusion applies to addable boxes. Hence, no two removable (or
addable) boxes for a shape $\lambda$ can have the same content. This is
needed in the proof of Corollaries \ref{cor:Sym_n-separation} and
\ref{cor:Brauer-JM-sep}.
\end{rmk}

\begin{cor}\label{cor:Sym_n-separation}
  The sequence $(J_k \mid k \in \N)$ is a JM-sequence in the sense of
  Definition \ref{def:JM-defn}.
\end{cor}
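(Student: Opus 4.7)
The plan is to verify the two defining conditions of Definition \ref{def:JM-defn} in turn, both of which follow readily from material already assembled in the section.

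First, additive centrality is essentially immediate from the construction. By definition $J_k = z_k - z_{k-1}$ (with $z_0 := 0$), so the telescoping partial sum satisfies $J_1 + J_2 + \cdots + J_n = z_n$. Since $z_n$ is a conjugacy class sum, it lies in $Z(\Bbbk\Sym_n)$, confirming additive centrality for every $n$.

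For separation, I will invoke Proposition \ref{prop:separation-generation-equivalent} and show that the content vector $c_\T = (c_\T(1),\dots,c_\T(n))$ uniquely determines the standard tableau $\T \in \Tab(n)$. By Proposition \ref{prop:c_T-symm}, $c_\T(k)$ equals the content $j - i$ of the box of $\T$ containing $k$, so the content vector records, in the order $1,2,\ldots,n$, the diagonals on which successive boxes are added to build up $\T$. The plan is to argue by induction on $n$: given $c_\T$, at step $k$ the shape $\lambda_k$ is obtained from $\lambda_{k-1}$ by adjoining an addable box of content $c_\T(k)$. By Remark \ref{rmk:addable}, distinct addable boxes to the same shape have distinct contents, so the box added at step $k$ (and hence $\lambda_k$, together with the location of the entry $k$) is unambiguously recovered from $\lambda_{k-1}$ and $c_\T(k)$. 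Starting from $\lambda_0 = \bb$, this reconstructs $\T$ entirely from $c_\T$. Thus $c_\TS = c_\T$ forces $\TS = \T$, and Proposition \ref{prop:separation-generation-equivalent} then gives $\gen{J_1,\dots,J_n} = \X_n$, i.e., the sequence is separating.

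There is no real obstacle here, because all the substantive work has been done earlier: Proposition \ref{prop:c_T-symm} identifies the content vector with the geometric contents of boxes, and Remark \ref{rmk:addable} supplies exactly the combinatorial fact needed to make the inductive reconstruction deterministic. The only delicate point to flag is making sure the induction is phrased in terms of paths (equivalently, standard tableaux together with the sequence of intermediate shapes) rather than merely the final shape $\lambda$, since two distinct paths can of course share the same terminal shape.
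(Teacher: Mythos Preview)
Your proposal is correct and follows essentially the same route as the paper's proof: both establish additive centrality from the telescoping identity $J_1+\cdots+J_n=z_n$ with $z_n$ central, and both verify separation via Proposition~\ref{prop:separation-generation-equivalent} by combining Proposition~\ref{prop:c_T-symm} with Remark~\ref{rmk:addable}. The only cosmetic difference is that the paper argues the contrapositive (find the first $k$ where $\TS[k]\neq\T[k]$ and deduce $c_\TS(k)\neq c_\T(k)$), whereas you phrase it as an inductive reconstruction of $\T$ from $c_\T$; these are equivalent and use identical ingredients.
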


\begin{proof}
  Since $J_k = z_k - z_{k-1}$ and $z_k \in Z(\Bbbk \Sym_k)$ for all
  $1\leq k\leq n$, it follows that each $J_k\in\X_n$ and that
  $(J_k)_{k\in\N}$ is additively central. We use Proposition
  \ref{prop:separation-generation-equivalent} to verify that it is
  also a separating sequence.  Proposition \ref{prop:c_T-symm}
  computes the content vectors $c_\T = (c_\T(1), \dots, c_\T(n))$ for
  each $\T \in \Tab(n)$. 

  Let $\T[k]$ denote the standard tableau obtained from $\T \in
  \Tab(n)$ by removing all boxes containing numbers larger than
  $k$. Assume that $\TS\neq\T$.  We show $c_\TS\neq c_\T$. Find the
  smallest $k \leq n$ at which the tableaux $\TS, \T$ differ. That is,
  $\TS[k-1] = \T[k-1]$, yet $\TS[k] \ne \T[k]$.  By Remark
  \ref{rmk:addable}, the contents of the addable boxes yielding
  $\TS[k]$ and $\T[k]$ differ.  Appealing to Proposition
  \ref{prop:c_T-symm}, we conclude that $c_\TS(k) \neq c_\T(k)$. This
  completes the proof.
\end{proof}

For the sake of completeness, we give another formula for the central idempotent
$\idem(\lambda)$ in terms of Young symmetrizers; it was obtained by Young 
in his first two papers, published
in 1900 and 1901. (See \cite{Curtis}*{Ch.~II, \S5} for an historical
account of these developments.)

\begin{prop}[Young]
  For each $\lambda \vdash n$, $\idem(\lambda) = \frac{1}{h(\lambda)}
  \sum_\T \yy_\T$, where the sum is taken over all 
  tableaux $\T$ (not necessarily standard) of shape $\lambda$.
\end{prop}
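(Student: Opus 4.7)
The plan is to show that $E_\lambda := \tfrac{1}{h(\lambda)}\sum_\T \yy_\T$ is central and then use Proposition \ref{prop:char} to identify its eigenvalue on each simple right module $S^\mu$ as $\delta_{\lambda,\mu}$. Fix a standard tableau $\T_0$ of shape $\lambda$. Every filling of the Young diagram of shape $\lambda$ by $1,\ldots,n$ has the form $w\cdot\T_0$ for a unique $w\in\Sym_n$, so \eqref{eq:wT-action} gives
\[
\yy_{w\cdot\T_0} \;=\; \tfrac{1}{h(\lambda)}\,\fraka_{w\cdot\T_0}\frakb_{w\cdot\T_0} \;=\; w\,\yy_{\T_0}\,w^{-1}.
\]
Therefore $h(\lambda)\,E_\lambda = \sum_{w\in\Sym_n} w\,\yy_{\T_0}\,w^{-1}$ is manifestly conjugation-invariant and thus lies in $Z(\Bbbk\Sym_n)$.

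Because trace characters are class functions, for any $\mu\vdash n$,
\[
\chi^\mu\bigl(h(\lambda)\,E_\lambda\bigr) \;=\; \sum_{w\in\Sym_n}\chi^\mu\bigl(w\yy_{\T_0}w^{-1}\bigr) \;=\; n!\,\chi^\mu(\yy_{\T_0}).
\]
The key computation is $\chi^\mu(\yy_{\T_0}) = \delta_{\lambda,\mu}$. Right multiplication by the idempotent $\yy_{\T_0}$ on $S^\mu$ is a projection onto $S^\mu\yy_{\T_0}$, so its trace equals $\dim_\Bbbk S^\mu\yy_{\T_0}$. The evaluation map $\phi\mapsto\phi(\yy_{\T_0})$ is a $\Bbbk$-linear isomorphism $\Hom_{\Bbbk\Sym_n}(\yy_{\T_0}\Bbbk\Sym_n,\, S^\mu)\to S^\mu\yy_{\T_0}$; combining this with $\yy_{\T_0}\Bbbk\Sym_n \cong S^\lambda$ and Schur's lemma gives $\dim_\Bbbk S^\mu\yy_{\T_0} = \delta_{\lambda,\mu}$.

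Applying Proposition \ref{prop:char} to the central element $E_\lambda$, its eigenvalue on $S^\mu$ equals
\[
\frac{\chi^\mu(E_\lambda)}{\chi^\mu(1)} \;=\; \frac{n!\,\delta_{\lambda,\mu}}{h(\lambda)\,\chi^\mu(1)} \;=\; \delta_{\lambda,\mu},
\]
using $h(\lambda) = n!/n(\lambda)$ and $\chi^\lambda(1) = \dim_\Bbbk S^\lambda = n(\lambda)$. Hence $E_\lambda$ acts as the identity on $S^\lambda$ and as zero on every other simple, which forces $E_\lambda = \idem(\lambda)$. I expect the main obstacle to be the character identity $\chi^\mu(\yy_{\T_0}) = \delta_{\lambda,\mu}$; the rest is formal.
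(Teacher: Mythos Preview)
Your argument is correct. The centrality of $E_\lambda$ via the rewriting $h(\lambda)E_\lambda=\sum_{w}w\,\yy_{\T_0}\,w^{-1}$ is exactly right, and the computation of $\chi^\mu(\yy_{\T_0})$ is clean: the standard isomorphism $\Hom_A(eA,M)\cong Me$ for an idempotent $e$ in any algebra $A$ and right $A$-module $M$, combined with $\yy_{\T_0}\Bbbk\Sym_n\cong S^\lambda$ and Schur's lemma, gives $\dim S^\mu\yy_{\T_0}=\delta_{\lambda,\mu}$, and in characteristic zero the trace of an idempotent operator is its rank. The final eigenvalue computation via Proposition~\ref{prop:char} and the identity $h(\lambda)n(\lambda)=n!$ is routine.

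The paper itself does not prove this proposition; it simply records it as an easy exercise with a reference to \cite{Simon}*{Cor.~VI.3.7}. Your proof is a complete, self-contained argument using only facts already established in the paper, so it actually supplies what the paper omits. One small stylistic point: the closing sentence (``I expect the main obstacle\ldots'') reads as a planning note rather than part of the proof and should be dropped.
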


The proof is an easy exercise, cf.\ \cite{Simon}*{Cor.~VI.3.7}.

Recall that Young \cite{Young} found a family of primitive idempotents
$\{e_\T\}$, also indexed by the set of standard tableaux of $n$ boxes,
which are pairwise orthogonal and sum to $1$. These idempotents are
part of Young's seminormal form, so we call them Young's 
\demph{seminormal idempotents}. R.~M.~Thrall \cite{Thrall} (see also
\cite{Garsia}*{2.16}, \cite{Lascoux}) found the following recursive
description of the $e_\T$. For each standard tableau $\T$ of $n$
boxes, the element $e_\T$ of $\Bbbk \Sym_n$ may be defined by
\begin{equation}\label{eq:e_T-def}
  e_\T = 
  \begin{cases}
    e_{\ov{\T}} \cdot \yy_\T \cdot e_{\ov{\T}}  & \text{ if } n > 1, \\
    1 &  \text{ if } n = 1,
  \end{cases}
\end{equation}
where $\ov{\T}$ is the standard tableau obtained from $\T$ by removing
the box containing $n$.

So we now have two families $\{e_\T\}$, $\{\idem_\T\}$ of pairwise
orthogonal primitive idempotents, both indexed by the set of standard
tableaux of $n$ boxes. One might ask how the
two families are related. Here is the answer.

\begin{prop}\label{thm:Giaquinto=Thrall}
  For any standard tableau $\T$ of $n$ boxes, we have $\idem_\T =
  e_\T$. So the canonical idempotents of Definition \ref{def:idem-T}
  are Young's seminormal idempotents in the case of symmetric group
  algebras.
\end{prop}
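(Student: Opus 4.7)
The plan is to invoke the uniqueness criterion of Corollary~\ref{cor:idem-characterization}. Young's seminormal idempotents $\{e_\T\}$ are classically known to form a family of pairwise orthogonal idempotents summing to $1$, so it suffices to verify the two defining conditions of that corollary: (a) $\sum_{\T \mapsto \lambda} e_\T = \idem(\lambda)$ for all $\lambda \vdash n$, and (b) $e_{\ov{\T}}\, e_\T = e_\T$ for all standard tableaux $\T$. Once these are checked, uniqueness forces $e_\T = \idem_\T$.

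Condition (b) is essentially immediate from the recursion \eqref{eq:e_T-def}, together with the fact that $e_{\ov{\T}}$ is itself an idempotent:
\[
e_{\ov{\T}}\, e_\T \,=\, e_{\ov{\T}} \cdot e_{\ov{\T}}\, \yy_\T\, e_{\ov{\T}} \,=\, e_{\ov{\T}}\, \yy_\T\, e_{\ov{\T}} \,=\, e_\T.
\]
For condition (a), the first step I would take is to establish the intermediate fact that $e_\T \cdot \idem(\lambda) = e_\T$ whenever $\shape(\T) = \lambda$. The key observation is that the right ideal $\yy_\T \Bbbk\Sym_n$ is isomorphic to $S^\lambda$ and hence lies inside the $\lambda$-isotypic component $\idem(\lambda)\Bbbk\Sym_n$; since $\idem(\lambda)$ is central, this gives $\yy_\T\, \idem(\lambda) = \yy_\T$. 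An easy induction on $n$ using \eqref{eq:e_T-def} then upgrades this to $e_\T\, \idem(\lambda) = e_\T$. Splitting the partition of unity $1 = \sum_\T e_\T$ by shape and multiplying on the left by $\idem(\mu)$ yields
\[
\idem(\mu) \,=\, \idem(\mu) \cdot \sum_{\T \in \Tab(n)} e_\T \,=\, \sum_{\lambda \vdash n}\sum_{\T \mapsto \lambda} \idem(\mu)\, e_\T \,=\, \sum_{\T \mapsto \mu} e_\T,
\]
which is exactly condition (a).

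The main (mild) obstacle is simply marshalling the classical ingredients about Young's seminormal idempotents---pairwise orthogonality and summing to $1$---which are taken as given in the paragraph preceding the proposition. Once these are in hand, the argument is purely formal: Thrall's recursion \eqref{eq:e_T-def} is structurally parallel to the recursive description $\idem_\T = \idem_{\ov{\T}}\, \idem(\type(\T))$ of Remark~\ref{rmk:idem-recursive}, and the elementary identity $\yy_\T\, \idem(\lambda) = \yy_\T$ funnels each $e_\T$ into the correct isotypic block, so the characterization of Corollary~\ref{cor:idem-characterization} applies verbatim.
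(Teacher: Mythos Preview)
Your proposal is correct and takes essentially the same approach as the paper: both invoke the uniqueness criterion of Corollary~\ref{cor:idem-characterization}, and both verify condition~(b) directly from Thrall's recursion~\eqref{eq:e_T-def}. The paper's proof is terser---it mentions only condition~(b) explicitly and tacitly absorbs condition~(a) into the classical facts about Young's seminormal idempotents stated before the proposition---whereas you give an explicit argument for~(a) via $\yy_\T\,\idem(\lambda) = \yy_\T$; this extra detail is welcome but does not change the strategy.
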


\begin{proof} 
  This follows immediately from Corollary
  \ref{cor:idem-characterization} once we observe that
  $e_{\ov{\T}} e_\T = e_\T$ for all standard tableaux $\T$. Indeed,
  this relation is clear from Thrall's recursive definition of the
  $e_\T$; see \eqref{eq:e_T-def}.
\end{proof}

\begin{rmk}
Thrall's recursive description of the seminormal idempotents depends
on the Young symmetrizers, while the simpler recursion obtained by the
methods of this paper does not.
\end{rmk}

\begin{examples}\label{exam:symm-idems}
We compute a number of $\idem(\lambda)$ recursively using Theorem
\ref{thm:central-P-recursion} and Proposition \ref{prop:c_T-symm},
referring to the branching graph in Figure \ref{Bratteli-Sym}. Of course
$\idem(\ttPART{1}) = 1$. 


\smallskip \noindent\emph{Primitive central idempotents for $n=2$:}
\begin{align*}
\idem(\ttPART{2}) &= P^{\,\ttPART{2}}_{\ttPART{1}}\,\idem(\ttPART{1}) = P^{\,\ttPART{2}}_{\ttPART{1}} = {\tfrac12(J_2+1)} \\
\idem(\ttPART{1,1}) &= P^{\,\ttPART{1,1}}_{\ttPART{1}} \,\idem(\ttPART{1}) = P^{\,\ttPART{1,1}}_{\ttPART{1}}  = -{\tfrac12(J_2 - 1)}. 
\\[1ex]
\intertext{\emph{Primitive central idempotents for $n=3$:}}
\idem(\ttPART{3}) &= P^{\,\ttPART{3}}_{\ttPART{2}} \,\idem(\ttPART{2}) 
	= {\tfrac13(J_3+1)}\,\idem(\ttPART{2}) \\[1.5ex]
\idem(\ttPART{2,1}) &= P^{\,\ttPART{2,1}}_{\ttPART{2}} \,\idem(\ttPART{2}) + P^{\,\ttPART{2,1}}_{\ttPART{1,1}} \,\idem(\ttPART{1,1}) 
	= -{\tfrac13(J_3-2)}\, \idem(\ttPART{2}) + {\tfrac13(J_3+2)}\, \idem(\ttPART{1,1}) \\[1.5ex]
\idem({\!}_{\!}\raisebox{-.5ex}{\ttPART{1,1,1}}{\!}_{\!}) &= P^{\,\ttPART{1,1,1}}_{\ttPART{1,1}} \,\idem(\ttPART{1,1}) 
	=  - {\tfrac13(J_3-1)}\, \idem(\ttPART{1,1}).
\\[1ex]
\intertext{\emph{Primitive central idempotents for $n=4$:}}
\idem(\ttPART{4}) &= 
	P^{\,\ttPART{4}}_{\ttPART{3}} \,\idem(\ttPART{3}) 
	= {\tfrac14(J_4+1)}\,\idem(\ttPART{3})\\[1.5ex]
\idem(\ttPART{3,1}) &= 
	P^{\,\ttPART{3,1}}_{\ttPART{3}} \,\idem(\ttPART{3})  
	+ P^{\,\ttPART{3,1}}_{\ttPART{2,1}} \,\idem(\ttPART{2,1}) 
	= -{\tfrac14(J_4-3)}\,\idem(\ttPART{3}) 
	+ {\tfrac18(J_4+2)J_4}\,\idem(\ttPART{2,1}) \\[1.5ex]
\idem(\ttPART{2,2}) &= 
	P^{\,\ttPART{2,2}}_{\ttPART{2,1}} \,\idem(\ttPART{2,1}) 
= -{\tfrac14(J_4+2)(J_4-2)}\,\idem(\ttPART{2,1}) \\[1.5ex]
\idem({\!}_{\!}\raisebox{-.5ex}{\ttPART{2,1,1}}{\!}_{\!}) &= 
	P^{\,\ttPART{2,1,1}}_{\ttPART{2,1}} \,\idem(\ttPART{2,1}) 
	+ P^{\,\ttPART{2,1,1}}_{\ttPART{1,1,1}} \,\idem({\!}_{\!}\raisebox{-.5ex}{\ttPART{1,1,1}}{\!}_{\!}) 
	= {\tfrac18(J_4-2)J_4}\,\idem(\ttPART{2,1}) 
	+ {\tfrac14(J_4+3)}\,\idem({\!}_{\!}\raisebox{-.5ex}{\ttPART{1,1,1}}{\!}_{\!}) \\[.5ex]
\idem({\!}_{\!}\raisebox{-.75ex}{\ttPART{1,1,1,1}}{\!}_{\!}) &= P^{\,\ttPART{1,1,1,1}}_{\ttPART{1,1,1}} \,\idem({\!}_{\!}\raisebox{-.5ex}{\ttPART{1,1,1}}{\!}_{\!}) 
	= -{\tfrac14(J_4-1)} \,\idem({\!}_{\!}\raisebox{-.5ex}{\ttPART{1,1,1}}{\!}_{\!}).
\end{align*}
We note that the summands in each $\idem(\lambda)$ are the
various $\idem_\T$ in that block, so the $\idem_\T$ are recoverable
from the above expressions.
\end{examples}

\section{Application: Brauer algebras}\label{sec:Brauer}\noindent
In \cite{Brauer}, Brauer defined a finite dimensional algebra
$\B_n(m)$ over $\C$ in order to quantify the invariants of orthogonal
groups. If $E$ is an $m$-dimensional vector space over $\C$ then
$\GL(E) \cong \GL_m(\C)$ acts naturally (on the left) on $E$, this
action extends diagonally to one on $E^{\otimes n}$. The group
$\Sym_n$ acts by place-permutation (on the right) on $E^{\otimes
  n}$. These actions commute, so by linearly extending the actions to
representations, the tensor space $E^{\otimes n}$ is a
$(\C\GL(E),\C\Sym_n)$-bimodule. Classical Schur--Weyl duality
\cite{Schur-27} says that the image of each representation in
$\End_\C(E^{\otimes n})$ is equal to the full centralizer of the
other. This duality elegantly expresses the fundamental duality
between the representation theories of general linear groups and
symmetric groups.

Brauer extended the action of the symmetric group algebra to one of
the algebra $\B_n(m)$ such that when the left action of $\GL(E)$ is
restricted to the orthogonal group $\Orth(E)$, Schur--Weyl duality
also holds for the resulting $(\C\Orth(E), \B_n(m))$-bimodule
structure on $E^{\otimes n}$. This duality relates the representation
theory of orthogonal groups and Brauer algebras. Brauer algebras also
have connections to low-dimensional topology and knot theory; see
e.g.\ \cites{Kauffman,Birman-Wenzl,Fishel-Grojnowski}.

Let $k,l$ be positive integers of the same parity, so that $k+l$ is
even. A Brauer $(k,l)$-diagram is an undirected graph with $k+l$
vertices, such that each vertex is an endpoint of exactly one
edge. Conventionally, the vertices are arranged in two rows within a
rectangle, with $k$ vertices (the \emph{top} vertices) along the top
boundary and $l$ vertices (the \emph{bottom} vertices) along the
bottom boundary, with the edges drawn in the interior of the rectangle
in such a way that intersecting edges cross transversally. For
example, the graph
\begin{gather}\label{pic:Brauer-dia}
\begin{tikzpicture}[scale = 0.4,thick, baseline={(0,-2ex)}] 
\tikzstyle{vertex} = [shape = circle, minimum size = 3.5pt, inner sep =
1pt] 
\node[vertex] (G--8) at (10.5, -2.25) [shape = circle, draw] {}; 
\node[vertex] (G--5) at (6.0, -2.25) [shape = circle, draw] {}; 
\node[vertex] (G--7) at (9.0, -2.25) [shape = circle, draw] {}; 
\node[vertex] (G-6) at (7.5, 1) [shape = circle, draw] {}; 
\node[vertex] (G--6) at (7.5, -2.25) [shape = circle, draw] {}; 
\node[vertex] (G-1) at (0.0, 1) [shape = circle, draw] {}; 
\node[vertex] (G--4) at (4.5, -2.25) [shape = circle, draw] {}; 
\node[vertex] (G--3) at (3.0, -2.25) [shape = circle, draw] {}; 
\node[vertex] (G--1) at (0.0, -2.25) [shape = circle, draw] {}; 
\node[vertex] (G--2) at (1.5, -2.25) [shape = circle, draw] {}; 
\node[vertex] (G-2) at (1.5, 1) [shape = circle, draw] {}; 
\node[vertex] (G-5) at (6.0, 1) [shape = circle, draw] {}; 
\node[vertex] (G-3) at (3.0, 1) [shape = circle, draw] {}; 
\node[vertex] (G-4) at (4.5, 1) [shape = circle, draw] {}; 
\draw (G--8) .. controls +(-0.6, 1.25) and +(0.6, 1.25) .. (G--5); 
	\draw (G-6) .. controls +(-1, -1.1) and +(1, 1.1) .. (G--2); 
\draw (G-1) .. controls +(1, -1) and +(-1, 1) .. (G--4); 
\draw (G--3) .. controls +(-0.6, 1) and +(0.6, 1) .. (G--1); 
\draw (G-3) .. controls +(0.5, -0.5) and +(-0.5, -0.5) .. (G-4); 
	\draw (G-2) .. controls +(0.75, -0.75) and +(-0.75, 0.75) .. (G--6); 
	\draw (G-5) .. controls +(0.25, -0.65) and +(-0.25, 0.65) .. (G--7); 
\end{tikzpicture} 
\end{gather}
is a Brauer $(6,8)$-diagram.  Edges connecting two vertices in the
same row are called \demph{horizontal edges}.  All other edges must
have one top and one bottom endpoint, such edges are \demph{through
  edges}. The \emph{rank} of a diagram is the number of through edges.

Let $\Bbbk$ be a ring and $\parm \in \Bbbk$ a distinguished parameter.
Multiplication of Brauer diagrams is defined as follows. Given a
$(k,l)$-diagram $b$ and an $(l,m)$-diagram $b'$, place $b$ above $b'$
and identify the $i$th bottom vertex of $b$ with the $i$th top vertex
of $b'$. Let $N=N(b,b')$ be the number of interior loops in the new
graph and let $b''$ be that graph with its loops and intermediate
vertices omitted. Then $b''$ is a $(k,m)$-diagram, and we define
\begin{equation}\label{eq:dia-mult}
   b b' = \parm^N  (b \compose b'), \quad \text{where } b \compose
   b' = b''.
\end{equation} 
The $(k,m)$-diagram $b'' = b \compose b'$ is the \emph{composite
  diagram} of $b, b'$. Note that the parameter $\parm$ keeps track of
the number of discarded interior loops. In case $k=m$ we call the
diagram $b''$ simply an $m$-diagram.

The Brauer algebra over $\Bbbk$ with parameter $\parm$ is denoted by
$\B_n(\parm)$, and is defined to be the $\Bbbk$-span of the set of
$n$-diagrams.  Extended linearly, the multiplication rule $(b,b')
\mapsto bb'$ in \eqref{eq:dia-mult} defines an associative
multiplication on $\B_n(\parm)$. An \demph{identity edge} in an
$n$-diagram is an edge connecting the $i$th vertices in the top and
bottom rows; the $n$-diagram in which all edges are identity edges is
the unit element of $\B_n(\parm)$. Brauer $n$-diagrams in which every
edge is a through edge will be identified with permutations; note that
multiplication of Brauer diagrams coincides with multiplication of
permutations in case both diagrams are permutations, so $\Bbbk\Sym_n$
is a subalgebra of $\B_n(\parm)$. Clearly $\B_1(\parm) \cong \Bbbk$;
we agree to interpret $\B_0(\parm) = \Bbbk$.

Let $\B_{k,l}(\parm)$ be the $\Bbbk$-span of the set of
$(k,l)$-diagrams. Multiplication of Brauer diagrams makes this into a
$(\B_k(\parm), \B_l(\parm))$-bimodule with $\B_k(\parm)$ acting by
left multiplication and $\B_l(\parm)$ by right multiplication. This
bimodule structure will be used below to construct representations of
Brauer algebras.

Semisimplicity of $\B_n(\delta)$ over $\C$ was studied in \cite{Brown}
in the case when $\delta$ is a positive integer: he showed that
$\B_n(\delta)$ is semisimple if and only if $\delta \ge n-1$. Still
working over $\C$, Hanlon and Wales \cite{Hanlon-Wales:90} conjectured
that $\B_n(\delta)$ is always semisimple if $\delta \in \C$ is not an
integer; the conjecture was proved by Wenzl \cite{Wenzl}, who also
parametrized the simple modules and established the branching diagram.
Further work on semisimplicity of Brauer algebras, including
semisimplicity over other fields, can be found in \cites{Doran-et-al,
  Rui, Cox-et-al}.

We assume for the remainder of this section that $\Bbbk$ is a field of
characteristic zero and $\delta \in \Bbbk$ is not an integer. This
assumption ensures that $\B_n(\delta)$ is split semisimple over
$\Bbbk$. Under this assumption, we show that the Brauer algebras form
a multiplicity-free family, identify a JM-sequence for this family,
develop eigenvalue formulas, and compute central idempotents using
Theorem \ref{thm:central-P-recursion}.

In order to simplify the notation, we suppress the parameter $\parm$,
writing $\B_n = \B_n(\parm)$ from now on.  There is a natural unital
embedding
\[
  \iota: \B_n \into \B_{n+1} 
\]
given by sending an $n$-diagram to the corresponding $(n{+}1)$-diagram
obtained by appending an identity edge on the right (connecting two
additional vertices). We identify $\B_{n}$ as a unital subalgebra of
$\B_{n+1}$, for each $n$, without further mention of $\iota$.

We write $(i,j)$ for the $n$-diagram corresponding to a transposition
$(i,j) \in \Sym_n$; this is the diagram with through edges connecting
the $i$th and $j$th top vertices to the $j$th and $i$th bottom ones,
respectively, with all other edges identity edges. Similarly,
$\ov{(i,j)}$ is the $n$-diagram with horizontal edges connecting the
$i$th and $j$th vertices in each row, and all other edges identity
edges. We set
\begin{equation}\label{eq:s_i-e_i}
  s_i = (i,i+1); \quad e_i = \ov{(i,i+1)}, \qquad \text{ any } i < n.
\end{equation}
It is easy to see that $\B_n$ is generated by the $s_i, e_i$ for $1
\le i \le n-1$. Defining relations satisfied by these generators can
be found in \cite{Nazarov}. Note that $e_i^2 = \parm e_i$, so
$\parm^{-1} e_i$ is idempotent. Any $e_i$ generates the two-sided
ideal spanned by all diagrams with at least two horizontal edges; the
quotient by this ideal is isomorphic to $\Bbbk \Sym_n$.

Our next task is to construct the irreducible (right) $\B_n$-modules.
For this purpose it is useful to apply some general observations from
\cite{Green}*{\S6.2}. The applicability of these ideas to diagram
algebras was demonstrated in \cites{Martin-Saluer, Martin:96,
  Doran-et-al, Martin-Ryom-Hansen, Cox-et-al, CDM}; here we more or
less follow the summary outline at the beginning of \cite{Cox-et-al}.
In general, then, let $A$ be an algebra over a field $\Bbbk$ and $e
\in A$ an idempotent. The rule
\[
M \mapsto Me
\]
defines an exact functor $\F$ (often called the ``Schur functor'')
from right $A$-modules to right $eAe$-modules.  The functor $\F$ takes
irreducible modules to irreducible modules, or zero. More precisely,
we have the following result.

\begin{thm}[\cite{Green}*{(6.2g)}] 
\label{thm:Green}
  Let $\{L(\lambda): \lambda \in \Lambda \}$ be a full set of pairwise
  non-isomorphic irreducible right $A$-modules, and let \[ \Lambda^e =
  \{\lambda \in \Lambda: L(\lambda)e \ne 0\}. \] Then $\{L(\lambda)e:
  \lambda \in \Lambda^e \}$ is a full set of pairwise non-isomorphic
  irreducible right $eAe$-modules.
\end{thm}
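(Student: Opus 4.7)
My plan is to exploit the Wedderburn--Artin decomposition of the split semisimple algebra $A$ and track how the idempotent $e$ interacts with each matrix block. Writing
\[
A \ \cong \bigoplus_{\lambda \in \Lambda} \End_\Bbbk(L(\lambda))
\]
as in \eqref{eq:ss-dec}, the idempotent $e$ corresponds to a tuple $(e_\lambda)_{\lambda \in \Lambda}$ of idempotent endomorphisms, where $e_\lambda$ is the projection of $L(\lambda)$ onto the subspace $L(\lambda) e$.

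I would first observe that the corner algebra $eAe$ decomposes block-by-block as
\[
eAe \ \cong \bigoplus_{\lambda \in \Lambda} e_\lambda \End_\Bbbk(L(\lambda)) e_\lambda,
\]
and that the restriction map $f \mapsto f|_{L(\lambda) e}$ identifies the $\lambda$-th summand with $\End_\Bbbk(L(\lambda) e)$. Injectivity is immediate, and surjectivity follows by extending an endomorphism of $L(\lambda) e$ by zero on a complementary subspace of $L(\lambda)$. Since the $\lambda$-th summand is nonzero precisely when $\lambda \in \Lambda^e$, this yields
\[
eAe \ \cong \bigoplus_{\lambda \in \Lambda^e} \End_\Bbbk(L(\lambda) e).
\]

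The conclusion follows by applying Artin--Wedderburn in reverse: the above decomposition shows that $eAe$ is split semisimple, and its pairwise non-isomorphic irreducible right modules are precisely the natural modules of the matrix blocks, namely $\{L(\lambda) e \mid \lambda \in \Lambda^e\}$. A brief check is needed to confirm that the $eAe$-action on $L(\lambda) e$ coming from this block decomposition coincides with the action inherited from the ambient $A$-action: given $m \in L(\lambda) e$ and $eae \in eAe$, the product $m \cdot eae$ computed in $L(\lambda)$ already lies in $L(\lambda) e$ and agrees with the block-wise action.

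The main (modest) obstacle is the identification $e_\lambda \End_\Bbbk(L(\lambda)) e_\lambda \cong \End_\Bbbk(L(\lambda) e)$, a standard fact about corners of matrix algebras that simply needs to be made explicit. Once this is in hand, the remainder of the argument is essentially formal.
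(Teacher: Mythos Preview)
Your argument is correct, but it proves strictly less than what is stated: you invoke the Wedderburn--Artin decomposition \eqref{eq:ss-dec}, which requires $A$ to be split semisimple, whereas the theorem (as stated here, following Green) is formulated for an arbitrary algebra $A$ over a field with an arbitrary idempotent $e$. The paper itself does not supply a proof---it simply cites \cite{Green}*{(6.2g)}---so there is no ``paper's proof'' to compare against directly. Green's argument is module-theoretic and does not use Wedderburn--Artin: for irreducibility, if $0 \ne N \subseteq L(\lambda)e$ is an $eAe$-submodule then $N = Ne$, so $NA = L(\lambda)$ by irreducibility of $L(\lambda)$, whence $L(\lambda)e = NAe = N(eAe) \subseteq N$; the pairwise non-isomorphism and exhaustiveness are handled via the adjoint functor $N \mapsto N \otimes_{eAe} eA$ and the fact (cited in the paper as \cite{Green}*{(6.2e)}) that this has a unique maximal proper submodule when $N$ is irreducible.

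That said, your Wedderburn-based proof is entirely adequate for every application made in this paper, since the Brauer algebras $\B_n(\delta)$ are assumed split semisimple throughout Section~\ref{sec:Brauer}. It is also arguably more transparent in that setting: the identification $e_\lambda \End_\Bbbk(L(\lambda)) e_\lambda \cong \End_\Bbbk(L(\lambda)e)$ makes the structure of $eAe$ completely explicit in one stroke, whereas the general argument yields the classification of simples without directly exhibiting $eAe$ as a product of matrix algebras. If you want your proof to match the stated generality, replace the block decomposition with the direct module-theoretic argument sketched above.
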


Note that right $A$-modules annihilated by $e$ are equivalent to right
$A/AeA$-modules.
Thus, the irreducible right $A$-modules $L(\lambda)$ with $\lambda \in
\Lambda \backslash \Lambda^e$ are a full set of irreducible
$A/AeA$-modules. If $A$ is finite dimensional, this reduces the
problem of finding an indexing set $\Lambda$ for the irreducible
$A$-modules to the same problem for the smaller algebras $eAe$,
$A/AeA$.

There is another functor $\G$ going from right $eAe$-modules to right
$A$-modules, defined by $\G(N) = N \otimes_{eAe} eA$. This functor,
which was also considered in \cite{Green}*{\S6.2}, is a right inverse
to $\F$, i.e., $\F(\G(N)) \cong N$, so $\G$ is a full embedding.%
\footnote{\,In \cite{Cox-et-al}, the functors $\F,\G$ are called ``localization'' and
``globalization'' functors, respectively.} 
Furthermore,
\cite{Green}*{(6.2e)} shows that $\G(N)$ always has a unique maximal
proper submodule whenever $N$ is irreducible.

In case $A$ is semisimple, it follows that $\G$ must take irreducible
$eAe$-modules to irreducible $A$-modules (and the unique maximal
proper submodule is zero). Thus, for irreducible $A$-modules $M$ such
that $Me \ne 0$, we have $\G(\F(M)) \cong M$. So, $\G$ is also a left
inverse to $\F$. Thus, in the semisimple case, the functors $\F$ and
$\G$ implement an equivalence of categories between $A$-modules not
killed by $e$ and $eAe$-modules. Since by semisimplicity $A \cong AeA
\bigoplus A/AeA$, and the $A$-modules killed by $e$ are the
$A/AeA$-modules, it follows that the $A$-modules not killed by $e$ are
the same as the $AeA$-modules. To summarize:
\begin{prop}\label{prop:AeA=eAe}
If A is semisimple, then: 
\begin{enumerate}
\item $\G$ takes irreducible to irreducibles. 
\item The functors $\F$, $\G$ induce an equivalence of categories
  between $AeA$-modules and $eAe$-modules.
\end{enumerate}
\end{prop}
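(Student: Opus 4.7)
The plan is to first prove (a), then leverage it to deduce (b) by reducing the verification of the equivalence to irreducibles.

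For (a), I combine two facts recorded immediately before the proposition: $\G(N)$ admits a unique maximal proper submodule when $N$ is irreducible, and $\F(\G(N)) \cong N$, which forces $\G(N) \neq 0$ whenever $N \neq 0$. Semisimplicity of $A$ implies $\G(N)$ decomposes as a direct sum $L_1 \oplus \cdots \oplus L_k$ of irreducible $A$-modules; in such a decomposition the maximal proper submodules are exactly the codimension-one summands $\bigoplus_{j \neq i} L_j$. Uniqueness of the maximal proper submodule therefore forces $k = 1$, so $\G(N)$ is irreducible.

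For (b), I would first invoke semisimplicity to obtain the two-sided ideal decomposition $A \cong AeA \oplus (A/AeA)$, so that every $A$-module splits canonically into an $AeA$-summand and an $A/AeA$-summand, the latter being precisely the submodule annihilated by $e$. Thus the $AeA$-modules are exactly the $A$-modules not killed by $e$. To produce the equivalence, I would verify that the counit $\F\G \to \mathrm{id}$ and the unit $\mathrm{id} \to \G\F$ (restricted to $AeA$-modules) are natural isomorphisms. The counit has already been noted to be an isomorphism. For the unit, additivity and semisimplicity reduce the check to irreducibles $L(\lambda)$ with $\lambda \in \Lambda^e$. For such an $L(\lambda)$, Theorem \ref{thm:Green} gives that $\F(L(\lambda)) = L(\lambda)e$ is irreducible over $eAe$, whence by (a) $\G(L(\lambda)e)$ is an irreducible $A$-module satisfying $\F(\G(L(\lambda)e)) \cong L(\lambda)e \neq 0$. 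Since $\lambda \mapsto [L(\lambda)e]$ is injective on $\Lambda^e$, this forces $\G(\F(L(\lambda))) \cong L(\lambda)$.

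The main obstacle is not the pointwise calculation but upgrading it to a natural isomorphism of functors. The cleanest route is to observe that $(\G, \F)$ forms an adjoint pair via the tensor-hom adjunction, so the unit and counit are canonical natural transformations; since each is an isomorphism on every irreducible in the appropriate category, and both functors are additive and exact in the semisimple setting, they are isomorphisms on all objects. This completes the equivalence and thereby proves (b).
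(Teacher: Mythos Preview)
Your argument is correct and follows essentially the same route as the paper: the paragraph immediately preceding the proposition already argues that semisimplicity forces the unique maximal proper submodule of $\G(N)$ to be zero, and then combines $\F\G \cong \mathrm{id}$ with Theorem~\ref{thm:Green} to get $\G\F(L(\lambda)) \cong L(\lambda)$ for $\lambda \in \Lambda^e$, exactly as you do; your added care about naturality via the adjunction is a welcome refinement the paper omits. Two small slips worth fixing: the submodules $\bigoplus_{j \ne i} L_j$ are not ``codimension-one'' (and when isomorphism types repeat there are further maximal submodules, though you only need that there are at least two), and since $\G$ is \emph{left} adjoint to $\F$ the unit is $\mathrm{id} \to \F\G$ and the counit is $\G\F \to \mathrm{id}$, so your labels and arrow directions are swapped---harmless here, as you are only checking that both are isomorphisms.
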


Now we apply the above observations to the algebra $\B_n$, taking $e$
to be the idempotent $\xi_n = \parm^{-1} e_{n-1}$, with $e_{n-1}$ as
in \eqref{eq:s_i-e_i}.  This immediately gives functors $\F_n,
\G_{n-2}$ as above, defined by the rules
\[
  \F_n(M) = M\xi_n, \qquad \G_{n-2}(N) = N \otimes_{\xi_n \B_n \xi_n}
  \xi_n \B_n
\]
for any right $\B_n$-module $M$, any right $\B_{n-2}$-module $N$. A
crucial fact about the idempotent $\xi_n$ is that there is an
isomorphism of algebras
\begin{equation}\label{eq:rec-iso}
  \B_{n-2} \cong \xi_n \B_n \xi_n
\end{equation}
for each $n \ge 2$. The isomorphism is given by the rule $b \mapsto
\xi_n b \xi_n$, for $b \in \B_{n-2}$; note that it maps the unit
element of $\B_{n-2}$ to $\xi_n$.  Furthermore, $\xi_n$ commutes
pointwise with $\B_{n-2}$:
\begin{equation}\label{eq:xi_n-commutes}
  \xi_n b = b \xi_n, \quad\text{for all } b \in \B_{n-2}.
\end{equation}
In consequence, we have $\B_{n-2} \cong \B_{n-2}\xi_n =
\xi_n\B_{n-2}\xi_n$.  If $\Irr(n)$ is a set indexing the irreducible
$\B_n$-modules and $\Irr^n$ a set indexing the irreducible $\B_n/\B_n
\xi_n \B_n$-modules, then it follows from \eqref{eq:rec-iso} and the
preceding remarks that
\[
\Irr(n) = \Irr^n \,\sqcup\,  \Irr(n-2) .
\]
Since $\B_n/\B_n \xi_n \B_n$ is isomorphic to $\Bbbk \Sym_n$, we can
set $\Irr^n = \{ \lambda \mid \lambda \vdash n \}$. It is trivial to
compute $\Lambda_0$ and $\Lambda_1$ (as $\B_0 \cong \B_1 \cong
\Bbbk$), so it immediately follows by induction on $n$ that
\begin{equation*}
\Irr(n) = \{ \lambda \mid \lambda \vdash n-2l \qand 0 \le 2l \le n \}.
\end{equation*}

Now that we know an indexing set for the irreducible $\B_n$-modules,
we turn to the problem of constructing them. We will follow the
approach of \cite{Doran-et-al}, using the $(\B_k, \B_n)$-bimodule
$\B_{k,n} = \B_{k,n}(\delta)$ discussed above, where $k \le n$ has the
same parity as $n$. Let $\B^0_{k,n}$ be the span of the
$(k,n)$-diagrams of rank (number of through edges) strictly smaller
than $k$.  Since multiplication of diagrams cannot increase the number
of through edges, $\B^0_{k,n}$ is a sub-bimodule of $\B_{k,n}$, and
hence the quotient
\[
V^n_k = \B_{k,n}/ \B^0_{k,n}
\]
is a $(\B_k, \B_n)$-bimodule. The set of $(k,n)$-diagrams of rank $k$
is a complete set of representatives of the quotient. If $k=n$, then
$V^n_n \cong \Bbbk \Sym_n$ and $\F_n(V^n_n) = V^n_n \xi_n =
0$. Furthermore, if $k<n$, then we have an isomorphism
\begin{equation}\label{eq:F_nV}
  \F_n(V^n_k) = V^n_k\xi_n \cong V^{n-2}_k
\end{equation}
as $(\B_k,\B_{n-2})$-bimodules. The isomorphism arises from forgetting
the rightmost horizontal edge in $b\xi_n$, for each $(k,n)$-diagram
$b$. (There is a factor of $\parm^{-1}$ which does not matter.)  By
restriction, since $\Bbbk\Sym_k$ is contained in $\B_k$, the bimodule
$V^n_k$ is a $(\Bbbk\Sym_k, \B_n)$-bimodule. Therefore, if $\lambda
\vdash k$, we define
\[
M^{(\lambda,n)} = S^\lambda \otimes_{\Bbbk \Sym_k} V^n_k .
\]
This is a right $\B_n$-module, where $S^\lambda$ is the Specht module
considered in the previous section. If $\lambda \vdash n$, then $k=n$
and $V^n_n \cong \Bbbk \Sym_n$, so $M^{(\lambda, n)} \cong S^\lambda$
as right $\B_n$-modules (with $\B_n \xi_n \B_n$ acting trivially).
Clearly this is an irreducible $\B_n$-module; indeed, it is
irreducible as a $\Bbbk\Sym_n$-module.

\begin{prop}\label{prop:irr-B_n}
  A full set of irreducible right $\B_n$-modules is the set of
  $M^{(\lambda,n)}$ such that $\lambda \vdash k$, $0 \le k \le n$, and
  $n$, $k$ are of the same parity.
\end{prop}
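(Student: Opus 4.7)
The plan is to induct on $n$. The base cases $n=0,1$ are trivial since $\B_0, \B_1 \cong \Bbbk$, and the only irreducible module corresponds to the empty partition. For the inductive step, we assume the result for $\B_{n-2}$ and exploit the Schur functor $\F_n = (-)\xi_n$ together with the algebra isomorphism $\B_{n-2} \cong \xi_n\B_n\xi_n$ from \eqref{eq:rec-iso}. The cardinality count $\Irr(n) = \Irr^n \sqcup \Irr(n-2)$ already established shows that the pairs $(\lambda,n)$ in the statement are exactly in bijection with $\Irr(\B_n)$; thus it suffices to show that each $M^{(\lambda,n)}$ is irreducible and that distinct pairs yield non-isomorphic modules.

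For the case $\lambda\vdash n$ (so $k=n$), we noted already that $V^n_n \cong \Bbbk\Sym_n$, whence $M^{(\lambda,n)} \cong S^\lambda$ is irreducible as a right $\Bbbk\Sym_n$-module, and it is annihilated by $\xi_n$ because $V^n_n\xi_n = 0$. These modules correspond to the irreducibles of the quotient $\B_n/\B_n\xi_n\B_n \cong \Bbbk\Sym_n$ and are pairwise non-isomorphic by the classical theory. For $\lambda\vdash k$ with $k<n$, apply $\F_n$ to obtain
\[
  \F_n\bigl(M^{(\lambda,n)}\bigr) \,=\, \bigl(S^\lambda \otimes_{\Bbbk\Sym_k} V^n_k\bigr)\xi_n \,\cong\, S^\lambda \otimes_{\Bbbk\Sym_k} V^{n-2}_k \,=\, M^{(\lambda,n-2)}
\]
using the bimodule isomorphism \eqref{eq:F_nV} together with right-exactness of tensoring with $S^\lambda$. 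By the inductive hypothesis, $M^{(\lambda,n-2)}$ is a nonzero irreducible $\B_{n-2}$-module, and hence, under \eqref{eq:rec-iso}, a nonzero irreducible $\xi_n\B_n\xi_n$-module.

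To deduce irreducibility of $M^{(\lambda,n)}$ itself (not merely of its $\xi_n$-part), I invoke Proposition \ref{prop:AeA=eAe}: in the semisimple algebra $\B_n$, the Schur functor $\F_n$ and its quasi-inverse $\G_{n-2}$ implement an equivalence of categories between right $\B_n\xi_n\B_n$-modules and right $\xi_n\B_n\xi_n$-modules. The key additional input is that $M^{(\lambda,n)}$ is itself a $\B_n\xi_n\B_n$-module, equivalently, that $M^{(\lambda,n)}$ is generated as a right $\B_n$-module by $M^{(\lambda,n)}\xi_n$. Granting this, the counit map $\G_{n-2}(\F_n(M^{(\lambda,n)})) \to M^{(\lambda,n)}$ is surjective; its source is irreducible (being the $\G_{n-2}$-image of the irreducible $M^{(\lambda,n-2)}$), and its target is nonzero, so the map is an isomorphism and $M^{(\lambda,n)}$ is irreducible as required.

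The main obstacle is the generation statement $M^{(\lambda,n)} = M^{(\lambda,n)}\xi_n\cdot \B_n$ for $k<n$, which reduces to the diagrammatic claim that $V^n_k = V^n_k\xi_n\cdot \B_n$. This I would establish by the direct observation that, since $k<n$ with $k\equiv n\pmod 2$, every $(k,n)$-diagram of rank $k$ has at least one horizontal edge in its bottom row, and one may first permute the bottom vertices (by right multiplication by an element of $\Sym_n \subset \B_n$) so that a horizontal bottom edge occupies positions $n{-}1, n$, exhibiting the diagram as (a scalar multiple of) an element of $V^n_k\xi_n$ times a permutation. Pairwise non-isomorphism is then immediate: modules with $\lambda\vdash n$ are killed by $\xi_n$ while those with $k<n$ are not; within the $k<n$ regime, $\F_n$ distinguishes them by the inductive hypothesis; and within $k=n$, the $S^\lambda$ are classically pairwise non-isomorphic.
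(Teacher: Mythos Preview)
Your proof is correct and follows essentially the same inductive strategy as the paper's: compute $\F_n(M^{(\lambda,n)}) \cong M^{(\lambda,n-2)}$ via \eqref{eq:F_nV}, apply the inductive hypothesis, and lift irreducibility back through $\G_{n-2}$ using Proposition~\ref{prop:AeA=eAe}. You are in fact more careful than the paper on one point: you explicitly verify the diagrammatic generation claim $V^n_k = V^n_k\xi_n\cdot\B_n$ (needed so that the counit $\G_{n-2}\F_n(M^{(\lambda,n)}) \to M^{(\lambda,n)}$ is surjective), whereas the paper simply asserts $\G_{n-2}(M^{(\lambda,n-2)}) \cong M^{(\lambda,n)}$ from semisimplicity without isolating this step.
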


\begin{proof}
Assume that $k,n$ have the same parity.  To show that the
$M^{(\lambda,n)}$ are pairwise non-isomorphic and irreducible, we 
proceed by induction. We consider the two cases $k<n$ and $k=n$. 
(Modules between the two cases are non-isomorphic by Theorem 
\ref{thm:Green}.)

If $k<n$ and $\lambda \vdash k$, then it follows from
\eqref{eq:rec-iso}, \eqref{eq:F_nV}, and the definition of
$M^{(\lambda,n)}$ that
\[
\F_n(M^{(\lambda,n)}) = M^{(\lambda,n)} \xi_n = S^\lambda \otimes_{\Bbbk
  \Sym_k} V^n_k \xi_n \cong S^\lambda \otimes_{\Bbbk
  \Sym_k} V^{n-2}_k = M^{(\lambda,n-2)}
\]
as right $\B_{n-2}$-modules. Since $\B_n$ is semisimple, and
$M^{(\lambda,n-2)} \ne 0$ by the inductive hypothesis, it follows that
\[
\G_{n-2} (M^{(\lambda,n-2)}) \cong M^{(\lambda,n)}
\]
as right $\B_n$-modules. Furthermore, by Proposition
\ref{prop:AeA=eAe}(a), $M^{(\lambda,n)}$ is irreducible as a right
$\B_{n}$-module. Appealing to Proposition \ref{prop:AeA=eAe}(b), we
see that the distinct $M^{(\lambda,n)}$ are pairwise non-isomorphic.

In the case $k=n$ and $\lambda \vdash n$, we have
$M^{(\lambda,n)} \cong S^\lambda$. Such modules are pairwise 
non-isomorphic (and irreducible) by the remarks preceding the 
theorem. This completes the proof.
\end{proof}

\begin{rmk}
Although not needed in the sequel, to complete the picture we describe
a $\Bbbk$-basis for $M^{(\lambda,n)}$.  This requires finding a
complete set of orbit representatives for the left action of $\Sym_k$
on the set of $(k,n)$-diagrams of rank $k$.  If $b$ is a
$(k,n)$-diagram, we let $\pi(b)$ in $\Sym_k$ be the permutation
obtained from $b$ by removing the horizontal edges and their
endpoints.  Recall from \cites{Fishel-Grojnowski,Xi} that a
$(k,n)$-diagram $b$ is a \emph{flat $(k,n)$-dangle} if $\pi(b)$ is the
identity.  Then the set of flat $(k,n)$-dangles is the desired set of
representatives.  Any $(k,n)$-diagram $b$ is uniquely expressible as a
product
\[
b = \pi(b) d(b),
\]
where $d(b)$ is a flat $(k,n)$-dangle $d(b)$. It follows that the set
\begin{equation*}
\{ v \otimes d \mid v \in \widehat{S}^\lambda,\ d \text { a flat
  $(k,n)$-dangle} \}
\end{equation*}
is a $\Bbbk$-basis for $M^{(\lambda,n)}$, where $\widehat{S}^\lambda$ is any
$\Bbbk$-basis of $S^\lambda$.
\end{rmk}

Next, we explain why the family $\{ \B_n \mid n\ge 0 \}$ is
multiplicity-free. Recall that if $B$ is a subalgebra of an algebra
$A$ and if $M$ is a right $B$-module, then the induced module is the
right $A$-module $\Ind_B^A M$ defined by $\Ind_B^A M = M \otimes_B
A$. The functor $\Ind_B^A$ from $B$-modules to $A$-modules is a left
adjoint to the usual restriction functor $\Res^A_B$ from $A$-modules
to $B$-modules, meaning that Frobenius reciprocity holds:
\[
\Hom_A(\Ind_B^A M, N) \cong \Hom_B(M, \Res^A_B N),
\]
where $M$ is any right $B$-module, $N$ any right $A$-module.

We can apply these generalities to the inclusion $\iota: \B_{n-1}
\into \B_n$, which identifies $\B_{n-1}$ with a subalgebra of $\B_n$.
Wenzl \cite{Wenzl} observed that $\xi_n \B_n = \xi_n \B_{n-1}$ and
also that the map
\begin{equation}\label{eq:xi_n-iso}
  \B_{n-1} \to \xi_n \B_n \text{ defined by } x \mapsto \xi_n x
\end{equation}
gives an isomorphism $\B_{n-1} \cong \xi_n \B_n$ of
$(\B_{n-2},\B_{n-1})$-bimodules. Note that $\xi_n \B_{n-1}$ is a left
$\B_{n-2}$-module since $\xi_n$ commutes with $\B_{n-2}$, by
\eqref{eq:xi_n-commutes}. Let $\lambda \vdash k$ where $k<n$ and $k$
has the same parity as $n$. If we restrict the $\B_n$-module
isomorphism
\begin{equation*}
  M^{(\lambda,n)} \cong \G_{n-2}(M^{(\lambda,n-2)}) =
  M^{(\lambda,n-2)} \otimes_{\B_{n-2}} \xi_n \B_n
\end{equation*}
to $\B_{n-1}$, it follows that
\begin{equation}
  \Res^{\B_n}_{\B_{n-1}} M^{(\lambda,n)} \cong \Ind_{\B_{n-2}}^{\B_{n-1}} M^{(\lambda,n-2)}
\end{equation}
as right $\B_{n-1}$-modules. In light of Frobenius reciprocity this
says that
\[
\Hom_{\B_{n-1}}(\Res M^{(\lambda,n)}, M^{(\mu,n-1)}) \cong
\Hom_{\B_{n-2}}(M^{(\lambda,n-2)}, \Res M^{(\mu,n-1)})
\]
for any $\mu \vdash l \le n-1$ where $l$ has the same parity as
$n-1$. Here, we omitted the sub and superscripts on the restriction
functors for readability. Since the algebras are semisimple,
this says that
\begin{equation}\label{eq:res-reciprocity}
[\Res M^{(\lambda,n)}: M^{(\mu,n-1)}]_{n-1} = [\Res M^{(\mu,n-1)}:
  M^{(\lambda,n-2)}]_{n-2\,},
\end{equation}
where we write $[M: S]_n$ for the multiplicity of an irreducible
$\B_n$-module $S$ in another $\B_n$-module $M$. By induction, we may
assume that the right hand side of \eqref{eq:res-reciprocity} is
always 0 or 1. This shows that restriction from $\B_n$ to $\B_{n-1}$
is multiplicity-free, at least for the case of $k<n$.

If $k=n$ and $\lambda \vdash n$, then $M^{(\lambda,n)} \cong S^\lambda$
with $\B_n\xi_n \B_n$ acting trivially. That is, its restriction to
$\B_{n-1}$ is a module with $\B_{n-1} \xi_{n-1} B_{n-1} \subset
\B_n\xi_n \B_n$ acting trivially, so the restriction is a $\Bbbk
\Sym_{n-1}$-module. This means that the restriction rule in this case
is the same as the usual restriction rule for symmetric groups (which
is also multiplicity-free). This completes the proof that the family
$\{\B_n \mid n\ge 0\}$ is a multiplicity-free family, in
the sense of Definition \ref{def:mfa}. 

In fact, the above analysis shows that the restriction of an
irreducible $\B_n$-module $M^{(\lambda,n)}$ to $\B_{n-1}$ breaks up
into a direct sum of irreducible $\B_{n-1}$-modules $M^{(\mu,n-1)}$
indexed by all partitions $\mu$ obtained from $\lambda$ by removing or
adding one box. This justifies the branching graph for this family,
which is displayed in Figure \ref{Bratteli-Brauer} below.

\begin{figure}[htb]
\begin{center}
\begin{tikzpicture}[xscale=2.75*\UNIT, yscale=-4.25*\UNIT]
	\coordinate (0) at (0,.5);
	\coordinate (11) at (.8,1.4);
	\foreach \x in {1, ..., 3}{\coordinate (2\x) at (-2+2*\x,2.2);}
	\foreach \x in {1, ..., 5}{\coordinate (3\x) at (-1.6+2.6*\x,3.24);}
	\foreach \x in {1, ..., 8}{\coordinate (4\x) at (-1.9+1.9*\x,4.8);}
	\foreach \x in {1, ..., 11}{\coordinate (5\x) at (-.66+1.66*\x,7.0);}
\draw (0)--(11);
\draw (11)--(21) (11)--(22) (11)--(23);
\draw (21)--(31);
\draw (22)--(31) (22)--(32) (22)--(33);
\draw (23)--(31) (23)--(33) (23)--(34);
\draw (31)--(41) (31)--(42) (31)--(43);
\draw (32)--(42) (32)--(44) (32)--(45);
\draw (33)--(42) (33)--(43) (33)--(45) (33)--(46) (33)--(47);
\draw (34)--(43) (34)--(47) (34)--(48);
\draw (41)--(51);
\draw (42)--(51) (42)--(52) (42)--(53);
\draw (43)--(51) (43)--(53) (43)--(54);
\draw (44)--(52) (44)--(55) (44)--(56);
\draw (45)--(52) (45)--(53) (45)--(56) (45)--(57) (45)--(58);
\draw (46)--(53) (46)--(57) (46)--(59);
\draw (47)--(53) (47)--(54) (47)--(58) (47)--(59) (47)--(510);
\draw (48)--(54) (48)--(510) (48)--(511);
\begin{scope}[every node/.style={fill=white}]
	\node at (0) {\footnotesize$\bb$};
	\node at (11) {\tPART{1}};

	\node at (23) {\tPART{1,1}};
	\node at (22) {\tPART{2}}; 
	\node at (21) {\footnotesize$\bb$};

	\node at (34) {\tPART{1,1,1}};
	\node at (33) {\tPART{2,1}};
	\node at (32) {\tPART{3}};
	\node at (31) {\tPART{1}};

	\node at (48) {\tPART{1,1,1,1}};
	\node at (47) {\tPART{2,1,1}};
	\node at (46) {\tPART{2,2}};
	\node at (45) {\tPART{3,1}};
	\node at (44) {\tPART{4}};
	\node at (43) {\tPART{1,1}};
	\node at (42) {\tPART{2}};
	\node at (41) {\footnotesize$\bb$};

	\node at (511) {\tPART{1,1,1,1,1}};
	\node at (510) {\tPART{2,1,1,1}};
	\node at (59) {\tPART{2,2,1}};
	\node at (58) {\tPART{3,1,1}};
	\node at (57) {\tPART{3,2}};
	\node at (56) {\tPART{4,1}};
	\node at (55) {\tPART{5}};
	\node at (54) {\tPART{1,1,1}};
	\node at (53) {\tPART{2,1}};
	\node at (52) {\tPART{3}};
	\node at (51) {\tPART{1}};
\end{scope}
\node at (1,7.5) {$\vdots$};
\foreach \x in {8,15} {\node at (\x , 7.6) {$\ddots$}; }
\end{tikzpicture}
\end{center}
\caption{Branching graph for the family $\{ \B_n \}$}
\label{Bratteli-Brauer}
\end{figure}
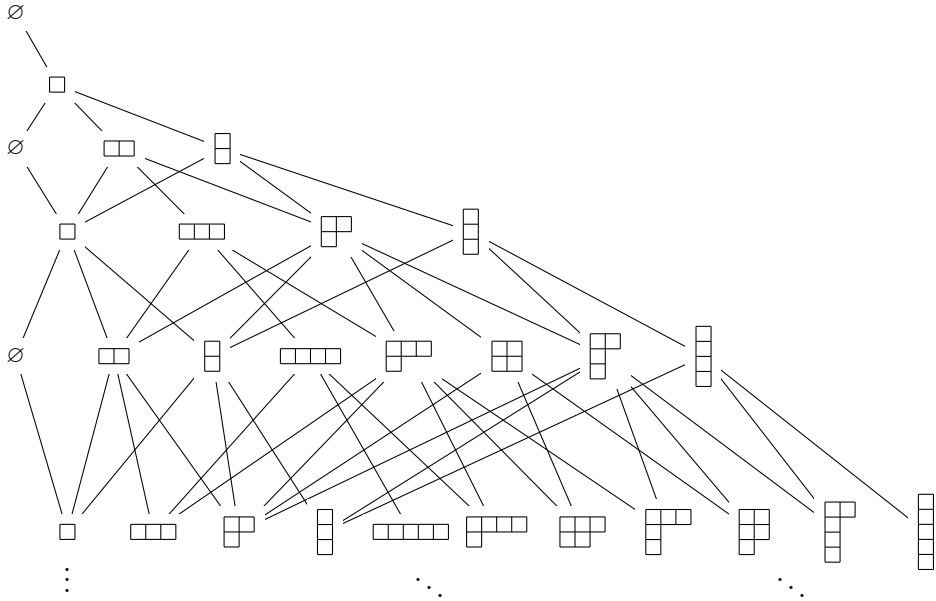

Since $\{ \B_n \mid n \geq0 \}$ is a multiplicity-free family, each
$\B_n$ has canonical idempotents $\idem_\T$ given by Definition
\ref{def:idem-T}. These idempotents are indexed by paths $\T$ of
length $n$ in the branching graph. The set $\Tab(n)$ may be
identified with the set of \demph{up-down tableaux}, which are
sequences of partitions of the form
\begin{equation}\label{eq:up-down-tableau}
  \T = (\lambda_0, \lambda_1, \cdots, \lambda_{n-1}, \lambda_{n})
\end{equation}
such that $\lambda_0 = \bb$ and, for each $k$, the partition
$\lambda_{k+1}$ is obtainable from the preceding partition
$\lambda_{k}$ by adding or removing exactly one box.

We wish to compute the idempotents $\idem_\T$ by means of a sequence
of JM-elements, according to the results of Section \ref{sec:jm}.
Following Nazarov \cite{Nazarov}, we define elements $J_k \in \B_k$
($k\ge1$) by
\begin{equation}
  J_k = \sum_{i = 1}^{k-1} (i,k) - \sum_{i = 1}^{k-1}
  \overline{(i,k)}.
\end{equation}
We define $J_1$ as zero. Our definition of these elements differs
slightly from Nazarov's, in that we have removed an unnecessary shift
by $(\parm-1)/2$.  For any $n \in \N$, the elements $J_1, \dots, J_n$
may be regarded as elements of $\B_n$ by means of the embeddings $\B_1
\subset \cdots \subset \B_{n-1} \subset \B_n$. The following easy
results can be checked by direct computations.

\begin{lem}[\cite{Nazarov}*{Lemma~2.1}]\label{lem:Naz2.1}
  For any $k = 1, \dots, n$, the element $J_k$ commutes with any $b \in
  \B_{n-1}$. Hence, the elements $J_1, \dots, J_n$ pairwise commute in
  $\B_n$.
\end{lem}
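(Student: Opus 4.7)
The plan is to reduce to commutation with a small generating set. Since $\B_{k-1}$ is generated, as a subalgebra of $\B_k$, by the transpositions $s_i = (i,i{+}1)$ and the horizontal generators $e_i = \overline{(i,i{+}1)}$ for $1 \le i \le k-2$, it suffices to show that $J_k$ commutes with each such $s_i$ and $e_i$. Once this is done, the pairwise commutation statement $J_i J_k = J_k J_i$ for $i<k$ is immediate, since $J_i \in \B_i \subseteq \B_{k-1}$.

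For commutation with $s_i$ (where $i \le k-2$), I would use a symmetry argument. Since $i \le k-2$, the index $k$ is fixed by $s_i$, so conjugation sends $(l,k) \mapsto (s_i(l),k)$ and $\overline{(l,k)} \mapsto \overline{(s_i(l),k)}$. This merely swaps the two summands indexed by $l=i$ and $l=i{+}1$ in the defining sum of $J_k$, leaving all other summands fixed. Hence $s_i J_k s_i = J_k$, equivalently $s_i J_k = J_k s_i$.

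For commutation with $e_i$ (where $i \le k-2$) I would turn to direct diagram calculus. For every $l \in \{1,\dots,k-1\} \setminus \{i,i{+}1\}$, the diagrams $(l,k)$ and $\overline{(l,k)}$ involve only vertices at positions disjoint from $\{i,i{+}1\}$ (note that $k \ne i,i{+}1$ because $i \le k-2$), and hence commute with $e_i$. Thus only the four $l \in \{i,i{+}1\}$ terms require attention. The key identities, verified by tracing the paths through the middle row of the stacked diagrams, are
\[
  e_i\,(i,k) \;=\; e_i\,\overline{(i{+}1,k)}, \qquad e_i\,(i{+}1,k) \;=\; e_i\,\overline{(i,k)},
\]
together with their mirror identities $(i,k)\,e_i = \overline{(i{+}1,k)}\,e_i$ and $(i{+}1,k)\,e_i = \overline{(i,k)}\,e_i$. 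Granted these, the sum of the $l \in \{i,i{+}1\}$ contributions to $e_i J_k$ collapses to
\[
  e_i\overline{(i{+}1,k)} + e_i\overline{(i,k)} - e_i\overline{(i,k)} - e_i\overline{(i{+}1,k)} \;=\; 0,
\]
and the analogous cancellation occurs in $J_k e_i$. Consequently both $e_i J_k$ and $J_k e_i$ reduce to the common expression $\sum_{l \ne i,i{+}1}\bigl[e_i(l,k) - e_i\overline{(l,k)}\bigr]$, which proves $[e_i,J_k]=0$.

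The main obstacle is verifying the four key diagram identities. Each is a mechanical but delicate path-trace: stacking $e_i$ on top of $(i,k)$, the bottom horizontal edge of $e_i$ is rerouted through the $(i,k)$-transposition to produce a horizontal bottom edge of the composite joining positions $i{+}1$ and $k$, while the through-edges of $(i,k)$ combine with the identity edges of $e_i$ at position $k$ to yield a through-edge $\{\text{top-}k,\text{bottom-}i\}$; stacking $e_i$ on top of $\overline{(i{+}1,k)}$ produces the very same $k$-diagram by an analogous trace. Crucially, no closed interior loop appears in either composite, so no factor of $\parm$ is introduced, and the other three identities follow by entirely analogous tracings.
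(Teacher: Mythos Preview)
Your argument is correct. The paper itself omits the proof entirely, writing only ``We omit the easy proof, which is given in \cite{Nazarov},'' so there is no detailed proof to compare against; your write-up supplies exactly the kind of direct verification the paper defers to Nazarov. Two remarks are worth making.

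First, you have (rightly) read the statement as ``$J_k$ commutes with every $b \in \B_{k-1}$'' rather than the literal ``$b \in \B_{n-1}$'' printed in the paper. The printed version cannot be what is intended: for instance $J_{n-1}$ does not commute with $s_{n-2} \in \B_{n-1}$. Your reading is the one that makes the ``Hence'' clause work and is the one used downstream in the proof of Proposition~\ref{prop:Naz2.3}(a).

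Second, the diagram identities you isolate,
\[
  e_i\,(i,k) = e_i\,\overline{(i{+}1,k)}, \qquad e_i\,(i{+}1,k) = e_i\,\overline{(i,k)},
\]
are precisely the identities the paper later records (with different index names) in its proof of Proposition~\ref{prop:Naz2.3}(c). So your approach is fully in line with the paper's toolkit; you are simply invoking those identities one step earlier than the paper chooses to display them. The cancellation you describe for the $l \in \{i,i{+}1\}$ terms is clean and correct, and your observation that the remaining summands commute with $e_i$ because their supports are disjoint from $\{i,i{+}1\}$ completes the argument.
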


We omit the easy proof, which is given in \cite{Nazarov}. The lemma
immediately gives the following commutation relations between the
$J_k$ and the generators $s_i$, $e_i$ defined in
\eqref{eq:s_i-e_i}. Note that the relations in part (c) differ from
those given by Nazarov because our definition of $J_k$ differs
slightly from his.

\begin{prop}[\cite{Nazarov}*{Prop.~2.3}]\label{prop:Naz2.3}
  The following relations hold in the algebra $\B_n$:
  \begin{enumerate}
   \item\  $s_k J_l = J_l s_k$,\quad $e_k J_l = J_l e_k$ \qquad $(l \ne
     k, k+1)$.  
   \item\  $s_k J_k - J_{k+1} s_k = e_k - 1$, \quad $s_k J_{k+1} - J_k
     s_k = 1 - e_k$.
   \item\  $e_k(J_k+J_{k+1}) = (1-\parm)e_k = (J_k+J_{k+1}) e_k$.
  \end{enumerate}
\end{prop}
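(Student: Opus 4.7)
My plan is to verify each of (a), (b), (c) by direct expansion using the definition $J_k = \sum_{i<k}[(i,k) - \ov{(i,k)}]$, together with Lemma \ref{lem:Naz2.1} and a few short diagrammatic commutation identities inside $\B_n$.

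For (a) I split into two cases. If $l \geq k+2$, then $s_k, e_k \in \B_{k+1} \subseteq \B_{l-1}$, so Lemma \ref{lem:Naz2.1} yields $s_k J_l = J_l s_k$ and $e_k J_l = J_l e_k$. If $l \leq k-1$, then $J_l \in \B_l$ is a combination of $n$-diagrams whose only non-identity edges involve positions $1, \dots, l \leq k-1$, while $s_k$ and $e_k$ have non-identity edges only at positions $k, k+1$; two Brauer diagrams supported on disjoint sets of positions commute, which settles this case.

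For (b), I expand $s_k J_k$ and $J_{k+1} s_k$ term by term and cancel. The crucial diagrammatic identities, for $i<k$, are
\begin{equation*}
  s_k(i,k) = (i,k+1)\, s_k \qand s_k\, \ov{(i,k)} = \ov{(i,k+1)}\, s_k.
\end{equation*}
The first is the $\Sym_n$ conjugation identity, and the second is a short strand-tracing exercise in $\B_n$. They cause the $i<k$ contributions to $s_k J_k - J_{k+1} s_k$ to cancel pairwise, leaving only the $i=k$ terms arising from $J_{k+1}s_k$, namely $-(k,k+1)s_k + \ov{(k,k+1)}s_k = -s_k^2 + e_k s_k = -1 + e_k$, using the Brauer relations $s_k^2 = 1$ and $e_k s_k = e_k$. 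The companion identity $s_k J_{k+1} - J_k s_k = 1 - e_k$ then follows by multiplying the first on the left and right by $s_k$ and using $s_k e_k s_k = e_k$.

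For (c), I similarly expand $e_k(J_k + J_{k+1})$. The needed identities, for $i<k$, are
\begin{equation*}
  e_k(i,k) = e_k\, \ov{(i,k+1)} \qand e_k(i,k+1) = e_k\, \ov{(i,k)},
\end{equation*}
verified by another strand-tracing exercise. These make the sum $e_k \sum_{i<k}[(i,k) + (i,k+1) - \ov{(i,k)} - \ov{(i,k+1)}]$ telescope to zero, and the surviving $i=k$ contribution is $e_k(k,k+1) - e_k\,\ov{(k,k+1)} = e_k s_k - e_k^2 = e_k - \parm e_k = (1-\parm)e_k$. The right-hand identity $(J_k + J_{k+1})e_k = (1-\parm)e_k$ follows by the symmetric calculation (stacking $e_k$ below instead of above), or by applying the standard top/bottom-reflection anti-involution of $\B_n$ to the identity just proved. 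The main obstacle in the whole proof is the verification of the four diagrammatic commutation identities above; each reduces to carefully composing two Brauer diagrams and following strands, and is routine but tedious.
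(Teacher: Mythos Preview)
Your proof is correct and follows essentially the same route as the paper's: the same diagrammatic identities for part (c), the same case split for part (a), and for part (b) the paper packages your term-by-term cancellation as the recursion $J_{k+1} = s_k J_k s_k + s_k - e_k$, which is exactly the content of your identities $s_k(i,k) = (i,k+1)s_k$ and $s_k\,\ov{(i,k)} = \ov{(i,k+1)}\,s_k$.
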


\begin{proof}
The commutation relations (a) follow from Lemma \ref{lem:Naz2.1} if
$l>k+1$ and from the definitions otherwise. Furthermore, it is easy to
check from the definition that the elements $J_k$ can be defined by
the recursion
\[
  J_1 = 0, \qquad J_{k+1} = s_kJ_ks_k + s_k - e_k \ \ (k \ge 1). 
\]
This implies the relations (b). Turning to (c), we have by direct
computation for any $l = 1, \dots, k-1$ the equalities
\[
  e_k\, (k,l) = e_k\, \ov{(k+1,l)} \quad\text{and}\quad e_k\,
  \ov{(k,l)} = e_k\, (k+1,l).
\]
Combining these equalities with the obvious identities $e_k s_k =
e_k$, $e_k^2 = \parm e_k$ and the definition of the $J_k$ produces the
leftmost equality in (c). The rightmost equality in proved similarly.
\end{proof}

Relations (a) and (b) of the proposition immediately imply the following.

\begin{cor}\cite{Nazarov}*{Cor.~2.4}\label{cor:Naz2.4}
  The sum $z_n = J_1 + \cdots + J_{n-1} + J_n$ is a central element of
  $\B_n$.
\end{cor}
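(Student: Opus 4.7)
The plan is to prove $z_n \in Z(\B_n)$ by verifying that $z_n$ commutes with each of the standard generators $s_1,\dots,s_{n-1}$ and $e_1,\dots,e_{n-1}$ of $\B_n$. Since these elements generate the algebra, this will suffice.

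Fix $k \in \{1,\dots,n-1\}$ and split $z_n = (J_k + J_{k+1}) + \sum_{l \neq k,\,k+1} J_l$. By relation (a) of Proposition~\ref{prop:Naz2.3}, every summand $J_l$ with $l\notin\{k,k+1\}$ commutes with both $s_k$ and $e_k$, so those terms contribute nothing to any commutator. Hence the entire proof reduces to showing that the ``dangerous'' partial sum $J_k + J_{k+1}$ commutes with $s_k$ and with $e_k$.

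For $s_k$, I would add the two equalities in relation (b),
\[
s_k J_k - J_{k+1} s_k = e_k - 1 \qand s_k J_{k+1} - J_k s_k = 1 - e_k,
\]
so that the right-hand sides cancel, leaving $s_k(J_k + J_{k+1}) = (J_k + J_{k+1})s_k$. For $e_k$, relation (c) directly supplies the two-sided equality $e_k(J_k + J_{k+1}) = (1-\parm)\,e_k = (J_k + J_{k+1})\,e_k$. Combining these with the (a)-contributions yields $s_k z_n = z_n s_k$ and $e_k z_n = z_n e_k$, completing the proof.

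There is no real obstacle: the corollary is a one-line bookkeeping consequence of Proposition~\ref{prop:Naz2.3}. The conceptual content is the observation that the only $l$ for which $J_l$ fails to commute with $s_k$ or $e_k$ is $l\in\{k,k+1\}$, and the partial sum $J_k + J_{k+1}$ has been engineered precisely so that these obstructions cancel pairwise (via (b) for $s_k$, via the ready-made symmetry in (c) for $e_k$).
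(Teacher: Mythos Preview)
Your proof is correct and follows the same route as the paper: verify commutation of $z_n$ with each generator $s_k$, $e_k$ using Proposition~\ref{prop:Naz2.3}. The paper's one-line justification cites only relations (a) and (b); this suffices because once $z_n$ commutes with $s_k$ and with each $J_l$ (Lemma~\ref{lem:Naz2.1}), relation (b) rewrites $e_k = 1 + s_kJ_k - J_{k+1}s_k$ as a polynomial in elements already known to commute with $z_n$, so invoking (c) directly as you do is a harmless shortcut rather than a genuine difference.
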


It remains to compute the eigenvalues of the $J_k$ on the irreducible
modules and prove that the sequence $(J_k)_{k \in \N}$ is separating.

\begin{prop}\label{prop:z_n-action}
  Let $\lambda \vdash k$ where $k=n-2l$ and $0 \le 2l \le n$.  Suppose
  that $a_\lambda$ is the eigenvalue of equation
  \eqref{eq:a_lambda-sum}. Then the central element $z_n = J_1 +
  \cdots + J_n$ of $\B_n$ acts on $M^{(\lambda,n)} = S^\lambda
  \otimes_{\Bbbk \Sym_k} V^n_k$ as the scalar $\beta_\lambda =
  a_\lambda + l(1-\parm)$.
\end{prop}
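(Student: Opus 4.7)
The plan is to induct on $l$. The base case $l=0$ is essentially the symmetric-group computation already in hand: when $n=k$, $M^{(\lambda,n)} \cong S^\lambda$ with $\B_n\xi_n\B_n$ acting trivially, so every horizontal-edge diagram $\ov{(i,j)}$ kills $S^\lambda$, the elements $J_k$ restrict to $\sum_{i<k}(i,k)$, and hence $z_n$ restricts to the class sum of all transpositions. By Proposition \ref{prop:z_n-Sym} this acts as $a_\lambda = \beta_\lambda$ (since $l=0$).

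For the inductive step, I would use the decomposition $z_n = z_{n-2} + (J_{n-1}+J_n)$ together with the commutation identity $(J_{n-1}+J_n)\xi_n = (1-\parm)\xi_n = \xi_n(J_{n-1}+J_n)$, which follows at once from Proposition \ref{prop:Naz2.3}(c) on multiplying through by $\parm^{-1}$. Since $z_n$ is central (Corollary \ref{cor:Naz2.4}) and $M^{(\lambda,n)}$ is irreducible, Schur's Lemma says that $z_n$ acts as a single scalar; so it suffices to compute that scalar on any nonzero element. For $l\geq 1$ the subspace $M^{(\lambda,n)}\xi_n$ is nonzero, being isomorphic to $M^{(\lambda,n-2)}$ via the bimodule isomorphism \eqref{eq:F_nV}, so I can evaluate on an element of the form $w = m\xi_n$.

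The key computation is then
\[
w\cdot z_n \;=\; w\cdot z_{n-2} \,+\, w\cdot (J_{n-1}+J_n) \;=\; w\cdot z_{n-2} \,+\, (1-\parm)\,w,
\]
where the second equality uses $(J_{n-1}+J_n)\xi_n = (1-\parm)\xi_n$. For the first term, the right action of $z_{n-2}\in\B_{n-2}$ preserves $M^{(\lambda,n)}\xi_n$ (since $\xi_n$ commutes with $\B_{n-2}$ by \eqref{eq:xi_n-commutes}), and under the isomorphism $M^{(\lambda,n)}\xi_n \cong M^{(\lambda,n-2)}$ of right $\B_{n-2}$-modules this action corresponds to the right action of $z_{n-2}$ on $M^{(\lambda,n-2)}$, which by the inductive hypothesis is multiplication by $a_\lambda + (l-1)(1-\parm)$. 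Summing gives $\beta_\lambda w$, completing the induction.

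The main obstacle is the transfer of the $z_{n-2}$-action across the isomorphism $M^{(\lambda,n)}\xi_n \cong M^{(\lambda,n-2)}$; this requires carefully invoking \eqref{eq:xi_n-commutes} and the fact that \eqref{eq:F_nV} is a $\B_{n-2}$-bimodule isomorphism. Once that bookkeeping is in place, the entire argument collapses to the single Nazarov relation $(J_{n-1}+J_n)\xi_n = (1-\parm)\xi_n$, which explains geometrically why each ``closure'' of a horizontal pair at the top of the diagram contributes exactly $(1-\parm)$ to $\beta_\lambda$, accounting for the term $l(1-\parm)$.
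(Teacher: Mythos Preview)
Your argument is correct and follows essentially the same route as the paper: both split $z_n = z_{n-2} + (J_{n-1}+J_n)$, handle the second summand via Nazarov's relation in Proposition~\ref{prop:Naz2.3}(c), and transfer the first summand through the isomorphism $M^{(\lambda,n)}\xi_n \cong M^{(\lambda,n-2)}$, the only cosmetic difference being that you induct on $l$ while the paper inducts on $n$. One tiny slip: for right modules with $w=m\xi_n$ you actually need the identity $\xi_n(J_{n-1}+J_n)=(1-\parm)\xi_n$ rather than the one you wrote, but Proposition~\ref{prop:Naz2.3}(c) gives both.
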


\begin{proof}
This argument follows the proof of
\cite{Goodman-Graber:JM}*{Theorems~5.3, 5.1}. We proceed by induction
on $n$. The base cases $n=0$, $1$ are trivial, so assume that $n \ge
2$. There are two cases.

If $l=0$, then $\lambda \vdash n$ and $M^{(\lambda,n)} \cong
S^\lambda$, with the ideal $\B_n \xi_n \B_n$ acting trivially. We can
write
\[
  z_n = z^{\Sym_n}_n - \overline{z}_n,
\]
where $z^{\Sym_n}_n = \sum_{i<j} (i,j)$ is the sum of all the
transpositions in $\Sym_n$ and $\ov{z}_n = \sum_{i<j} \ov{(i,j)} \in \B_n
\xi_n \B_n$. It follows from Proposition \ref{prop:z_n-Sym} that $z_n$
acts as the scalar $a_\lambda$, so the proof is complete in case $l=0$. 

Now suppose $l > 0$. In this case we use the isomorphism 
\[
  M^{(\lambda,n)} \cong \G_{n-2}(M^{(\lambda,n-2)}) = S^\lambda
  \otimes_{\Bbbk\Sym_k} V^{n-2}_k \otimes_{\xi_n \B_n \xi_n} \xi_n
  \B_n
\] 
from the proof of Proposition \ref{prop:irr-B_n}. Since the central
element $z_n$ acts by a fixed scalar on the entire module, it suffices
to compute its eigenvalue on any nonzero vector in the module, so we
consider its action on $u \otimes v \otimes \xi_n$, where $0 \ne u
\otimes v \in S^\lambda \otimes_{\Bbbk \Sym_k} V^{n-2}_k$. By induction we have
\[
  (u \otimes v) z_{n-2} = \big( a_\lambda + (l-1)(1-\parm) \big) \, u
\otimes v.
\]
It follows that
\begin{align*}
  (u \otimes v \otimes \xi_n) z_n &= (u \otimes v \otimes
  \xi_n)(z_{n-2} + J_{n-1} + J_n) \\ &= (u \otimes v \otimes
  \xi_n)z_{n-2} + (u \otimes v \otimes \xi_n)(J_{n-1} + J_n).
\end{align*}
By Proposition \ref{prop:Naz2.3}(a) we know that $\xi_n =
\frac{1}{\parm} e_{n-1}$ commutes with $z_{n-2}$, so the first term in
the right hand side of the above is
\[
  (u \otimes v \otimes \xi_n)z_{n-2} = (u \otimes v)z_{n-2} \otimes
\xi_n = \big( a_\lambda + (l-1)(1-\parm) \big) \, u\otimes v \otimes \xi_n.
\]
The second term in the right hand side is computed by Proposition
\ref{prop:Naz2.3}(c) as
\[
  (u \otimes v \otimes \xi_n)(J_{n-1} + J_n) = (1-\parm)\, u \otimes v
\otimes \xi_n.
\]
Hence, by combining the equations in the last three displays, we
obtain the equality
\[
  (u \otimes v \otimes \xi_n) z_n = \big( a_\lambda + l(1-\parm) \big)
\, u \otimes v \otimes \xi_n
\]
and the proof is complete.
\end{proof}

This result will now be applied to compute the eigenvalues of the
$J_k$ on the Gelfand--Tsetlin basis of the irreducible $\B_n$-modules.

\begin{prop}\label{prop:c_T}
  Suppose that $\lambda \in \Irr(n)$ and $\{v_\T \mid \T \mapsto
  \lambda\}$ is the Gelfand--Tsetlin basis of $M^{(\lambda,n)}$. Let
  $\T = (\lambda_0,\lambda_1,\ldots, \lambda_n)$ be an up-down tableau
  with $\lambda_n = \lambda$. Suppose that $\lambda_k$ and
  $\lambda_{k-1}$ differ by a box in row $i$ and column $j$. Then the
  eigenvalue of $J_k$ on the eigenvector $v_\T$ is
  \[
    c_\T(k) = 
    \begin{cases}
       j-i & \text{ if $\lambda_k$ has one more box than
         $\lambda_{k-1}$}, \\ 
       (1-\parm) + i-j & \text{ if $\lambda_k$ has one
         fewer box than $\lambda_{k-1}$}.
    \end{cases}
  \]
\end{prop}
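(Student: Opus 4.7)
The plan is to follow the same strategy used for symmetric groups in Proposition \ref{prop:c_T-symm}, combining Proposition \ref{prop:central-J_n-property} with the scalar formula of Proposition \ref{prop:z_n-action}. First, I would note that $(J_k)_{k \in \N}$ is an additively central sequence in the family $\{\B_n\}$: by definition $J_k \in \B_k$, while Corollary \ref{cor:Naz2.4} asserts that the partial sums $z_k = J_1 + \cdots + J_k$ lie in $Z(\B_k)$. Consequently, Proposition \ref{prop:central-J_n-property} applies and gives
\[
c_\T(k) \,=\, \beta_{\lambda_k} - \beta_{\lambda_{k-1}},
\]
where $\beta_\mu$ denotes the scalar by which $z_j$ acts on $M^{(\mu,j)}$ for $\mu \vdash j - 2l$. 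By Proposition \ref{prop:z_n-action}, $\beta_\mu = a_\mu + l(1-\parm)$, where $a_\mu = \sum_{(i,j)\in\mu}(j-i)$ by \eqref{eq:a_lambda-sum}.

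Next I would split into the two cases appearing in the statement. In the \emph{add} case, a box in row $i$, column $j$ is adjoined to $\lambda_{k-1}$ to form $\lambda_k$. Writing $\lambda_{k-1} \vdash (k-1)-2l$, one has $\lambda_k \vdash k - 2l$ with the \emph{same} value of $l$. Therefore
\[
c_\T(k) = \bigl(a_{\lambda_k} + l(1-\parm)\bigr) - \bigl(a_{\lambda_{k-1}} + l(1-\parm)\bigr) = a_{\lambda_k} - a_{\lambda_{k-1}} = j - i,
\]
the last equality holding because the sums defining $a_{\lambda_k}$ and $a_{\lambda_{k-1}}$ differ precisely by the content of the added box. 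In the \emph{remove} case, write $\lambda_{k-1} \vdash (k-1) - 2l$, so $\lambda_k \vdash k - 2(l+1)$ and the level parameter increases by $1$. Hence
\[
c_\T(k) = \bigl(a_{\lambda_k} + (l+1)(1-\parm)\bigr) - \bigl(a_{\lambda_{k-1}} + l(1-\parm)\bigr) = (a_{\lambda_k} - a_{\lambda_{k-1}}) + (1-\parm) = (1-\parm) + i - j,
\]
since removing the box at position $(i,j)$ decreases $a_\mu$ by $j-i$.

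There is no real obstacle here beyond careful bookkeeping of the parity/level parameter $l$ as boxes are added or removed along the path $\T$; the conceptual work has already been done in Proposition \ref{prop:central-J_n-property} and Proposition \ref{prop:z_n-action}. Once the proposition is established, an immediate corollary (analogous to Corollary \ref{cor:Sym_n-separation}) is that $(J_k)$ is a JM-sequence: content vectors separate distinct paths since the first place two up-down tableaux diverge corresponds to distinct addable/removable boxes of a common shape, and by Remark \ref{rmk:addable} such boxes have distinct contents, which by the formula above yield distinct values of $c_\T(k)$ (the shift $(1-\parm)$ for removals versus additions creates no collision because $\parm \notin \Z$).
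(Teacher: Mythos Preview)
Your proposal is correct and follows essentially the same approach as the paper: both combine the additive centrality of $(J_k)$ (Corollary \ref{cor:Naz2.4}) with Proposition \ref{prop:central-J_n-property} and the scalar computation of Proposition \ref{prop:z_n-action}, then split into the add/remove cases. The only cosmetic difference is that the paper frames the argument as induction on $n$, invoking Proposition \ref{prop:GJM-separation}(a) to reduce to the single value $c_\T(n)$, whereas you apply Proposition \ref{prop:central-J_n-property} directly at each level $k$; the content is the same.
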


\begin{proof}
Set $z_k = \sum_{l=1}^k J_l$ and note that $J_k = z_k - z_{k-1}$, for
any $1\leq k\leq n$. 

We proceed by induction on $n$. For $n=1$ the result is clear:
$c_\T(1) = 0$ as $J_1 = 0$. Let $n > 1$ and let $\T \in \Tab(n)$.  By
the inductive hypothesis, $c_{\ov{\T}}(k)$ has the desired value for
any $k \le n-1$. By Proposition \ref{prop:GJM-separation}(a), $c_\T(k)
= c_{\ov{\T}}(k)$ for all $k<n$, so $c_\T(k)$ has the desired value
for all $k<n$. Thus, it suffices to compute the value $c_\T(n)$.

By Propositions \ref{prop:central-J_n-property} and
\ref{prop:z_n-action} we have $c_\T(n) = \beta_\lambda - \beta_\mu$,
where $\ov{\T} \mapsto \mu$, and $\beta_\lambda = a_\lambda +
l(1-\parm)$. There are two cases to consider: if $\lambda = \lambda_n$
has one more box or one fewer box than $\mu = \lambda_{n-1}$. In the
first case, Propositions \ref{prop:z_n-action} and \ref{prop:c_T-symm}
give us $\beta_\mu = a_\mu + l(1-\parm)$, and
\[
  c_\T(n) = \beta_\lambda - \beta_\mu = j-i. 
\]
In the second case, $\beta_\mu = a_\mu + (l-1)(1-\parm)$, and hence
\[
  c_\T(n) = \beta_\lambda - \beta_\mu = (1-\parm) + i-j.
\]
This complete the proof.
\end{proof}

\begin{cor}\label{cor:Brauer-JM-sep}
  The sequence $(J_k \mid k \in \N)$ is a JM-sequence in the sense of
  Definition \ref{def:JM-defn}.
\end{cor}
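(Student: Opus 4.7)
The plan is to verify the two defining properties of a JM-sequence from Definition \ref{def:JM-defn} separately, leaning on the two preceding Propositions for the data we need.

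\emph{Additively central.} This is immediate from Corollary \ref{cor:Naz2.4}: the partial sum $z_n = J_1+\cdots+J_n$ is a central element of $\B_n$ for every $n$, and moreover each $J_k \in \B_k \subset \B_n$ belongs to $\X_n$ since $J_k = z_k - z_{k-1}$ with $z_k \in Z(\B_k)$ and $z_{k-1} \in Z(\B_{k-1})$.

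\emph{Separating.} The plan is to invoke Proposition \ref{prop:separation-generation-equivalent} by showing that the content vectors $c_\T$ (computed explicitly in Proposition \ref{prop:c_T}) distinguish distinct up-down tableaux. So suppose $\TS,\T \in \Tab(n)$ with $\TS \ne \T$, and let $k$ be the smallest index at which they differ, so that $\TS[k-1] = \T[k-1] = \nu$ (say) but $\TS[k] \ne \T[k]$. I need to argue that $c_\TS(k) \ne c_\T(k)$. At step $k$, each of $\TS$ and $\T$ either adds or removes a box from $\nu$, giving four cases by Proposition \ref{prop:c_T}:
\begin{itemize}
\item[(i)] Both add a box. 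Then $c_\TS(k)$ and $c_\T(k)$ are contents $j-i$ of two distinct addable boxes of $\nu$; by Remark \ref{rmk:addable} these are distinct.
\item[(ii)] Both remove a box. Then $c_\TS(k)$ and $c_\T(k)$ have the form $(1-\parm)+i-j$ for two distinct removable boxes of $\nu$; again by Remark \ref{rmk:addable} (applied to $\nu$ and offset by the constant $1-\parm$) they are distinct.
\item[(iii)] One adds and the other removes. Then one content is an integer $j-i$, while the other is $(1-\parm)+i'-j'$. Equality would force $\parm = 1 + (i'-j') - (j-i) \in \Z$, contradicting our standing assumption that $\parm$ is not an integer.
\end{itemize}
In all three cases $c_\TS(k) \ne c_\T(k)$, so $c_\TS \ne c_\T$, and Proposition \ref{prop:separation-generation-equivalent} yields $\gen{J_1,\ldots,J_n} = \X_n$.

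The main obstacle is case (iii): the mixed add/remove case is precisely where the special non-integrality hypothesis on $\parm$ enters, and without it the sequence would fail to separate (the content of an addable box for one tableau could coincide with the shifted content of a removable box for another). The other two cases reduce cleanly to the symmetric-group argument of Corollary \ref{cor:Sym_n-separation} by way of Remark \ref{rmk:addable}.
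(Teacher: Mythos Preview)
Your proof is correct and follows essentially the same argument as the paper: both verify additive centrality via Corollary~\ref{cor:Naz2.4} and then establish separation by finding the first point of divergence between two paths and splitting into the same three cases (add/add, remove/remove, add/remove), invoking Remark~\ref{rmk:addable} for the first two and the non-integrality of $\parm$ for the third. (The minor slip of writing ``four cases'' before listing three is harmless, since your case~(iii) absorbs both orderings of the mixed case.)
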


\begin{proof} 
Since $J_k = z_k - z_{k-1}$ and $z_k \in Z(\B_k)$ for all 
$1\leq k \leq n$ (Corollary \ref{cor:Naz2.4}), it follows that each 
$J_k\in\X_n$ and that $(J_k)_{k\in\N}$ is additively central. To prove 
that it is also a separating sequence, we use Proposition 
\ref{prop:separation-generation-equivalent}. That is, we verify that 
$\TS = \T$ if and only if $c_\TS = c_\T$. (One direction is automatic.)

Proposition \ref{prop:c_T} computes the content vectors $c_\T =
(c_\T(1), \dots, c_\T(n))$ for each $\T \in \Tab(n)$. If $\T =
(\lambda_0 \to \cdots \to \lambda_n)$, we write $\T[k] = (\lambda_0 \to
\cdots \to \lambda_k)$ for the truncated path. Assume $\TS\neq \T$ are
distinct paths of length $n$ and find the first level $k \leq n$ at
which the paths $\TS, \T$ diverge. So $\TS[k-1] = \T[k-1]$, yet
$\TS[k] \neq \T[k]$.  Let $\TS[k] \mapsto \lambda$ and $\T[k] \mapsto
\nu$ be the terminal shapes of the paths, and let $\T[k-1] \mapsto \mu$.  There are three cases.

Case 1: $\lambda$, $\nu$ are obtained by adding different boxes to $\mu$.
Here $c_\TS(k)$ and $c_\T(k)$ are both computed using the first formula in 
Proposition \ref{prop:c_T}. Appealing to Remark \ref{rmk:addable}, 
we see that $c_\TS(k) \neq c_\T(k)$.

Case 2: $\lambda$, $\nu$ are obtained by removing different boxes from
$\mu$. We must use the second formula in Proposition \ref{prop:c_T}. 
Appealing to Remark \ref{rmk:addable}, we again have $c_\TS(k) \neq c_\T(k)$.

Case 3: One of $\lambda$, $\nu$ is obtained by adding a box and the
other by removing one.  Here $c_\TS(k)$ and $c_\T(k)$ cannot possibly
be equal, as Proposition \ref{prop:c_T} says that one value is an
integer and the other is not (recall that $\delta\in\Bbbk \setminus \Z$).

All cases reach the conclusion that $c_\TS \neq c_\T$, so the
proof is complete.
\end{proof}


\begin{examples}
To avoid ambiguity, we write $\idem^{(n)}(\lambda)$ for the primitive
central idempotent $\idem(\lambda)$ in $\B_n$.  The
$\idem^{(n)}(\lambda)$ can be computed recursively using Theorem
\ref{thm:central-P-recursion} and Proposition \ref{prop:c_T},
referring to the branching graph in Figure \ref{Bratteli-Brauer}. Of course
$\idem^{(1)}(\ttPART{1}) = 1$. 


\smallskip \noindent\emph{Primitive central idempotents for $n=2$:}
\begin{align*}
\idem^{(2)\!}({\bb}) &= P^{\,\bb}_{\ttPART{1}} \,\idem^{(1)}(\ttPART{1}) = P^{\,\bb}_{\ttPART{1}} = {\tfrac{J_2 - 1}{(1-\parm) -1}\cdot \tfrac{J_2 + 1}{(1-\parm) + 1}} \\[.5ex]
\idem^{(2)\!}(\ttPART{2}) &= P^{\,\ttPART{2}}_{\ttPART{1}}\,\idem^{(1)}(\ttPART{1}) = P^{\,\ttPART{2}}_{\ttPART{1}} = {\tfrac{J_2 - (1-\parm)}{1 - (1-\parm)} \cdot \tfrac{J_2 + 1}{1+1}} \\[.5ex]
\idem^{(2)\!}(\ttPART{1,1}) &= P^{\,\ttPART{1,1}}_{\ttPART{1}} \,\idem^{(1)}(\ttPART{1}) = P^{\,\ttPART{1,1}}_{\ttPART{1}}  = {\tfrac{J_2 - (1-\parm)}{-1-(1-\parm)} \cdot \tfrac{J_2 - 1}{-1-1}}. 
\\[1ex]
\intertext{\emph{Primitive central idempotents for $n=3$:}}
\idem^{(3)\!}(\ttPART{1}) &= P^{\,\ttPART{1}}_{\ttPART{2}} \,\idem^{(2)\!}(\ttPART{2}) + P^{\,\ttPART{1}}_{\ttPART{1,1}} \,\idem^{(2)\!}(\ttPART{1,1}) + P^{\,\ttPART{1}}_{\bb} \,\idem^{(2)\!}(\bb) \\[.25ex]
&= {\tfrac{(J_3+1)(J_3-2)}{(\parm+2)(\parm-1)}} \idem^{(2)\!}(\ttPART{2}) 
	+ {\tfrac{(J_3+2)(J_3-1)}{(\parm-1)(\parm-4)}} \idem^{(2)\!}(\ttPART{1,1}) 
	+ 1\cdot\idem^{(2)\!}(\bb)\\[1ex]
\idem^{(3)\!}(\ttPART{3}) &= P^{\,\ttPART{3}}_{\ttPART{2}} \,\idem^{(2)\!}(\ttPART{2}) 
	= {\tfrac{(J_3+\parm)(J_3+1)}{3(\parm+2)}} \idem^{(2)\!}(\ttPART{2})  \\[1ex]
\idem^{(3)\!}(\ttPART{2,1}) &= P^{\,\ttPART{2,1}}_{\ttPART{2}} \,\idem^{(2)\!}(\ttPART{2}) + P^{\,\ttPART{2,1}}_{\ttPART{1,1}} \,\idem^{(2)\!}(\ttPART{1,1}) \\[.25ex]
&= -{\tfrac{(J_3+\parm)(J_3-2)}{3(\parm-1)}} \idem^{(2)\!}(\ttPART{2}) + {\tfrac{(J_3+\parm-2)(J_3+2)}{3(\parm-1)}} \idem^{(2)\!}(\ttPART{1,1}) \\[1ex]
\idem^{(3)\!}({\!}_{\!}\raisebox{-.5ex}{\ttPART{1,1,1}}{\!}_{\!}) &= P^{\,\ttPART{1,1,1}}_{\ttPART{1,1}} \,\idem^{(2)\!}(\ttPART{1,1}) 
	=  - {\tfrac{(J_3+\parm-2)(J_3-1)}{3(\parm-4)}} \idem^{(2)\!}(\ttPART{1,1}) .
\\[1ex]
\intertext{\emph{Primitive central idempotents for $n=4$:}\newline
There are eight idempotents at this level; we compute two of them:}
\idem^{(4)\!}(\ttPART{2}) &= 
	P^{\,\ttPART{2}}_{\ttPART{1}} \,\idem^{(3)\!}(\ttPART{1}) 
	+ P^{\,\ttPART{2}}_{\ttPART{3}} \,\idem^{(3)\!}(\ttPART{3}) 
	+ P^{\,\ttPART{2}}_{\ttPART{2,1}} \,\idem^{(3)\!}(\ttPART{2,1}) \\[.25ex]
&= {\tfrac{(J_4+\parm-1)(J_4+1)}{2\parm}}\idem^{(3)\!}(\ttPART{1}) 
	+ {\tfrac{(J_4+1)(J_4-3)}{(\parm+4)\parm}}\idem^{(3)\!}(\ttPART{3}) 
	- {\tfrac{(J_4+\parm)(J_4^2-4)J_4}{2(\parm-2)(\parm-4)\parm}}\idem^{(3)\!}(\ttPART{2,1})\\[1ex]
\idem^{(4)\!}(\ttPART{3,1}) &= P^{\,\ttPART{3,1}}_{\ttPART{2,1}} \,\idem^{(3)\!}(\ttPART{2,1}) 
	+ P^{\,\ttPART{3,1}}_{\ttPART{3}} \,\idem^{(3)\!}(\ttPART{3}) \\[.15ex]
	&= {\tfrac{(J_4+\parm)(J_4+\parm-2)(J_4+2)J_4}{8(\parm+2)\parm}} \,\idem^{(3)\!}(\ttPART{2,1})
	- {\tfrac{(J_4+\parm+1)(J_4-3)}{4\parm}} \,\idem^{(3)\!}(\ttPART{3}).
\end{align*}
We note that the summands in each $\idem^{(n)}(\lambda)$ are the
various $\idem_\T$ in that block, so the $\idem_\T$ are recoverable
from the above expressions.
\end{examples}

\begin{rmk}
  The recent preprint \cite{King-Martin-Parker} explores a completely
  different technique for computing central idempotents in semisimple
  Brauer algebras. Their technique is specific to that context.
\end{rmk}

\appendix
\renewcommand{\theequation}{\arabic{equation}} 
\section{Primitive central idempotents via trace characters}
\label{sec:pci}
\noindent
We give a brief exposition of another approach to computing the
primitive central idempotents in a split semisimple finite
dimensional algebra $\A$. The approach generalizes a classical formula 
of Frobenius for the central idempotents of group algebras $\C G$ for finite 
groups $G$ (see Corollary \ref{cor:Frobenius} below) in terms of the 
irreducible characters of $G$. 
We show that the irreducible trace characters of $\A$ still uniquely determine
its central idempotents, provided its defining field $\Bbbk$ has characteristic zero. 

Here, it is not necessary that $\A$ fits into a multiplicity-free family. The requirement on $\Bbbk$ guarantees invertibility of the $(\dim \A)\times(\dim \A)$ matrix of the natural trace form on $\A$. 
A slightly more general result (due to Kilmoyer) can be found in \cite{CR:Methods}*{Proposition (9.17)}; see also \cite{Ram:thesisChap1}.

\begin{defn*}Given any (not
necessarily irreducible) finite dimensional $\A$-module $V$, let $\chi^V$ be the \demph{trace
character} of $V$, defined by
\[
  \chi^V(a) = \trace(\varphi^V(a)),
\]
where $\varphi^V : \A \to \End_\Bbbk(V)$ is the representation
corresponding to the $\A$-module $V$. If $[V]=\lambda$ for
$\lambda\in\Irr(\A)$, we write $\chi^\lambda$ in place of
$\chi^{V}$. 
\end{defn*}
Let $\rho = \chi^{\A}$ be the trace character of the left 
regular module; i.e., the character of $\A$ regarded as a
module over itself by left multiplication.  Since
$\End_\Bbbk(V^\lambda) \cong (V^\lambda)^* \otimes V^\lambda$, it
follows from \eqref{eq:ss-dec} that
\begin{equation*} 
  \A \cong \textstyle \bigoplus_\lambda (\dim V^\lambda) V^\lambda 
\end{equation*}
as left $\A$-modules.  Since characters are additive on direct sums of
modules and since $\chi^\lambda(1) = \dim V^\lambda$, it follows that
\begin{equation}\label{eq:rho}
  \rho = \textstyle \sum_\lambda (\dim V^\lambda)\, \chi^\lambda =
  \sum_\lambda \chi^\lambda(1) \chi^\lambda. 
\end{equation}

The problem of finding central idempotents $\idem(\lambda)$ is now framed as follows. Given a fixed basis $B=B(\A)$ of $\A$, write 
\begin{equation}\label{eq:idem-via-basis}
  \idem(\lambda) = \textstyle \sum_{b \in B} c_b^\lambda \, b
\end{equation}
and try to compute the coefficients $c^\lambda_b \in \Bbbk$. To that end, we may multiply both sides of \eqref{eq:idem-via-basis} by
a basis element $b' \in B$, and then apply $\rho$ to both
sides to get
\begin{equation}\label{eq:idem-b'}
  \rho(\idem(\lambda) b') = \textstyle \sum_{b\in B} \rho(bb')\, c_b^\lambda .
\end{equation}
On the other hand, we can use \eqref{eq:rho} to express
$\rho(\idem(\lambda) b')$ as
\begin{equation}\label{eq:idem-b'-alt}
  \rho(\idem(\lambda) b') = \textstyle \sum_\mu \chi^\mu(1)
  \chi^\mu(\idem(\lambda) b') = \chi^\lambda(1) \,\chi^\lambda(b').
\end{equation}
(The last equality in \eqref{eq:idem-b'-alt} comes by multiplying the
equation $1 = \sum_\mu \idem(\mu)$ on the right by $b'$, then applying $\chi^\lambda$
to both sides.) Note that $\chi^{\lambda}(\idem(\mu)b') = 0$ for
$\lambda \ne \mu$, since
$\idem(\mu)b'$ belongs to a block upon which $\chi^\lambda$ acts as zero.

Finally, we combine \eqref{eq:idem-b'} and \eqref{eq:idem-b'-alt} to
obtain
\begin{equation} \label{eq:system}
  \textstyle \sum_{b\in B} \rho(bb')\, c_b^\lambda = \chi^\lambda(1)
  \,\chi^\lambda(b').
\end{equation}
For fixed $\lambda$, we may regard \eqref{eq:system} as a
linear system (one equation for each $b'$) that govern the
values $c^\lambda_b$. This leads to the following result.

\begin{prop}\label{prop:invertible}
Suppose a split semisimple finite dimensional algebra $\A$ has underlying field $\Bbbk$ of characteristic zero.
Then 
the primitive central idempotents $\idem(\lambda)$ of $\A$ are uniquely
determined by its irreducible characters.
\end{prop}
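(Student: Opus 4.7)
The plan is to read \eqref{eq:system} as a linear system in the unknown coefficients $(c_b^\lambda)_{b \in B}$, with coefficient matrix $M := \bigl(\rho(bb')\bigr)_{b,b' \in B}$ and right-hand side $\bigl(\chi^\lambda(1)\chi^\lambda(b')\bigr)_{b' \in B}$. By \eqref{eq:rho}, the regular character $\rho$ is a $\Z_{\geq 0}$-combination of the irreducible characters, so both $M$ and the right-hand side are determined by the data $\{\chi^\lambda\}$. Hence, once I know $M$ is invertible, the coefficients $c_b^\lambda$ are uniquely recoverable from the irreducible characters, and the proposition follows via \eqref{eq:idem-via-basis}.

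The key step is thus to verify that $M$ is invertible, equivalently that the symmetric bilinear form $\beta\colon(x,y) \mapsto \rho(xy)$ on $\A$ is non-degenerate. Using the Wedderburn decomposition \eqref{eq:ss-dec}, write an element of $\A$ as a tuple $(x_\lambda)_\lambda$ with $x_\lambda \in \End_\Bbbk(V^\lambda)$. The left-regular module decomposes as $\A \cong \bigoplus_\lambda (\dim V^\lambda)\, V^\lambda$, so
\[
  \rho(xy) \,=\, \sum_\lambda (\dim V^\lambda)\, \trace_{V^\lambda}(x_\lambda y_\lambda).
\]
On each matrix block $\End_\Bbbk(V^\lambda)$ the pairing $(A,B) \mapsto \trace(AB)$ is non-degenerate (a standard fact for $\text{Mat}_d(\Bbbk)$), and $\beta$ vanishes on cross terms between distinct blocks because $\idem(\lambda)\idem(\mu) = 0$ for $\lambda \ne \mu$. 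Therefore the restriction of $\beta$ to each block is $\dim V^\lambda$ times the non-degenerate trace form, and $\beta$ is non-degenerate globally provided each scalar $\dim V^\lambda$ is nonzero in $\Bbbk$.

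The main obstacle, and the only place the characteristic-zero hypothesis enters, is precisely the non-vanishing of $\dim V^\lambda$ in $\Bbbk$. Since $\operatorname{char} \Bbbk = 0$, every positive integer is invertible in $\Bbbk$, so each $\dim V^\lambda \ne 0$ and the block pairings remain non-degenerate. This forces $M$ to be invertible. Cramer's rule then furnishes, for each $\lambda$ and each $b \in B$, an explicit expression for $c_b^\lambda$ as a ratio of polynomials in the character values $\chi^\mu(1), \chi^\mu(b')$ ($\mu \in \Irr(\A)$, $b' \in B$), making precise the sense in which the $\idem(\lambda)$ are determined by the $\chi^\lambda$. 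In positive characteristic this argument breaks down exactly when some $\dim V^\lambda$ is divisible by $\operatorname{char}\Bbbk$, consistent with the hypothesis being essential.
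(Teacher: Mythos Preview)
Your proof is correct and follows the same approach as the paper: set up the linear system \eqref{eq:system}, identify $M$ as the Gram matrix of the trace form $(x,y)\mapsto\rho(xy)$, and argue that $M$ is invertible so that the $c_b^\lambda$ are uniquely determined. The paper simply cites a reference (Vinberg) for the non-degeneracy of the trace form, whereas you supply the explicit Wedderburn-block argument and pinpoint exactly where characteristic zero enters (the scalars $\dim V^\lambda$ must be units); you also make explicit that $\rho$, and hence $M$, is itself built from the irreducible characters via \eqref{eq:rho}, a point the paper leaves tacit.
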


\begin{proof}
Given a basis $B$ of $\A$, let $M = \left(\rho(bb')\right)_{b'\!,\,b \in B}$ be the square matrix of
coefficients in the linear system \eqref{eq:system}, with rows indexed
by $b'$ and columns by $b$. This is just the matrix of the natural bilinear trace form, 
  i.e., $(a, a') = \rho(aa') \ \forall\,a, a' \in \A,$
with respect to the basis $B$. As $\A$ is split semisimple over a field of characteristic zero, a classical
argument, as in \cite{Vinberg}*{Theorem 11.54}, shows that the trace form is
nondegenerate. Hence $M$ is invertible.

Let $r^\lambda$ be the column vector
$\left(\chi^\lambda(1) \,\chi^\lambda(b')\right)_{b' \in B}$. Then the column vector $\left(c^\lambda_b\right)_{b\in B}$ defining $\idem(\lambda)$ in \eqref{eq:idem-via-basis} is uniquely determined and equal to $M^{-1} r^\lambda$. 
\end{proof}

We note that the vector $r^\lambda$ in the proof of Proposition \ref{prop:invertible}
is just the $\lambda$-row of the
character table of $\A$, scaled by $\chi^\lambda(1) = \dim_\Bbbk
V^\lambda$. So we have an alternative method of producing the irreducible characters, provided this table is known. See, e.g., \cite{Ram-chars} for the case of Brauer algebras.

In the case of group algebras, Proposition \ref{prop:invertible}
recovers the classical formula of Frobenius (see
\cite{Frobenius}*{III, pp.~ 244--274}). In that case, the matrix $M$
of the natural trace form is easy to invert.

\begin{cor}[Frobenius]  \label{cor:Frobenius}
  Suppose that $\A = \Bbbk G$ is a split semisimple group algebra over
  a field $\Bbbk$ of characteristic zero, where $G$ is a finite
  group. Then for any $\lambda \in \Irr(\Bbbk G)$,
  \[
     \idem(\lambda) = \frac{1}{|G|} \sum_{g \in G} \chi^\lambda(1)\,
     \chi^\lambda(g^{-1})\, g\,.
  \]

\end{cor}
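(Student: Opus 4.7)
The plan is to specialize Proposition \ref{prop:invertible} to the case $\A=\Bbbk G$ with the natural basis $B=G$, and then compute the inverse of the trace form matrix explicitly.

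First I would record the well-known formula for the character $\rho$ of the regular representation of $G$: namely, $\rho(h) = |G|$ if $h = 1_G$ and $\rho(h) = 0$ otherwise. With $B=G$, this means
\[
M_{g',\,g} \,=\, \rho(gg') \,=\, |G|\,\delta_{gg',\,1_G} \,=\, |G|\,\delta_{g',\,g^{-1}},
\]
so $M = |G|\,P$, where $P$ is the permutation matrix on $G$ corresponding to the inversion map $g\mapsto g^{-1}$.

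Next, since inversion is an involution, $P^2 = I$, and consequently $M^{-1} = \tfrac{1}{|G|}P$; explicitly, $(M^{-1})_{g,\,g'} = \tfrac{1}{|G|}\,\delta_{g,\,{g'}^{-1}}$. Then by Proposition \ref{prop:invertible}, the coefficients of $\idem(\lambda)$ in the basis $B=G$ are given by $\bigl(c^\lambda_g\bigr)_{g\in G} = M^{-1} r^\lambda$, which evaluates to
\[
c^\lambda_g \,=\, \sum_{g'\in G} (M^{-1})_{g,\,g'}\,\chi^\lambda(1)\,\chi^\lambda(g') \,=\, \frac{1}{|G|}\,\chi^\lambda(1)\,\chi^\lambda(g^{-1}).
\]
Substituting into $\idem(\lambda) = \sum_{g\in G} c^\lambda_g \, g$ yields the desired formula.

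There is no real obstacle here, as both ingredients (the formula for $\rho$ on group elements and the involutive nature of inversion) are entirely elementary. The only subtlety worth flagging is the row/column convention in the definition of $M$, which must be respected so that the inverse permutation appears on the right hand side as $\chi^\lambda(g^{-1})$ rather than $\chi^\lambda(g)$.
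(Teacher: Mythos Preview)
Your proof is correct and follows essentially the same approach as the paper: both take $B=G$, compute the matrix $M$ of the trace form using $\rho(gg')=|G|\,\delta_{g',g^{-1}}$, invert it, and read off the coefficients $c^\lambda_g$. The only cosmetic difference is that the paper writes $M^{-1}=\tfrac{1}{|G|}\transpose{P}$ using the general fact that permutation matrices are orthogonal, whereas you use $P^2=I$ directly from the involutive nature of inversion; since inversion is an involution, $P$ is symmetric and the two expressions agree.
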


\begin{proof}
This follows from the observation that $\rho(g)$ is zero for any $g
\ne 1_G$, while $\rho(1_G) = |G|$, where $1_G$ denotes the identity
element of $G$. Indeed, let $B(\A) = G$ be the basis of $\A$ given by the
  group elements. Then the matrix $M = \left(\rho(gg')\right)_{g',\,g \in G}$ in the proof of the proposition is
  $|G|$ times the permutation matrix $P = \left(\delta_{g^{-1},g'}\right)_{g',\,g
    \in G}$, so $M^{-1} = \frac{1}{|G|} \transpose{P}$. 
Then
\[
   \rho(gg') = |G| \, \delta_{g^{-1},\, g'}
\]
in terms of the usual Kronecker delta. The formula for
$\idem(\lambda)$ now follows by an easy calculation.
\end{proof}

\begin{bibdiv}

\begin{biblist}

\bib{Birman-Wenzl}{article}{
   author={Birman, Joan S.},
   author={Wenzl, Hans},
   title={Braids, link polynomials and a new algebra},
   journal={Trans. Amer. Math. Soc.},
   volume={313},
   date={1989},
   number={1},
   pages={249--273},
}

\bib{Brauer}{article}{
 label={Bra},
   author={Brauer, Richard}, 
   title={On algebras which are connected with the
     semisimple continuous groups},
   journal={Ann. of Math. (2)}, 
   volume={38}, 
   date={1937}, 
   number={4},
   pages={857--872},
}


\bib{Brown}{article}{
 label={Bro},
   author={Brown, William P.},
   title={The semisimplicity of $\omega_f^n$},
   journal={Ann. of Math. (2)},
   volume={63},
   date={1956},
   pages={324--335},
}

\bib{CST-symmetric-gps}{book}{
   author={Ceccherini-Silberstein, Tullio},
   author={Scarabotti, Fabio},
   author={Tolli, Filippo},
   title={Representation theory of the symmetric groups},
   series={Cambridge Studies in Advanced Mathematics},
   volume={121},
   note={The Okounkov--Vershik approach, character formulas, and partition
   algebras},
   publisher={Cambridge University Press, Cambridge},
   date={2010},
   pages={xvi+412},
}

\bib{CDM}{article}{
   author={Cox, Anton},
   author={De Visscher, Maud},
   author={Martin, Paul},
   title={The blocks of the Brauer algebra in characteristic zero},
   journal={Represent. Theory},
   volume={13},
   date={2009},
   pages={272--308},
}

\bib{Cox-et-al}{article}{
   author={Cox, Anton},
   author={Martin, Paul},
   author={Parker, Alison},
   author={Xi, Changchang},
   title={Representation theory of towers of recollement: theory, notes, and
   examples},
   journal={J. Algebra},
   volume={302},
   date={2006},
   number={1},
   pages={340--360},
}

\bib{Curtis}{book}{
 label={Cur},
   author={Curtis, Charles W.},
   title={Pioneers of representation theory: Frobenius, Burnside, Schur, and
   Brauer},
   series={History of Mathematics},
   volume={15},
   publisher={American Mathematical Society, Providence, RI; London
   Mathematical Society, London},
   date={1999},
   pages={xvi+287},
}

\bib{Curtis-Reiner}{book}{
   author={Curtis, Charles W.},
   author={Reiner, Irving},
   title={Representation theory of finite groups and associative algebras},
   note={Reprint of the 1962 original},
   publisher={AMS Chelsea Publishing, Providence, RI},
   date={2006},
   pages={xiv+689},
}

\bib{CR:Methods}{book}{
   author={Curtis, Charles W.},
   author={Reiner, Irving},
   title={Methods of representation theory. Vol. I},
   note={With applications to finite groups and orders;
   Pure and Applied Mathematics;
   A Wiley-Interscience Publication},
   publisher={John Wiley \& Sons, Inc., New York},
   date={1981},
   pages={xxi+819},
}

\bib{Diaconis-Greene}{misc}{ 
    author={Diaconis, Persi},
    author={Greene, Curtis},
    title={Applications of Murphy's elements},
    date={1998},
    note = {\href{http://statweb.stanford.edu/~cgates/PERSI/papers/EFSNSF335.pdf}{\texttt{\small http://statweb.stanford.edu/{\textasciitilde}cgates/PERSI/papers/EFSNSF335.pdf}}. 
    		Accessed 2014. 
		Technical Report No.~335, Department of Statistics, Stanford University},
}

\bib{Dipper-James}{article}{
   author={Dipper, Richard},
   author={James, Gordon},
   title={Blocks and idempotents of Hecke algebras of general linear groups},
   journal={Proc. London Math. Soc. (3)},
   volume={54},
   date={1987},
   number={1},
   pages={57--82},
}


\bib{Doran-et-al}{article}{
   author={Doran, William F., IV},
   author={Wales, David B.},
   author={Hanlon, Philip J.},
   title={On the semisimplicity of the Brauer centralizer algebras},
   journal={J. Algebra},
   volume={211},
   date={1999},
   number={2},
   pages={647--685},
}		

\bib{Fishel-Grojnowski}{article}{
   author={Fishel, S.},
   author={Grojnowski, I.},
   title={Canonical bases for the Brauer centralizer algebra},
   journal={Math. Res. Lett.},
   volume={2},
   date={1995},
   number={1},
   pages={15--26},
}

\bib{Frobenius}{book}{
 label={Fro},
   author={Frobenius, Ferdinand Georg},
   title={Gesammelte Abhandlungen. B\"ande I, II, III},
   language={German},
   series={Herausgegeben von J.-P. Serre},
   publisher={Springer-Verlag, Berlin-New York},
   date={1968},
   pages={Vol. I: vii+650 pp. (1 plate); Vol. II: ii+733 pp.; Vol. III:
   iv+740},
}

\bib{Fulton}{book}{
 label={Ful},
   author={Fulton, William},
   title={Young tableaux},
   series={London Mathematical Society Student Texts},
   volume={35},
   publisher={Cambridge University Press, Cambridge},
   date={1997},
   pages={x+260},
}

\bib{Garsia}{misc}{ 
  label={Gar},
    author={Garsia, Adriano}, 
    title={Young's seminormal representation, Murphy elements, 
    and content evaluations},
    date={2003},
    note={\href{http://www.math.ucsd.edu/~garsia/somepapers/Youngseminormal.pdf}{\texttt{\small http://www.math.ucsd.edu/{\textasciitilde}garsia/somepapers/}}
      \href{http://www.math.ucsd.edu/~garsia/somepapers/Youngseminormal.pdf}{\texttt{\small Youngseminormal.pdf}}. 
    	Accessed 2014. 
	Course Lecture Notes},
}
    
\bib{GT1}{article}{
   author={Gelfand, I.~M.},
   author={Tsetlin, M.~L.},
   title={Finite-dimensional representations of the group of unimodular matrices}, 
   journal={Dokl. Akad. Nauk SSSR (N.S.)}, 
   volume={71},
   date={1950}, 
   pages={825--828},
   language={Russian},
   translation={
      title={\upshape I.~M. Gelfand, {\itshape Collected Papers, Vol. II}},
      journal={Springer-Verlag, Berlin},
      date={1987},
      pages={653--656},
   },
}
    
\bib{GT2}{article}{
   author={Gelfand, I.~M.},
   author={Tsetlin, M.~L.},
   title={Finite-dimensional representations of groups of orthogonal matrices}, 
   journal={Dokl. Akad. Nauk SSSR (N.S.)}, 
   volume={71},
   date={1950}, 
   pages={1017--1020},
   language={Russian},
   translation={
      title={\upshape I.~M. Gelfand, {\itshape Collected Papers, Vol. II}},
      journal={Springer-Verlag, Berlin},
      date={1987},
      pages={657--661},
   },
}

\bib{Giaquinto}{misc}{
  label={Gia},
   author={Giaquinto, Anthony},
   title={Orthogonal bases of irreducible symmetric group representations},
   note={Loyola University Chicago Algebra and Combinatorics Seminar, 
    26th February 2014},
   date={2014},
}

\bib{Goodman-delaHarpe-Jones}{book}{
   author={Goodman, Frederick M.},
   author={de la Harpe, Pierre},
   author={Jones, Vaughan F. R.},
   title={Coxeter graphs and towers of algebras},
   series={Mathematical Sciences Research Institute Publications},
   volume={14},
   publisher={Springer-Verlag, New York},
   date={1989},
   pages={x+288},
}

\bib{Goodman-Graber:JBC}{article}{
   author={Goodman, Frederick M.},
   author={Graber, John},
   title={Cellularity and the Jones basic construction},
   journal={Adv. in Appl. Math.},
   volume={46},
   date={2011},
   number={1-4},
   pages={312--362},
}

\bib{Goodman-Graber:JM}{article}{
   author={Goodman, Frederick M.},
   author={Graber, John},
   title={On cellular algebras with Jucys Murphy elements},
   journal={J. Algebra},
   volume={330},
   date={2011},
   pages={147--176},
}

\bib{Graham-Lehrer}{article}{
   author={Graham, J.~J.},
   author={Lehrer, G.~I.},
   title={Cellular algebras},
   journal={Invent. Math.},
   volume={123},
   date={1996},
   number={1},
   pages={1--34},
}

\bib{Green}{book}{
  label={Gre}
   author={Green, James A.},
   title={Polynomial representations of ${\rm GL}_{n}$},
   series={Lecture Notes in Mathematics},
   volume={830},
   publisher={Springer-Verlag, Berlin-New York},
   date={1980},
   pages={vi+118},
   NOTE={Second ed., 2007}
}


\bib{Halverson-Ram-chars}{article}{
   author={Halverson, Tom},
   author={Ram, Arun},
   title={Characters of algebras containing a Jones basic construction: the
   Temperley-Lieb, Okada, Brauer, and Birman-Wenzl algebras},
   journal={Adv. Math.},
   volume={116},
   date={1995},
   number={2},
   pages={263--321},
}


\bib{Hanlon-Wales:90}{article}{
   author={Hanlon, Phil},
   author={Wales, David},
   title={Computing the discriminants of Brauer's centralizer algebras},
   journal={Math. Comp.},
   volume={54},
   date={1990},
   number={190},
   pages={771--796},
}


\bib{Jucys}{article}{
 label={Juc},
   author = {Jucys, A.-A.~A.},
   title = {Symmetric polynomials and the center of the symmetric group ring},
   journal = {Rep. Mathematical Phys.},
   volume = {5},
   date = {1974},
   number = {1},
   pages = {107--112},
}

\bib{Kauffman}{article}{
 label={Kau},
   author={Kauffman, Louis H.},
   title={An invariant of regular isotopy},
   journal={Trans. Amer. Math. Soc.},
   volume={318},
   date={1990},
   number={2},
   pages={417--471},
}

\bib{King-Martin-Parker}{misc}{ 
    author={King, Oliver},
    author={Martin, Paul},
    author={Parker, Alison},
    title={On central idempotents in the Brauer algebra},
    date={2016},
    note = {Preprint \href{https://arxiv.org/abs/1609.01183}{\texttt{\small http://arxiv.org/abs/1609.01183}}},
}


\bib{Lascoux}{article}{
 label={Las},
   author={Lascoux, A.},
   title={Young's representations of the symmetric group},
   book={
       title={Symmetry and structural properties of condensed matter},
       publisher={World Scientific Publishing Co., Inc., River Edge, NJ},
       editor={Lulek, T.},
       editor={Lulek, B.},
       editor={Wal, A.},
       },
   conference={
       title={Proceedings of the 6th International School on Theoretical
       Physics, held in Myczkowce, Poland, 31 August -- 6 September 2000},
       },
   date={2001},
   pages={94--104},
}

\bib{Leduc-Ram}{article}{
   author={Leduc, Robert},
   author={Ram, Arun},
   title={A ribbon Hopf algebra approach to the irreducible representations
   of centralizer algebras: the Brauer, Birman-Wenzl, and type A
   Iwahori-Hecke algebras},
   journal={Adv. Math.},
   volume={125},
   date={1997},
   number={1},
   pages={1--94},
}

\bib{Martin:96}{article}{
 label={Mar},
   author={Martin, Paul},
   title={The structure of the partition algebras},
   journal={J. Algebra},
   volume={183},
   date={1996},
   number={2},
   pages={319--358},
}

\bib{Martin-Ryom-Hansen}{article}{
   author={Martin, P.~P.},
   author={Ryom-Hansen, S.},
   title={Virtual algebraic Lie theory: tilting modules and Ringel duals for
   blob algebras},
   journal={Proc. London Math. Soc. (3)},
   volume={89},
   date={2004},
   number={3},
   pages={655--675},
}

\bib{Martin-Saluer}{article}{
   author={Martin, Paul},
   author={Saleur, Hubert},
   title={The blob algebra and the periodic Temperley-Lieb algebra},
   journal={Lett. Math. Phys.},
   volume={30},
   date={1994},
   number={3},
   pages={189--206},
}

\bib{Mathas}{article}{
 label={Mat},
   author={Mathas, Andrew},
   title={Seminormal forms and Gram determinants for cellular algebras},
   note={With an appendix by Marcos Soriano},
   journal={J. Reine Angew. Math.},
   volume={619},
   date={2008},
   pages={141--173},
}

\bib{Murphy-81}{article}{
 label={Mur},
   author={Murphy, G.~E.},
   title={A new construction of Young's seminormal representation of the
   symmetric groups},
   journal={J. Algebra},
   volume={69},
   date={1981},
   number={2},
   pages={287--297},
}


\bib{Nazarov}{article}{
 label={Naz},
   author={Nazarov, Maxim},
   title={Young's orthogonal form for Brauer's centralizer algebra},
   journal={J. Algebra},
   volume={182},
   date={1996},
   number={3},
   pages={664--693},
}

\bib{OP}{misc}{ 
   author={Ogievetsky, O.~V.},
   author={Pyatov, P. N.},
   title={Lecture on Hecke algebras},
   institution={Max-Planck Inst. Math.},
   address={Bonn},
   date={2001},
   note={\href{http://cds.cern.ch/record/735174}{\texttt{\small http://cds.cern.ch/record/735174}}. 
	Accessed 2016. 
	Based on lectures presented at the International School
	``Symmetries and Integrable Systems'', Dubna, 8--11 June 1999},
}

\bib{OV}{article}{
   author={Okounkov, Andrei},
   author={Vershik, Anatoly},
   title={A new approach to representation theory of symmetric groups},
   journal={Selecta Math. (N.S.)},
   volume={2},
   date={1996},
   number={4},
   pages={581--605},
}

\bib{Ram:thesisChap1}{misc}{ 
  label={Ram1},
  author={Ram, Arun}, title={Dissertation, Chapter 1: Representation
    Theory (version July 4, 1990)}, date={1990},
  note={\href{http://www.ms.unimelb.edu.au/~ram/Preprints/dissertationChapt1.pdf}{\texttt{\small http://www.ms.unimelb.edu.au/{\textasciitilde}ram/Preprints/}}
\href{http://www.ms.unimelb.edu.au/~ram/Preprints/dissertationChapt1.pdf}{\texttt{\small dissertationChapt1.pdf}}. Accessed
    2016.}, 
}

\bib{Ram-chars}{article}{
  label={Ram2},
   author={Ram, Arun},
   title={Characters of Brauer's centralizer algebras},
   journal={Pacific J. Math.},
   volume={169},
   date={1995},
   number={1},
   pages={173--200},
}

\bib{Ram-PLMS}{article}{
  label={Ram3},
   author={Ram, Arun},
   title={Seminormal representations of Weyl groups and Iwahori-Hecke
   algebras},
   journal={Proc. London Math. Soc. (3)},
   volume={75},
   date={1997},
   number={1},
   pages={99--133},
}

\bib{Ram-second-chars}{article}{
  label={Ram4},
   author={Ram, Arun},
   title={A ``second orthogonality relation'' for characters of Brauer
   algebras},
   journal={European J. Combin.},
   volume={18},
   date={1997},
   number={6},
   pages={685--706},
}

\bib{Ram-Wenzl}{article}{
   author={Ram, Arun},
   author={Wenzl, Hans},
   title={Matrix units for centralizer algebras},
   journal={J. Algebra},
   volume={145},
   date={1992},
   number={2},
   pages={378--395},
}

\bib{Rui}{article}{
 label={Rui},
   author={Rui, Hebing},
   title={A criterion on the semisimple Brauer algebras},
   journal={J. Combin. Theory Ser. A},
   volume={111},
   date={2005},
   number={1},
   pages={78--88},
}

\bib{Schur-27}{article}{
  label={Sch},
author={Schur, Issai},
title={\"{U}ber die rationalen Darstellungen der allgemeinen 
  linearen Gruppe},
journal={Sitzber. K\"{o}nigl. Preuss. Ak. Wiss., Physikal.-Math. Klasse 1927},
pages={58--75},
book={
   title={Gesammelte Abhandlungen. Band III},
   language={German},
   note={Herausgegeben von Alfred Brauer und Hans Rohrbach},
   publisher={Springer-Verlag, Berlin-New York},
   date={1973},
},
}

\bib{Simon}{book}{
 label={Sim},
   author={Simon, Barry},
   title={Representations of finite and compact groups},
   series={Graduate Studies in Mathematics},
   volume={10},
   publisher={American Mathematical Society, Providence, RI},
   date={1996},
   pages={xii+266},
}

\bib{Stembridge}{article}{
 label={Ste},
   author={Stembridge, John R.},
   title={Orthogonal sets of Young symmetrizers},
   journal={Adv. in Appl. Math.},
   volume={46},
   date={2011},
   number={1-4},
   pages={576--582},
}

\bib{Thrall}{article}{
 label={Thr},
   author={Thrall, R.~M.},
   title={Young's semi-normal representation of the symmetric group},
   journal={Duke Math. J.},
   volume={8},
   date={1941},
   pages={611--624},
}

\bib{VO}{article}{
   author={Vershik, A.~M.},
   author={Okun{\cprime}kov, A. Yu.},
   title={A new approach to representation theory of symmetric groups. II},
   language={Russian, with English and Russian summaries},
   journal={Zap. Nauchn. Sem. S.-Peterburg. Otdel. Mat. Inst. Steklov.
   (POMI)},
   volume={307},
   date={2004},
   number={Teor. Predst. Din. Sist. Komb. i Algoritm. Metody. 10},
   pages={57--98, 281},
   issn={0373-2703},
   translation={
      journal={J. Math. Sci. (N. Y.)},
      volume={131},
      date={2005},
      number={2},
      pages={5471--5494},
      issn={1072-3374},
   },
}


\bib{Vinberg}{book}{
 label={Vin},
   author={Vinberg, E.~B.},
   title={A course in algebra},
   series={Graduate Studies in Mathematics},
   volume={56},
   note={Translated from the 2001 Russian original by Alexander Retakh},
   publisher={American Mathematical Society, Providence, RI},
   date={2003},
   pages={x+511},
   isbn={0-8218-3318-9},
}

\bib{Wenzl}{article}{
 label={Wen},
   author={Wenzl, Hans},
   title={On the structure of Brauer's centralizer algebras},
   journal={Ann. of Math. (2)},
   volume={128},
   date={1988},
   number={1},
   pages={173--193},
}

\bib{Xi}{article}{
 label={Xi},
   author={Xi, Changchang},
   title={On the quasi-heredity of Birman--Wenzl algebras},
   journal={Adv. Math.},
   volume={154},
   date={2000},
   number={2},
   pages={280--298},
}

\bib{Young}{article}{
 label={You},
   author={Young, Alfred},
   title={On quantitative substitutional analysis. I--IX},
   journal={Proc. London Math. Soc. (1901--1952)},
}

\end{biblist}
\end{bibdiv}
\end{document}